\newtheorem{lemma}{Lemma}[section]
\newtheorem{theorem}{Theorem}[section]
\newtheorem{proposition}{Proposition}[section]
\newtheorem{remark}{Remark}[section]
\numberwithin{equation}{section}
\newcommand{\dis}{\displaystyle}
\newcommand{\R}{\mathbb{R}}
\renewcommand{\S}{\mathbb{S}}
\newcommand{\CB}{\mathcal{B}}
\newcommand{\CE}{\mathcal{E}}
\newcommand{\CK}{\mathcal{K}}
\newcommand{\CL}{\mathcal{L}}
\newcommand{\ep}{\epsilon}
\newcommand{\na}{\nabla}
\newcommand{\ga}{\gamma}
\newcommand{\om}{\omega}
\newcommand{\la}{\lambda}
\newcommand{\pa}{\partial}
\newcommand{\eps}{\epsilon}
\newcommand{\vertiii}[1]{{\left\vert\kern-0.25ex\left\vert\kern-0.25ex\left\vert #1
    \right\vert\kern-0.25ex\right\vert\kern-0.25ex\right\vert}}
\begin{document}
\title[Solution with polynomial tail for the Boltzmann equation]{Global mild solution with polynomial tail for the Boltzmann equation in the whole space}

\author[R.-J. Duan]{Renjun Duan}
\address[RJD]{Department of Mathematics, The Chinese University of Hong Kong,
	Shatin, Hong Kong, P.R.~China}
\email{rjduan@math.cuhk.edu.hk}

\author[Z.-G. Li]{Zongguang Li}
	\address[ZGL]{Department of Applied Mathematics, The Hong Kong Polytechnic University, Hung Hom, Hong Kong, P.R.~China}
	\email{zongguang.li@polyu.edu.hk}

\author[S.-Q. Liu]{Shuangqian Liu}
\address[SQL]{School of Mathematics and Statistics and Hubei Key Laboratory of Mathematical Sciences, Central China Normal University, Wuhan 430079, P.R.~China}
\email{sqliu@ccnu.edu.cn}
	
\begin{abstract}
We are concerned with the Cauchy problem on the Boltzmann equation in the whole space. The goal is to construct global-in-time bounded mild solutions near Maxwellians with the perturbation admitting a polynomial tail in large velocities. The full range of both hard and soft potentials with cutoff can be covered. The proof is based on a decomposition of the Boltzmann equation motivated by Caflisch that can capture the time-decay property of solutions  in the way that the slow velocity-decay part decays in time much faster than the fast velocity-decay part. This provides the first result on the construction of slow-decaying solutions when the spatial domain is unbounded.
\end{abstract}

	%\date{\today}
	
	\subjclass[2020]{35Q20, 35B35}
	
	%35Q20  	Boltzmann equations
	%35B35  	Stability in context of PDEs

	\keywords{Boltzmann equation, mild solutions, global existence, polynomial velocity tail}
	\maketitle
	\thispagestyle{empty}
	
	\tableofcontents

\section{Introduction}
In the paper, we consider the following Cauchy problem on the spatially inhomogeneous Boltzmann equation in the whole space
\begin{eqnarray}\label{BE}
	&\dis \pa_tF+v\cdot \na_x F=Q(F,F),   \quad &\dis F(0,x,v)=F_0(x,v),
\end{eqnarray}
where the unknown $F(t,x,v)\geq0$ stands for the density distribution function of rarefied gas particles with velocity $v\in \R^3$ at time $t> 0$ and position $x\in \R^3$, and initial data $F_0(x,v)\geq 0$ is given. The Boltzmann collision operator $Q(\cdot,\cdot)$ is bilinear and acts only on velocity variable, given by
\begin{align}\label{defQ}
	Q(G,F)(v)&=\int_{\R^3}\int_{\S^2}B(v-u,\theta)\left[ G(u')F(v')-G(u)F(v)\right]d\omega du\notag\\&:=Q_+(G,F)(v)-Q_-(G,F)(v),
\end{align}
where the post-collision velocities $v'$ and $u'$ satisfy
\begin{align}%\label{velocity}
	\begin{split}
		v'=v-\left[(v-u)\cdot \omega \right]\omega, \quad &u'=u+\left[(v-u)\cdot \omega \right]\omega,\notag
%		u'+v'=u+v,\quad |u'|^2&+|v'|^2=|u|^2+|v|^2.
	\end{split}
\end{align}
with $\omega\in \S^2$, and we have denoted $G(u)=G(t,x,u)$ and likewise for $G(u')$ and $F(v')$. Moreover, $B(v-u,\theta)$ is the Boltzmann collision kernel depending only on $|v-u|$ and $\cos \theta=\omega\cdot(v-u)/|v-u|$.
To the end, we suppose the Grad's angular cutoff assumption
\begin{equation}\label{ass.ck}
	B(v-u,\theta)=|v-u|^\gamma b(\theta),
\end{equation}
with
\begin{equation}
\label{b-bdd}
-3<\ga\leq1,\quad 0\leq b(\theta) \leq C|\cos \theta|
\end{equation}
for some positive constant $C$. Through the paper, for simplicity we call hard potentials in case of $0\leq \gamma\leq 1$ and soft potentials in case of $-3<\gamma<0$.

We set the normalized global Maxwellian $\mu$ as
$$
\mu=\mu(v):=(2\pi)^{-3/2}\exp \left( -|v|^2/2 \right).
$$
Under the perturbation near the Maxwellian, we look for solutions in the form of
\begin{align}\label{pert}
	F=\mu+g
\end{align}
for a new unknown function $g=g(t,x,v)$. Substituting \eqref{pert} into \eqref{BE}, we can reformulate the Cauchy problem on the Boltzmann equation in terms of $g$ as
\begin{eqnarray}\label{rbe}
	&\pa_t g+v\cdot \nabla_x g +\mathcal{L} g=Q(g,g),   \quad &\dis g(0,x,v)=g_0(x,v):=F_0(x,v)-\mu(v),
\end{eqnarray}
where $\mathcal{L}$ is the linearized collision operator denoted by
\begin{equation*}
	\mathcal{L}g=- [Q(\mu,g)+Q(g,\mu)].
\end{equation*}
Moreover, $\mathcal{L}$ can be split into
\begin{equation*}
	\CL=\nu -\CK,
\end{equation*}
with
\begin{align}\label{Defnu}
\nu(v)=\int_{\R^3}\int_{\S^2}B(v-u,\theta)\mu(u)d\omega du\sim (1+|v|)^\ga,
\end{align}
and
\begin{align}\label{DefCK}
\CK f(v)=\int_{\R^3}\int_{\S^2}B(v-u,\theta)\left[ \mu(u')f(v')+\mu(v')f(u')-\mu(v)f(u) \right]\,d\omega du.
\end{align}
By integrating along the backward trajectory in \eqref{rbe}, we obtain the mild form
\begin{align}\label{mild}
	\dis g(t,x,v)=&e^{-\nu(v)t}g_0(x-vt,v)+\int_0^t e^{-\nu(v)(t-s)}(\CK g)(s,x-v(t-s),v)\,ds \notag\\
	&+\int_0^t e^{-\nu(v)(t-s)}Q(g,g)(s,x-v(t-s),v)\,ds.
\end{align}

Before stating our main results, we introduce the following notations on norms. Given a function $f=f(x,v)$, the $L^q_vL^p_x$ norm for $1\leq p,q \leq \infty$ is defined by
\begin{align*}
	\|f\|_{L^q_vL^p_x}:= \left\{\int_{\R^3}\left(\int_{\R^3} |f(x,v)|^p dx\right)^\frac{q}{p}dv\right\}^\frac{1}{q},
\end{align*}
with the usual convention for the $L^\infty$ norm in case $p=\infty$ or $q=\infty$.
For brevity, we write $\|f\|_{L^p_{x,v}}=\|f\|_{L^p_vL^p_x}$ for $1\leq p\leq \infty$, in particular, $\|f\|_{L^\infty_{x,v}}=\|f\|_{L^\infty_vL^\infty_x}$ and $\|f\|_{L^2_{x,v}}=\|f\|_{L^2_vL^2_x}$.
Furthermore, we define a polynominal velocity weight function
\begin{align*}%\label{w}
	 w_k=w_k(v)=(1+|v|)^k.
\end{align*}
For a function $f=f(x,v)\in L^\infty_{v} (L^\infty_x\cap L^1_x)$, the velocity weighted $X_{j,k}$ norm is defined as
\begin{align}\label{defX}
	\|f\|_{X_{j,k}}=\|w_kf\|_{L^\infty_{x,v}}+\|w_jf\|_{L^\infty_vL^2_x}+\|w_jf\|_{L^\infty_vL^1_x}.
\end{align}
Similarly, for a function $f=f(x,v)\in L^\infty_v (L^\infty_x\cap L^2_x)$, the velocity weighted $Y_{j,k}$ norm is defined as
\begin{align}\label{defY}
	\|f\|_{Y_{j,k}}=\|w_kf\|_{L^\infty_{x,v}}+\|w_jf\|_{L^\infty_vL^2_x}.
\end{align}

With the above preparations, the main results of the paper are presented as follows. First, we are concerned with the case of hard potentials. We  assume that the initial perturbation is additionally in $L^\infty_vL^1_x$ compared to the previous results to be discussed later.

\begin{theorem}[Hard potentials]\label{hard}
	Let $0\leq\gamma\leq 1$, then there is $k_0>0$ large enough such that for any $k\geq j\geq k_0$, the following holds true.  There are $\ep_0>0$ and $C>0$ such that  if it holds that $F_0(x,v)=\mu(v)+g_0(x,v)\geq 0$ with $g_0\in L^\infty_{v} (L^\infty_x\cap L^1_x)$ satisfying
	\begin{align}\label{smallnesshard}
		\|g_0\|_{X_{j,k}}\leq \ep_0,
	\end{align}
then the Cauchy problem on the Boltzmann equation \eqref{BE} or \eqref{rbe} admits a unique global mild solution $F(t,x,v)=\mu(v)+g(t,x,v)\geq 0$ in the sense of \eqref{mild} with $g\in L^\infty (0,\infty;L^\infty_v (L^\infty_x\cap L^2_x))$ satisfying the estimate
	\begin{align}\label{HE}
		\|g(t)\|_{Y_{j,k}}\leq C(1+t)^{-\frac{3}{4}}	\|g_0\|_{X_{j,k}},
	\end{align}
for all $t\geq0$.
%where $C_{j,k}$ is a constant which depends only on $j$ and $k$.
\end{theorem}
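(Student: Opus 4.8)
The plan is to set up a nonlinear iteration scheme based on the Caflisch-type decomposition $g = g_1 + \sqrt{\mu}\, g_2$, where the first component carries the polynomial-tail, $L^\infty$-type information and the second component is a smooth (Gaussian-weighted) perturbation amenable to the classical $L^2$-energy and $L^2$--$L^\infty$ machinery of Guo. I would define the solution map on the Banach space $Z_T$ of functions with finite norm $\sup_{0\le t\le T}(1+t)^{3/4}\|g(t)\|_{Y_{j,k}}$ (or a closely related weighted-in-time norm), and prove it is a contraction on a small ball for data satisfying \eqref{smallnesshard}, which simultaneously gives existence, uniqueness, and the decay estimate \eqref{HE}. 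Positivity $F=\mu+g\ge 0$ is handled separately, typically by running the same scheme on an approximate sequence $F^{n+1}$ defined through the gain/loss split of the mild form \eqref{mild} so that each iterate is manifestly nonnegative, then passing to the limit.

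\textbf{Key steps.} First I would record the linear estimates: for the mild semigroup generated by $\pa_t + v\cdot\na_x + \nu$ one has pointwise decay from transport plus the damping $e^{-\nu(v)(t-s)}$, and the crucial gain comes from iterating the $\CK$-term via Duhamel (the double Duhamel / "two-hit" argument) to trade $L^\infty$ bounds against an $L^2_{x,v}$ quantity of $g_2$. Second, I would establish the $L^2$ decay: writing the equation for the Gaussian part, use the macro-micro decomposition and the spectral/energy estimates for $\CL$ on $\R^3_x$ together with the linearized-Boltzmann dispersive estimate to get $\|g_2(t)\|_{L^2_{x,v}}\lesssim (1+t)^{-3/4}(\|g_0\|_{L^1_xL^2_v}+\|g_0\|_{L^2_{x,v}})$ — this is exactly where the extra $L^\infty_vL^1_x$ hypothesis on $g_0$ enters and why the rate is $3/4$ rather than exponential. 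Third, I would close the $L^\infty$ weighted estimate: along characteristics, $\|w_k g(t)\|_{L^\infty_{x,v}}$ is controlled by the data term $e^{-\nu t}\|w_k g_0\|_\infty$ (which decays since $\nu\ge c>0$ for hard potentials), plus $\int_0^t e^{-\nu(v)(t-s)}\|w_k \CK g(s)\|_\infty\,ds$ handled by the Duhamel-iteration reduction to the $L^2$ bound from Step 2, plus the nonlinear term $\int_0^t e^{-\nu(v)(t-s)}\|w_k Q(g,g)(s)\|_\infty\,ds$ estimated using $|w_k Q(g,g)|\lesssim \nu(v)\|w_k g\|_\infty\,\|w_{k'} g\|_\infty$-type bounds; the $L^1_x$ and $L^2_x$ pieces of the $Y_{j,k}$ norm are propagated similarly, using that $Q$ and $\CK$ behave well under $L^1_x$, $L^2_x$ convolution-in-$x$ along the flow. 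Fourth, combine the three ingredients via a Gronwall / bootstrap argument on the time-weighted norm, absorbing the nonlinear contributions by smallness of $\ep_0$, and verify the time-decay weight $(1+t)^{-3/4}$ is consistent (the delicate point being that the convolution $\int_0^t (1+t-s)^{-3/4}(1+s)^{-3/2}\,ds \lesssim (1+t)^{-3/4}$ for the nonlinear term, so one may need the quadratic term to decay faster, achieved by splitting $g=g_1+\sqrt\mu g_2$ and noting $g_1$ itself decays via the damping while $g_2$ contributes the $L^2$ rate).

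\textbf{Main obstacle.} The hard part will be reconciling the slow polynomial time decay $(1+t)^{-3/4}$ coming from the dispersion on $\R^3_x$ with the $L^2$--$L^\infty$ interplay, which in the torus case relies on the \emph{exponential} gap $e^{-\lambda t}$ to absorb the lower-order terms generated at each Duhamel iteration. Here every iterate of $\CK$ produces a time integral $\int_0^t e^{-\nu(t-s)}(\cdots)$ against something decaying only polynomially, and one must carefully track that the nonlocal-in-velocity operator $\CK$ does not destroy the $L^1_x$-to-$L^\infty_x$ smoothing rates; in particular the bookkeeping of which negative power of $(1+t)$ is available at each stage, and ensuring the closing inequality is self-consistent rather than losing a power, is the technical crux. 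A secondary difficulty is handling the full soft-potential-compatible range at the level where $\nu(v)\sim(1+|v|)^\gamma$ can degenerate — but in this hard-potential theorem $\nu\gtrsim 1$, so that issue is deferred; here the real subtlety is purely the whole-space decay structure and its interaction with the Caflisch decomposition and the positivity-preserving iteration.
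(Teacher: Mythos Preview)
Your proposal follows the same overall architecture as the paper --- Caflisch decomposition $g=g_1+\sqrt{\mu}\,g_2$, classical $L^2$ decay $(1+t)^{-3/4}$ for the Gaussian component, the $L^2$--$L^\infty$ interplay, and the time-convolution bookkeeping $\int_0^t(1+t-s)^{-3/4}(1+s)^{-3/2}\,ds\lesssim(1+t)^{-3/4}$ --- and you have correctly identified the main obstacle. However, one concrete mechanism is missing from your sketch, and without it the $g_1$ estimate does not close.

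You assert that ``$g_1$ itself decays via the damping,'' but you have not said what equation $g_1$ actually satisfies. If $g_1$ carries the full $\CK g_1$ term, then writing the mild form along characteristics produces $\int_0^t e^{-\nu(t-s)}\|w_k\CK g_1(s)\|_\infty\,ds$, and $\CK$ is \emph{not} small in the weighted $L^\infty$ norm, so this term cannot be absorbed and you cannot obtain the fast decay $(1+t)^{-3/2}$ for $g_1$ that the convolution argument needs. The paper's device is to split $\CK=\CK_s+\sqrt{\mu}\,\CK_b$ with $\CK_s=\chi_{\{|v|\ge M\}}\CK$ and $\CK_b=\chi_{\{|v|<M\}}\mu^{-1/2}\CK$, and to put \emph{only} $\CK_s$ into the $g_1$ equation while routing $\CK_b g_1$ as a source in the $g_2$ equation. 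The point is that $\CK_s$ has small operator norm (of order $k^{-(\gamma+3)/4}+C_k/M^2$, made $<\tfrac12$ by choosing $k\ge k_0$ and $M=M(k)$ large), so the $g_1$ equation becomes a genuine small perturbation of transport-plus-damping and one obtains $\|w_k g_1(t)\|\lesssim(1+t)^{-3/2}$ in $L^\infty_{x,v}$, $L^\infty_vL^2_x$, and $L^\infty_vL^1_x$. Meanwhile $\CK_b$ is bounded (velocity-compactly supported), so $\CK_b g_1$ inherits the $(1+t)^{-3/2}$ decay and feeds the Duhamel integral for $g_2$ with the fast rate you need. This splitting is the step that turns your outline into a proof.

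One small correction: the $Y_{j,k}$ norm contains only $L^\infty_{x,v}$ and $L^\infty_vL^2_x$ pieces, not $L^1_x$. The $L^\infty_vL^1_x$ control appears only for $g_1$, internally, because the $L^2$ decay of the symmetric semigroup (applied to $g_2$) requires an $L^2_vL^1_x$ bound on its source $\CK_b g_1$.
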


In case of soft potentials, we also obtain a similar result for initial data with extra velocity weights.

\begin{theorem}[Soft potentials]\label{soft}
	Let $-3<\gamma<0$ and $0<\ep\leq\frac{1}{2}$, then there is $k_0>0$ large enough such that for any $k\geq j\geq k_0$, the following holds true. There are $\ep_0$ and $C>0$ such that if $F_0(x,v)=\mu(v)+g_0(x,v)\geq 0$ with $g_0\in L^\infty_{v} (L^\infty_x\cap L^1_x)$ satisfying
	\begin{align}\label{smallnesssoft}
		\|g_0\|_{X_{j+|\ga|,k+|\ga|}}\leq \ep_0,
	\end{align}
then the Cauchy problem on the Boltzmann equation \eqref{BE} or \eqref{rbe} admits a unique global mild solution $F(t,x,v)=\mu(v)+g(t,x,v)\geq 0$ in the sense of \eqref{mild} with $g\in L^\infty (0,\infty;L^\infty_v (L^\infty_x\cap L^2_x))$ satisfying the estimate	
\begin{align}\label{SE}
		\|g(t)\|_{Y_{j,k}}\leq C(1+t)^{-\frac{3}{4}+\ep}	\|g_0\|_{X_{j+|\ga|,k+|\ga|}},
	\end{align}
for all $t\geq0$.  Note that $C$ may depend on $\ep$ such that  $C$ tends to $\infty$ as $\ep\to 0$.
%where $C_{\ep,j,k}$ is a constant which only depends on $\ep$, $j$ and $k$.
\end{theorem}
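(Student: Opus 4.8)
The plan is to run a bootstrap on the velocity-weighted norms in \eqref{SE}, measured by $\|g\|_*:=\sup_{t\geq0}(1+t)^{3/4-\ep}\|g(t)\|_{Y_{j,k}}$, using Caflisch's decomposition to isolate from the solution a piece with a genuine Gaussian velocity tail that the $L^2$ spectral/energy theory can handle, plus a remaining piece with only a polynomial tail that is controlled entirely through the mild formulation \eqref{mild}; the extra $|\ga|$ velocity weights demanded in \eqref{smallnesssoft} will be traded, via a time-dependent velocity-space splitting, for the polynomial time decay, compensating for the degeneracy $\nu(v)\sim(1+|v|)^\ga\to0$ as $|v|\to\infty$. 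Concretely, fix a large parameter $R$ and split $\CK=\CK^s+\CK^r$, where $\CK^s$ keeps the part of the kernel of $\CK$ supported on $\{|v-v'|\leq1/R\}\cup\{|v|\geq R\}$, chosen so that the conjugated operator $w_\ell\CK^sw_\ell^{-1}$ has $L^\infty_v$-operator norm $\leq\de(R)\,\nu(v)$ with $\de(R)\to0$, for every $\ell\in[j,k+|\ga|]$, while $\CK^r$ has a smooth kernel compactly supported in $v$, bounded from $L^2_v$ into $w_\ell$-weighted $L^\infty_v$. Given a solution $g$ of \eqref{rbe}, decompose $g=g_1+g_2$ with
\[
\pa_tg_1+v\cdot\na_xg_1+\nu g_1=\CK^sg_1+Q(g,g),\qquad g_1(0)=g_0,
\]
\[
\pa_tg_2+v\cdot\na_xg_2+\CL g_2=\CK^rg_1,\qquad g_2(0)=0,
\]
so that $\CK^r$ feeds only a compactly supported, Gaussian-tailed source into the second equation; the same decomposition is carried along the classical positivity-preserving iteration (gain term explicit, loss term as a non-negative multiplier inside the exponential of the mild form) used to construct $g$, which yields $F=\mu+g\geq0$ in the limit.

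For $g_1$ one works with the mild form along backward characteristics: for $\ell\in\{j,k,k+|\ga|\}$,
\[
w_\ell g_1(t)=e^{-\nu(v)t}w_\ell g_0(x-vt,v)+\IT e^{-\nu(v)(t-s)}w_\ell\big[\CK^sg_1+Q(g,g)\big](s,x-v(t-s),v)\,ds,
\]
and one takes its $L^\infty_{x,v}$ norm (for $\ell=k,k+|\ga|$) and its $L^\infty_vL^2_x$ and $L^\infty_vL^1_x$ norms (for $\ell=j$), the characteristic shift being measure preserving in $x$. The $\CK^s$ term is dominated by $\de\IT e^{-\nu(v)(t-s)}\nu(v)\|w_\ell g_1(s)\|\,ds$ and absorbed; the nonlinear term is handled by the bilinear estimate $|w_\ell Q(f,h)|\lesssim\nu(v)\|w_\ell f\|_{L^\infty_v}\|w_\ell h\|_{L^\infty_v}$ (placing one factor in $L^\infty_x$ and the other in $L^2_x$, or both in $L^2_x$ by Cauchy--Schwarz for the $L^1_x$ bound) against the bootstrap assumption $\|g\|_*\leq 2C_0\ep_0$. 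Time decay is read off from the splitting $\R^3_v=\{|v|\leq(1+t)^\vth\}\cup\{|v|>(1+t)^\vth\}$ with $\vth\in\big((3/4-\ep)/|\ga|,\,1/|\ga|\big)$: on the inner region $\nu(v)\gtrsim(1+t)^{\ga\vth}$ with $1+\ga\vth>0$, so $e^{-\nu(v)t}$ and, after splitting the $s$-integral at $t/2$, the Duhamel terms either decay faster than any prescribed polynomial or reduce to $\sup_{s\geq t/2}\|\cdot(s)\|$; on the outer region $w_\ell\leq(1+t)^{-\vth|\ga|}w_{\ell+|\ga|}$ with $\vth|\ga|>3/4-\ep$, so the uniform-in-time bound on the $w_{\ell+|\ga|}$-norm — available thanks to the extra $|\ga|$ weights in \eqref{smallnesssoft} — suffices. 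The uniform-in-time $L^\infty_vL^1_x$ bound of $g_1$ produced here is exactly what will feed the dispersive estimate below.

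For $g_2=\int_0^te^{(t-s)\B}\CK^rg_1(s)\,ds$, with $\B:=-v\cdot\na_x-\CL$, the source $\CK^rg_1$ is compactly supported in $v$, hence has all velocity-weighted norms controlled by low-order norms of $g_1$ and lies in $L^2_vL^1_x$. Inserting this into the polynomial dispersive estimate for the soft-potential linearized Boltzmann semigroup in the whole space — schematically $\|e^{t\B}h\|_{L^2_{x,v}}\lesssim(1+t)^{-3/4+\ep}\big(\|h\|_{L^2_vL^1_x}+\|w_qh\|_{L^2_{x,v}}\big)$ for $q=q(\ep)$ large enough, obtained by combining the Fourier--spectral analysis of the linearized operator with a weighted energy method, the degenerate dissipation forcing both the $\ep$-loss and the blow-up of the constant as $\ep\to0$ — and convolving in time gives $\|g_2(t)\|_{L^2_{x,v}}\lesssim(1+t)^{-3/4+\ep}(\ep_0+\|g\|_*^2)$. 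Since $2j>3$, this together with $\|g_1\|_{L^2_{x,v}}\lesssim\|w_jg_1\|_{L^\infty_vL^2_x}$ controls the full $L^2_{x,v}$ norm of $g$ at rate $3/4-\ep$, which is the input needed to close the weighted $L^\infty$ and $L^\infty_vL^2_x$ norms of $g_2$ through its own mild form $g_2(t)=\IT e^{-\nu(v)(t-s)}[\CK g_2+\CK^rg_1](s,x-v(t-s),v)\,ds$: one iterates the representation once more so that the non-small term $\CK g_2$ is reduced, via the smoothing kernel of $\CK$ and (on the $\nu$-degenerate region) one further velocity splitting at $(1+t)^\vth$, to the $L^2_{x,v}$ bound just obtained on a compact velocity set, while $\CK^rg_1$ is estimated directly.

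Collecting the previous two steps in the norm $\|\cdot\|_*$ (augmented by the uniform $\|w_jg_1\|_{L^\infty_vL^1_x}$ bound) yields a closed inequality of the form $\|g\|_*\lesssim\|g_0\|_{X_{j+|\ga|,k+|\ga|}}+\|g\|_*^2$, which, combined with the analogous estimate on differences, the local existence theory and a continuity argument, produces the unique global mild solution satisfying \eqref{SE}, with $F=\mu+g\geq0$. The main obstacle, and the source of the hypotheses, is precisely the degeneracy of $\nu$ in the soft-potential case: the factor $e^{-\nu(v)(t-s)}$ no longer yields any time decay by itself, so every decay estimate must be routed through the velocity splitting, which can only borrow the finitely many extra weights of \eqref{smallnesssoft} — pushing the rate strictly below the benchmark $3/4$ and making $C$ (and the required weight threshold $k_0$) blow up as $\ep\to0$ — while the energy/dispersive estimate for $e^{t\B}$ suffers the same degeneracy; the delicate point of the argument is to keep the weight budget consistent across all these splittings and estimates, in particular always spending the $\nu$-prefactor of the bilinear term against the degeneracy rather than wasting it.
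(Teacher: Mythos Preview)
Your overall architecture---Caflisch-type splitting $g=g_1+g_2$ with the polynomial-tail piece $g_1$ handled by the mild form and the Gaussian-tail piece $g_2$ handled by the symmetric $L^2$ theory plus an $L^2$--$L^\infty$ iteration---is exactly what the paper does (the paper writes $g=g_1+\sqrt{\mu}g_2$ and puts $g_2$ directly in the symmetric frame with $L$, but since your source $\CK^rg_1$ is compactly supported in $v$ this is the same thing). Your $\CK$-splitting and the double iteration for $g_2$ are also essentially as in the paper.

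The genuine gap is in your rate bookkeeping for $g_1$ and its interaction with the Duhamel integral for $g_2$. You aim for $g_1$ at rate $(1+t)^{-(3/4-\ep)}$ (this is what your condition $\vth|\ga|>3/4-\ep$ targets, and your remark that the ``uniform-in-time $L^\infty_vL^1_x$ bound of $g_1$\ldots will feed the dispersive estimate'' makes this explicit). But the semigroup estimate for the symmetric problem in the whole space has rate $(1+t)^{-3/4}$, not $(1+t)^{-3/4+\ep}$ (see Proposition~\ref{propL2s}; the $\ep$-loss must come from the time convolution, not the semigroup itself). Feeding $g_1$ at rate $3/4-\ep$ into $g_2(t)=\int_0^t U(t-s)\CK^rg_1(s)\,ds$ therefore gives only
\[
\int_0^t (1+t-s)^{-3/4}(1+s)^{-(3/4-\ep)}\,ds \lesssim (1+t)^{-(1/2-\ep)},
\]
which does not close at $3/4-\ep$; a merely uniform $L^1_x$ bound on $g_1$ is even worse. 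What is actually needed is that \emph{all} the relevant norms of $g_1$ (including $L^\infty_vL^1_x$) decay at the strictly faster rate $(1+t)^{-(1-\ep)}$, so that the convolution returns $(1+t)^{-(3/4-\ep)}$. The paper obtains this via the elementary inequality $e^{-\nu(v)t}\le C_\ep(\nu(v)(1+t))^{-(1-\ep)}$ for the free term, together with the key device (Lemma~\ref{ledecay})
\[
\int_0^t e^{-\nu(v)(t-s)}\nu(v)(1+s)^{-r}\,ds \le C_r(1+t)^{-\min\{r,1\}},\qquad -3<\ga<0,
\]
applied with $r=1-\ep<1$; this lets the $g_1$ bootstrap close at the same rate $1-\ep$ on both sides (and is precisely where the constant blows up as $\ep\to0$). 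Your velocity-splitting scheme can in principle reach rate $1-\ep$ as well, by pushing $\vth|\ga|$ up toward $1$ rather than merely above $3/4-\ep$, but you would then have to run the $g_1$ bootstrap at that faster rate, and the ``uniform $L^1_x$'' shortcut must be replaced by a genuine $(1+t)^{-(1-\ep)}$ decay of $\|w_k g_1(t)\|_{L^\infty_vL^1_x}$ (the paper's Lemma~\ref{leg1soft}, estimate~\eqref{sg1L1}). Without this adjustment the $g_2$ step does not close.
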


There is extensive literature on the global well-posedness theory for the spatially inhomogeneous Boltzmann equation; see \cite{Cer, CIP, Gl} and references therein. In particular, the global existence of solutions with uniqueness and large time behavior has been well understood in the framework of perturbations around global Maxwellians since Ukai \cite{Uk} obtained the first result for hard potentials with angular cutoff. We mention further important progress in different aspects, including \cite{Guo-soft, SS, SG} for soft potentials, \cite{GuoY} for general bounded domains, \cite{AMUXY11,AMUXY12,GS} for non-cutoff potentials, and many others. In those works, the solutions are constructed as the form $F=\mu+\sqrt{\mu} f$ with $f$ belonging to a Banach space that can be embedded into $L^\infty_{x,v}$ so that the perturbation $F-\mu$ decays in large velocities with an exponential rate.

A recent remarkable result was obtained by Gualdani-Mischler-Mouhot \cite{GMM} on the spectral gap of the linearized problem after taking the perturbation as $F=\mu+g$ with $g$ allowed to have only an algebraic large velocity tail. The results in  \cite{GMM} are applicable to the cutoff Boltzmann equation with hard potentials when the space variable is in a torus. We also refer to two earlier papers \cite{MiMo, Mo} and their references. These motivated a lot of recent significant progress of the topic on slow decaying solutions to some kinetic equations. Among them we mention Carrapatoso-Mischler \cite{CM} for the nonlinear Landau equation with Coulomb potentials, Briant-Guo \cite{BG} for the cutoff Boltzmann equation in general bounded domains, Alonso-Morimoto-Sun-Yang \cite{AMSY,AMSY-bd} for the non-cutoff Boltzmann equation with hard potentials, Cao-He-Ji \cite{CHJ} and Cao \cite{Cao-22} for the non-cutoff Boltzmann equation with soft potentials, and Cao \cite{Cao} for the cutoff Boltzmann equation with soft potentials.

All of aforementioned existing results on global existence of solutions with polynomial tail in large velocities concern the case where the space domain is either a torus or a bounded domain. In such case, one can obtain the exponential rate (or sub-exponential rate in soft potentials, cf.~\cite{Cao}) for the linearzied operator $\CB:=-v\cdot \nabla_x-\CL$:
\begin{equation}
\label{int.sg}
\|e^{-\CB t}g_0\|_{L^2_k}\leq C e^{-\lambda t}\|g_0\|_{L^2_k}
\end{equation}
for some constant $\lambda>0$, where the norm of $L^2_k$ is defined as $\|f\|_{L^2_k}=\|w_kf\|_{L^2_{x,v}}$ and $g_0$ is purely microscopic. Furthermore, the above fast time-decay property is helpful to define the dissipative norm
\begin{equation}
\label{int.tn}
\vertiii{g}_\CE=\eta \|g\|_{\CE}+\int_0^\infty \|e^{-\CB t}g\|_\CE dt
\end{equation}
with a small constant $\eta>0$ and an energy norm $\CE$ with polynomial velocity weight, and also the exponential time decay of solutions to the linearized problem can be correspondingly recovered in terms of the dissipative norm $\vertiii{\cdot}_\CE$.

However, it is unclear for us to apply the same argument above to treat the case where the space domain is the whole space. In the case of the whole space, one loses the spectral gap as in \eqref{int.sg} and thus it may fail to directly introduce the same dissipative norm as in \eqref{int.tn} in order to bound the nonlinear dynamics. In fact, it is even not straightfoward to obtain a polynomial time-decay rate for $e^{-\CB t}$ by imposing an extra space integrability on initial data $g_0$ such as $g_0\in L^2_vL^1_x$,  although it is doable in case of the symmetric perturbation for $g=\sqrt{\mu}f$ with the velocity weight $\sqrt{\mu}$, for instance, see Proposition \ref{propL2} for hard potentials and Proposition \ref{propL2s} for soft potentials. For the proof of these two propositions, we refer to \cite{DSa,SS} and referenced therein.

In the paper we expect to develop a concise proof to construct global solutions with polynomial tail in large velocities for the Boltzmann equation in the whole space. The results can be regarded as an extension of \cite{Guo-IUMJ} and \cite{UT} to the situation where the initial data is allowed to admit the polynomial tail in large velocities. Our proof relies on the Caflisch's decomposition \cite{Caf} and the Guo's $L^2-L^\infty$ interplay technique \cite{GuoY,Guo}. Applications of such approach have been carried out in the recent studies \cite{DL-arma,DL-cmaa,DL-hard} on the  Boltzmann equation for uniform shear flow; similar applications also have been made in much earlier works \cite{Br, BG}. In what follows, we explain some key points in the proof of our main results Theorems \ref{hard} and \ref{soft} by considering only the linear problem:
\begin{equation}
\label{int.lp}
\pa_t g+v\cdot \na_x g+\CL g=0,\quad g(0,x,v)=g_0(x,v).
\end{equation}
Recall $\CL=\nu-\CK$. We introduce the decomposition $g=g_1+\mu^{1/2}g_2$ with
\begin{align*}
%\label{}
    \pa_t g_1+v\cdot\na_x g_1 +\nu g_1&=\CK_s g_1,   \\
    \pa_t g_2+v\cdot \na_x g_2+Lg_2 &=\CK_b g_1,
\end{align*}
supplemented with initial data
\begin{equation*}
%\label{ }
g_1(0,x,v)=g_{10}:=g_0,\quad g_2(0,x,v)=0,
\end{equation*}
where $\CK=\CK_s +\mu^{1/2}\CK_b$ with
$$
\CK_s:=\chi_{\{|v|\geq M\}}\CK,\quad \CK_b:=\chi_{\{|v|<M\}}\mu^{-1/2}\CK
$$
for a constant $M>0$ large enough, and $L=\mu^{-1/2}\CL\mu^{1/2}$ is the usual self-adjoint operator on $L^2 (\R^3_v)$. We may think of $\CK_s$ to be small and $\CK_b$ to be bounded in a certain sense. Therefore, the dynamics of $g_1$ behaves as a transport equation with a relaxation for initial data $g_0$ and the dynamics of $g_2$ is dominated by the semigroup $e^{-(v\cdot \na_x+L)t}$ with an inhomogeneous term for zero initial data.  In $L^2$ setting, the long time dynamics of $g=g_1+\mu^{1/2}g_2$ is expected to be determined by $g_2$ and thus the polynomial time decay $(1+t)^{-3/4}$ can be obtained as long as the norm of $g_1$ in $L^\infty_v L^1_x$ can propagate in time. To treat the nonlinear dynamics, it is necessary to further control the $L^\infty$ norm of solutions even in the linear problem above. In fact, in $L^\infty$ setting, it is again direct to estimate $g_1$ while we can make use of the $L^2-L^\infty$ interplay technique to obtain the $L^\infty$ time-decay in terms of the derived $L^2$ time-decay. In the end, we are devoted to obtaining the long time behavior of solutions to the linear problem \eqref{int.lp}  as
\begin{equation}
%\label{ }
\|e^{-\CB t}g_0\|_{Y_{j,k}}\leq C (1+t)^{-\frac{3}{4}}\|g_0\|_{X_{j,k}},\notag
\end{equation}
which is an analogue of \eqref{HE} for the nonlinear problem in case of hard potentials.

For soft potentials $\gamma<0$, the collision frequency $\nu\sim (1+|v|)^\gamma$ is degenerate in large velocities, so the estimates would be more subtle. In particular, for $L=\nu-K$, we need to use the decomposition $K=K^m+K^c$ that was first introduced in \cite{Guo-soft} for dealing with the soft potentials, and hence we can follow the iteration method in \cite{DHWY} to carry out the $L^2-L^\infty$ interplay estimates in soft potentials. Moreover, it is also delicate to obtain the time-decay of $g_1$. In general, one can get the polynomial time decay of any order by imposing much higher order polynomial velocity weight on initial data. However, it seems impossible to do so for treating the source term $\CK_s g_1$ in the weighted sense. Thus, in terms of the elementary inequality
 \begin{equation*}
%\label{ }
\int^t_0e^{-\nu(v)(t-s)}\nu(v)(1+s)^{-r}ds\leq C_r(1+t)^{-\min\{r,1\}}\qquad \text{for}\ -3<\ga<0,
\end{equation*}
cf.~Lemma \ref{ledecay}, we are forced to take $r=1-\epsilon<1$ such that one can close the estimates on $g_1$ with the same time weight function $(1+t)^{1-\epsilon}$ on both sides of the equation.

Two remarks are listed below.

\begin{remark}
We refer to Theorem \ref{local} for the local-in-time existence of \eqref{BE} with initial data $F_0(x,v):=\mu+g_0(x,v)\geq 0$ satisfying
\begin{equation*}
%\label{ }
\|w_k g_0\|_{L^{\infty}_{x,v}}<\ep_1.
%\notag
\end{equation*}
Thus, from Theorems \ref{hard} and \ref{soft}, no solutions in large time have been found without  requiring smallness of $g_0$ in the velocity weighted norm of $L^\infty_vL^1_x$ additionally. In fact, such extra smallness is an essential assumption in the proof of global existence of solutions since we need to use the time decay of solutions to close existence estimates. It is unclear whether it is possible  to remove the restriction on the $L^\infty_vL^1_x$ norm for initial data in both Theorems \ref{hard} and \ref{soft}. The situation is different from that in case of the symmetric perturbation (cf.~\cite{UT}, for instance) where the global existence can be established in the pure $L^2\cap L^\infty$ setting. We will clarify this issue in another work.
\end{remark}

\begin{remark}
Regarding the time decay of solutions, the rate $(1+t)^{-3/4}$ in \eqref{HE} is optimal for initial data in $L^\infty_vL^1_x$ as it corresponds to the rate of the heat semigroup or linearized Navier-Stokes in the whole space. Though, it is still a problem to remove the small parameter $\epsilon$ in \eqref{SE} to recover an optimal rate for soft potentials. On the other hand, for both hard and soft potentials, the optimal rate for $\|w_kg(t)\|_{L^\infty_{x,v}}$ should be $(1+t)^{-3/2}$. For simplicity, we would not pursuit the further proof of such issue in the current work.
\end{remark}

\medskip
\noindent{\bf Added Note:}\,The current work was uploaded to arXiv (arXiv:2212.04676) on December 9, 2022. Several relevant studies also have emerged, including \cite{CG} and \cite{CDL}, which address slow velocity-decaying solutions to the non-cutoff Boltzmann equation in the whole space, and \cite{LWW}, which investigates Green's functions of the cutoff Boltzmann equation under slow velocity decay conditions. One of our motivations is to further develop the inclusion of physical boundary effects in unbounded domains, as seen in \cite{CDZ} for the cutoff Boltzmann equation in an infinite layer. Additionally, this work offers the potential to construct time-dependent kinetic planar Couette flow solutions to the Boltzmann equation in an infinite channel, contrasting with the recent results in \cite{DLY} and \cite{DLSY} for finite channel domains. 

\bigskip
The rest of this paper is organized as follows. In section \ref{sec2} we collect preliminary estimates and inequalities. In section \ref{sec3} we provide a complete proof of the local-in-time existence of nonnegative mild solutions. In section \ref{sec4} and section \ref{sec5} we give the proof of the main results Theorem \ref{hard} and Theorem \ref{soft}, respectively.

%\newpage
\section{Preliminaries}\label{sec2}
It is important to study the properties of the operator $\CK$ defined in \eqref{DefCK}. In the region where the velocity $v$ is large enough, $\CK$  can be sufficiently small. Such issue on smallness of $\CK$ in the large velocity region was initiated from \cite{AEP-87} and extensively developed in many other works later. For the purpose, we first borrow the following result from a recent work \cite[Lemma 2.11 and Lemma 6.2]{Cao}.

\begin{lemma}\label{leQ}
Let $-3<\ga\leq 1$. For any $k>\max\{3,3+\ga\}$, there is a constant $C_k>0$ depending only on $k$ such that
	\begin{align}\label{controlK}
	\int_{\R^3}\int_{\S^2}|v-u|^\ga\frac{w_k(v)}{w_k(v')}e^{-\frac{|u'|^2}{2}}d\omega du\leq \frac{C}{k^{\frac{\ga+3}{4}}}\nu(v)+C_k\frac{\nu(v)}{(1+|v|)^2},\quad \forall\,v\in \R^3,
\end{align}	
and
	\begin{align}\label{controlQ}
		\int_{\R^3}\int_{\S^2}|v-u|^\ga\frac{w_k(v)}{w_k(v')w_k(u')}d\omega du\leq C_k\nu(v), \quad \forall\,v\in \R^3,
	\end{align}
where $C>0$ is a universal constant independent of $k$. Morever, when $0\leq \ga\leq 1$,  the bound in \eqref{controlQ} can be refined as the one similar to \eqref{controlK}, that is,
	\begin{align}\label{controlQ2}
		\int_{\R^3}\int_{\S^2}|v-u|^\ga\frac{w_k(v)}{w_k(v')w_k(u')}d\omega du\leq \frac{C}{k}\nu(v)+C_k\frac{\nu(v)}{(1+|v|)^2}, \quad \forall\,v\in \R^3.
	\end{align}
\end{lemma}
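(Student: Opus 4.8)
The plan is to reduce each of the three integrals to a product of one- and two-dimensional integrals by taking the polar axis of the solid-angle integration to be $\om$ itself, which turns the collision map into a mere swap of the components parallel to $\om$. Before doing so I would dispose of the regime $|v|\le R_0$ for a fixed large $R_0$: there $\nu(v)\ge c_0>0$, so it suffices to bound each left-hand side by some $C_k$, which is immediate since $w_k(v)/w_k(v')\le(1+R_0)^k$ and $\int_{\R^3}\int_{\S^2}|v-u|^\ga e^{-|u'|^2/2}\,d\om\,du<\infty$ for \eqref{controlK}, and $w_k(v)/(w_k(v')w_k(u'))\le(1+R_0)^k$ with the remaining integral convergent thanks to the fast decay of $w_k(u')^{-1}$ for \eqref{controlQ} and \eqref{controlQ2}. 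So from now on $|v|\ge R_0$.

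\textbf{Geometric reduction.} Fix $\om\in\S^2$ and write $v=v_{\parallel}\om+v_\perp$, $u=u_{\parallel}\om+u_\perp$ with $v_\perp,u_\perp\perp\om$. From $v'=v-[(v-u)\cdot\om]\om$, $u'=u+[(v-u)\cdot\om]\om$ one computes $v'=u_{\parallel}\om+v_\perp$, $u'=v_{\parallel}\om+u_\perp$, whence
\[
|v'|^2=u_{\parallel}^2+|v_\perp|^2,\qquad |u'|^2=v_{\parallel}^2+|u_\perp|^2,\qquad |v-u|^2=(v_{\parallel}-u_{\parallel})^2+|v_\perp-u_\perp|^2 .
\]
Writing $\om$ in spherical coordinates with polar axis $v/|v|$, so that $v_{\parallel}=|v|\cos\psi$ and $|v_\perp|=|v|\sin\psi$ with $\psi\in[0,\pi]$, and integrating out the azimuth (a factor $2\pi$), each of the three integrals becomes
\[
2\pi\int_0^\pi\sin\psi\,d\psi\int_{\R}du_{\parallel}\int_{\R^2}du_\perp\ |v-u|^\ga\,W(v;\psi,u_{\parallel},u_\perp),
\]
with $W=\dfrac{(1+|v|)^k}{(1+\sqrt{u_{\parallel}^2+|v|^2\sin^2\psi})^k}\,e^{-\frac{|v|^2\cos^2\psi+|u_\perp|^2}{2}}$ for \eqref{controlK}, and $W=\dfrac{(1+|v|)^k}{(1+\sqrt{u_{\parallel}^2+|v|^2\sin^2\psi})^k\,(1+\sqrt{|v|^2\cos^2\psi+|u_\perp|^2})^k}$ for \eqref{controlQ} and \eqref{controlQ2}.

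\textbf{The estimates.} For \eqref{controlK} the factor $e^{-|v|^2\cos^2\psi/2}$ is independent of $u$ and comes out; the $u_\perp$-integral of $|v-u|^\ga e^{-|u_\perp|^2/2}$ is controlled by elementary Gaussian inequalities, and in the $u_{\parallel}$-integral of $|v-u|^\ga(1+\sqrt{u_{\parallel}^2+|v|^2\sin^2\psi})^{-k}$ one rescales $u_{\parallel}=|v|t$, after which the factor $(1+|v|\sqrt{t^2+\sin^2\psi})^{-k}$ both makes the integral converge (for $k$ large, as in the hypothesis) and, via bounds of the type $\int_{\R}(1+t^2)^{-k/2}\,dt\lesssim k^{-1/2}$, supplies negative powers of $k$. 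Finally the $\psi$-integral is split at $\psi=\pi/2$: the region near $\pi/2$, where the weight ratio stays of order one, produces the main term $Ck^{-(\ga+3)/4}\nu(v)$ once the rescaled $u_{\parallel}$-integral is used, whereas on the complementary region the Gaussian $e^{-|v|^2\cos^2\psi/2}$ absorbs the growth $(1+|v|)^k$ and contributes only $C_k(1+|v|)^{-2}\nu(v)$. The argument for \eqref{controlQ} and \eqref{controlQ2} is identical, with $w_k(u')^{-1}=(1+\sqrt{|v|^2\cos^2\psi+|u_\perp|^2})^{-k}$ taking over the role of the Gaussian; since $|v|^2\cos^2\psi+|v|^2\sin^2\psi=|v|^2$, the two factors $w_k(v')^{-1}$ and $w_k(u')^{-1}$ together control the whole $(u_{\parallel},u_\perp)$-integral and give $C_k\nu(v)$ for all $-3<\ga\le1$; when $0\le\ga\le1$ the growth of $|v-u|^\ga$ makes up for the absence of the Gaussian and the same finer bookkeeping gives the refined bound $Ck^{-1}\nu(v)+C_k(1+|v|)^{-2}\nu(v)$.

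\textbf{Main difficulty.} The technical core is the sharp dependence on $k$ in \eqref{controlK} and \eqref{controlQ2}: a crude estimate of any single factor leaves a constant that is exponential in $k$, so one has to keep track of the exact interplay between the relative-velocity weight $|v-u|^\ga$, the spherical measure $\sin\psi\,d\psi$ near the degenerate angles $\psi\in\{0,\pi/2,\pi\}$, and the $k^{-1/2}$-type decay of the one-dimensional integrals coming from $w_k(v')^{-1}$ after the rescaling. The exponent $(\ga+3)/4$ is exactly what this optimization delivers, and it degenerates as $\ga\to-3^+$ because then the mass of $|v-u|^\ga$ is too concentrated near $v=u$ for the polynomial weight alone to produce any gain there.
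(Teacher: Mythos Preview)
The paper does not give its own proof of this lemma: it is quoted verbatim from \cite[Lemma~2.11 and Lemma~6.2]{Cao}, so there is no in-paper argument to compare your proposal against.

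On its own merits, your geometric reduction is correct and standard: decomposing $v,u$ into components parallel and perpendicular to $\omega$ does turn the collision into the swap $v'=(u_\parallel,v_\perp)$, $u'=(v_\parallel,u_\perp)$, and parametrizing $\omega$ by its angle $\psi$ to $v$ is a legitimate way to organize the $\omega$-integral. The disposal of $|v|\le R_0$ and the qualitative splitting in $\psi$ (Gaussian active away from $\psi=\pi/2$, weight ratio harmless near $\psi=\pi/2$) are also fine, and would suffice to prove the cruder bound \eqref{controlQ}.

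The gap is exactly what you label the ``main difficulty'': you assert, but do not show, that the combination of the rescaled $u_\parallel$-integral, the spherical measure, and the $|v-u|^\gamma$ factor yields the specific power $k^{-(\gamma+3)/4}$ in \eqref{controlK} (and $k^{-1}$ in \eqref{controlQ2}). In the region $\psi\approx\pi/2$ your own description says the weight ratio is of order one, so the one-dimensional bound $\int_\R(1+t^2)^{-k/2}\,dt\lesssim k^{-1/2}$ cannot be applied there as stated; some further localization in $\psi$ or in $u_\parallel$ is needed before any negative power of $k$ appears, and you have not specified it. The exponent $(\gamma+3)/4$ is in fact the whole point of the lemma beyond the crude $C_k\nu(v)$ bound, and it typically arises from optimizing a threshold parameter (for instance a cutoff of the form $|v'|\gtrless(1-\delta)|v|$ with $\delta$ chosen as a suitable negative power of $k$) rather than from a direct product of one-dimensional integrals. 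Until that optimization is written down and the powers of $k$ tracked through it, the proposal remains an outline rather than a proof; to complete it you should either reproduce the computation in \cite{Cao} or carry out such a threshold argument explicitly.
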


Using Lemma \ref{leQ}, we are able to specify the smallness property of $\CK f(v)$ over the large velocity region in the following lemma. The similar results also have been studied in \cite{DL-arma,DL-cmaa,DL-hard}.

\begin{lemma}\label{leKs}
	Let $-3< \ga \leq 1$. For any $k>\max\{3,3+\ga\}$, there is a constant $C_k>0$ depending only $k$ such that for any $M>0$,
	\begin{align}\label{smallCK}
		\chi_{\{|v|>M\}}w_k(v)|\CK f(v)|\leq \left(\frac{C}{k^{\frac{\ga+3}{4}}}+\frac{C_k}{M^2}\right)\nu(v)	\|w_kf\|_{L^\infty_v}, \quad \forall\,v\in \R^3,
	\end{align}
where $C>0$ is a universal constant independent of $k$ and $M$.
\end{lemma}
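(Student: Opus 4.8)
The plan is to decompose $\CK f$ into the three pieces appearing in \eqref{DefCK} and to bound each of them on $\{|v|>M\}$ using Lemma \ref{leQ}. First I would estimate $b(\theta)\le C|\cos\theta|\le C$, $\mu(u')\le e^{-|u'|^2/2}$, $\mu(v')\le e^{-|v'|^2/2}$, and apply $|f(v')|\le w_k(v')^{-1}\|w_kf\|_{L^\infty_v}$ (and likewise for $f(u')$ and $f(u)$) to arrive, for every $v\in\R^3$, at
\[
w_k(v)\,|\CK f(v)|\ \le\ C\,\|w_kf\|_{L^\infty_v}\,\big(\mathcal J_1(v)+\mathcal J_2(v)+\mathcal J_3(v)\big),
\]
with
\[
\mathcal J_1(v)=\int_{\R^3}\!\int_{\S^2}|v-u|^\ga\,\frac{w_k(v)}{w_k(v')}\,e^{-|u'|^2/2}\,d\omega\,du,\qquad
\mathcal J_2(v)=\int_{\R^3}\!\int_{\S^2}|v-u|^\ga\,\frac{w_k(v)}{w_k(u')}\,e^{-|v'|^2/2}\,d\omega\,du,
\]
and $\mathcal J_3(v)=w_k(v)\mu(v)\int_{\R^3}|v-u|^\ga w_k(u)^{-1}\,du$, where in $\mathcal J_3$ the $\omega$-integration has been carried out using $\int_{\S^2}b(\theta)\,d\omega\le C$.

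For $\mathcal J_1$ there is nothing to do: its integrand is exactly the one controlled by \eqref{controlK}, so $\mathcal J_1(v)\le Ck^{-(\ga+3)/4}\nu(v)+C_k(1+|v|)^{-2}\nu(v)$, and since $(1+|v|)^{-2}<M^{-2}$ on $\{|v|>M\}$ this already has the shape required by \eqref{smallCK}, with the universal factor $C$ independent of $k$ and $M$. For $\mathcal J_3$ I would use the elementary bound $\int_{\R^3}|v-u|^\ga w_k(u)^{-1}\,du\le C_k(1+|v|)$, valid for $k>\max\{3,3+\ga\}$ (the $|v-u|^\ga$ singularity is locally integrable because $\ga>-3$, and $w_k^{-1}$ absorbs large $u$); then $\mathcal J_3(v)\le C_k(1+|v|)^{k+1}\mu(v)\le C_k e^{-|v|^2/16}\nu(v)$ because $\mu$ beats every polynomial, so on $\{|v|>M\}$ one gets $\mathcal J_3(v)\le C_k e^{-M^2/16}\nu(v)\le C_kM^{-2}\nu(v)$.

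The term $\mathcal J_2$ is where the work is, and I expect it to satisfy the same bound $\mathcal J_2(v)\le Ck^{-(\ga+3)/4}\nu(v)+C_k(1+|v|)^{-2}\nu(v)$: it is the mirror of $\mathcal J_1$ in which the Gaussian weights the \emph{other} post-collisional velocity, and the estimate \eqref{controlK} is symmetric under interchanging $v'$ and $u'$, this being just the $(v,u)\mapsto(u,v)$ symmetry of the collision rules under which $(v',u')\mapsto(u',v')$; thus the companion bound is available from the same source as Lemma \ref{leQ}. If one wants a direct argument instead, I would split the $(\omega,u)$-integral at $|v'|=\tfrac12|v|$: on $\{|v'|\ge\tfrac12|v|\}$ one pulls the super-exponentially small factor $e^{-|v|^2/16}$ out of $e^{-|v'|^2/2}$, leaving a convergent integral of at most polynomial growth — convergent because the leftover $e^{-|v'|^2/4}$ and $w_k(u')^{-1}$ still supply decay in the $\omega$-directional and $\omega$-perpendicular components of $u$ — so this piece is absorbed into $C_kM^{-2}\nu(v)$ on $\{|v|>M\}$; on $\{|v'|<\tfrac12|v|\}$ one has $|u'|\ge\tfrac{\sqrt3}{2}|v|$ and hence $w_k(v)/w_k(u')\le C_k$, while $e^{-|v'|^2/2}$ concentrates $\omega$ near $\pm v/|v|$ on a cap of solid-angle measure $\sim(1+|v|)^{-2}$, which supplies the required decay. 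Either way, $\mathcal J_2(v)\le\big(Ck^{-(\ga+3)/4}+C_kM^{-2}\big)\nu(v)$ on $\{|v|>M\}$, and summing the three bounds gives \eqref{smallCK}.

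The hard part will be $\mathcal J_2$: the natural temptation is to dispose of the weight by a crude inequality such as $w_k(v)\le w_k(v')w_k(u')$ followed by $w_k(v')\mu(v')\le C_k$, but any such move trades the Gaussian for a large $k$-dependent constant and thereby destroys the small factor $k^{-(\ga+3)/4}$. What makes the estimate go through is that, for large $|v|$, the required smallness comes from the (small) measure of the near-grazing angular region rather than from any polynomial weight decay — exactly the mechanism built into Lemma \ref{leQ}.
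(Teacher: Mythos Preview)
Your decomposition into $\mathcal J_1,\mathcal J_2,\mathcal J_3$ and the treatment of each piece match the paper's proof essentially line for line: the paper calls them $H_1,H_2,H_3$, applies \eqref{controlK} to $H_1$, disposes of $H_2$ by the $v'\leftrightarrow u'$ symmetry, and for $H_3$ uses $\int|v-u|^\ga w_k(u)^{-1}du\le C_k\nu(v)$ together with $w_k(v)\mu(v)\le C_k|v|^{-2}$ on $\{|v|>M\}$.

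One small correction on the symmetry step: the swap $(v,u)\mapsto(u,v)$ does send $(v',u')\mapsto(u',v')$, but it also exchanges the \emph{external} variable $v$ with the integration variable $u$, so it does not by itself show $\mathcal J_2(v)=\mathcal J_1(v)$. The mechanism the paper invokes (citing Glassey) is a change of variable in $\omega$ alone, for fixed $v,u$, that interchanges the roles of $v'$ and $u'$; this is what gives $\mathcal J_2$ the same bound as $\mathcal J_1$. Your direct alternative via the split $|v'|\gtrless|v|/2$ would also close the estimate, so the argument is not in jeopardy --- just be careful about which symmetry you are actually using.
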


\begin{proof}
	From the definition of $\CK$ \eqref{DefCK} as well as the assumptions \eqref{ass.ck} and \eqref{b-bdd}, we have
	\begin{align*}
	&\chi_{\{|v|>M\}}w_k(v)|\CK f(v)|\\
	&\leq C\|w_kf\|_{L^\infty_v} \int_{\R^3}\int_{\S^2}\chi_{\{|v|>M\}}|v-u|^\gamma\left( \frac{w_k(v)\mu(u')}{w_k(v')}+\frac{w_k(v)\mu(v')}{w_k(u')}+\frac{w_k(v)\mu(v)}{w_k(u)}\right)d\omega du\notag\\
		&=C\|w_kf\|_{L^\infty_v} \left(H_1(v)+H_2(v)+H_3(v)\right),
	\end{align*}
for any $v\in \R^3$, where $H_i(v)$ $(i=1,2,3)$ denote the corresponding integral terms.
For $H_1(v)$, using \eqref{controlK}, it follows that
\begin{align*}
	H_1(v)\leq\left(\frac{C}{k^{\frac{\ga+3}{4}}}+\frac{C_k}{M^2}\right)\nu(v).
\end{align*}
For $H_2(v)$, the same estimate as above holds true since we can exchange $v'$ and $u'$ by rotation, cf.~\cite{Gl}. For $H_3(v)$, we note that for any $k>\max\{3,3+\ga\}$ there is a constant $C_k$ such that  $\int_{\R^3}|v-u|^\ga (1+|u|)^{-k}du\leq C_k\nu(v)$ and hence it holds
\begin{align*}
	H_3(v)\leq \chi_{\{|v|>M\}}w_k(v)\mu(v)\cdot C_k\nu(v)\leq \chi_{\{|v|>M\}}\frac{1}{|v|^2}\cdot C_k\nu(v)\leq \frac{C_k}{M^2}\nu(v).
\end{align*}
%Without loss of generality, one can get $$H_3(v)\leq \frac{C_0}{M^2}\nu(v)$$ by choosing $C_0$ to be large enough.
Collecting the estimates for $H_1(v)$, $H_2(v)$ and $H_3(v)$, we obtain \eqref{smallCK} and complete the proof of Lemma \ref{leKs}.
\end{proof}

On the other hand, in the symmetric framework, the linear operator $L$ defined as
\begin{align}\label{def.L}
	Lf:=\frac{1}{\sqrt{\mu}}\CK (\sqrt{\mu}f)=-\frac{1}{\sqrt{\mu}}Q(\mu,\sqrt{\mu}f)-\frac{1}{\sqrt{\mu}}Q(\sqrt{\mu}f,\mu),
\end{align}
is self-adjoint and non-negative definite on $L^2_v$, and thus it has much better properties than the operator $\CK$. In fact, it is well known that we have the Grad's splitting $L=\nu-K$ under  the assumptions \eqref{ass.ck} and \eqref{b-bdd}. Here, the multiplication operator $\nu$ satisfies \eqref{Defnu} and the integral operator $K$ satisfies the following property; see \cite{DHWY} and references therein for more details.

\begin{lemma}\label{propK}
Let $-3< \ga \leq 1$. There is a kernel function $k(\cdot,\cdot)$ with $k(v,\eta)=k(\eta,v)$ such that
%there exists a function $k(v,\eta)=k(\eta,v)$ such that
	\begin{align*}%\label{RepK}
		Kf(v)=\int_{\R^3}k(v,\eta)f(\eta)\,d\eta,
		%\quad v\in \R^3,
	\end{align*}
	with
	\begin{align*}
	\dis |k(v,\eta)|\leq C|v-\eta|^\ga e^{-\frac{|v|^2}{4}}e^{-\frac{|\eta|^2}{4}}+\frac{C}{|v-\eta|^\frac{3-\ga}{2}}e^{-\frac{|v-\eta|^2}{8}}e^{-\frac{\left| |v|^2-|\eta|^2\right|^2}{8|v-\eta|^2}}.
	\end{align*}
	%where $C_{k}$ is a constant which depends only on $k$.
	Moreover, for any $k\in \R$, there is a constant $C_k$ such that %we have
	\begin{align}\label{Prok}
		\int_{\R^3}\left| k(v,\eta)\right|\frac{w_k(v)}{w_k(\eta)} e^{\frac{|v-\eta|^2}{20}}\,d\eta \leq C_{k}(1+|v|)^{-1},\quad \forall\,v\in \R^3.
	\end{align}
\end{lemma}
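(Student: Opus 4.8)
The plan is to establish Lemma \ref{propK} in two separate parts: first the pointwise kernel bound, then the weighted integral estimate \eqref{Prok}. For the pointwise bound, I would recall the classical Grad computation of the kernel of $K$. Writing $K = K_2 - K_1$ where $K_1 f(v) = \int \nu_0(v,u) f(u)\,du$ comes from the loss-type term $\mu(v)^{1/2}\mu(u)^{1/2}$ convolved against $|v-u|^\gamma$, and $K_2$ comes from the gain term, one performs the standard change of variables in the $\omega$-integral (following, e.g., Grad or Glassey \cite{Gl}) to write $K_2 f(v) = \int k_2(v,\eta) f(\eta)\,d\eta$ with an explicit integral representation for $k_2$. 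The $K_1$ part immediately yields the first summand $C|v-\eta|^\gamma e^{-|v|^2/4}e^{-|\eta|^2/4}$ (after distributing the Gaussians symmetrically, which is possible since on the support $|u| \sim |v|$). For $k_2$, carrying out the remaining one-dimensional integral and completing the square in the exponent produces the Gaussian factor $e^{-|v-\eta|^2/8}e^{-(|v|^2-|\eta|^2)^2/(8|v-\eta|^2)}$, while the prefactor from the Jacobian and the $|v-u|^\gamma$ weight gives the algebraic singularity $|v-\eta|^{-(3-\gamma)/2}$; the symmetry $k(v,\eta) = k(\eta,v)$ is visible from the representation. This part is essentially a citation to the literature (the bound is standard for $-3 < \gamma \le 1$), so I would state it concisely and refer to \cite{DHWY} and references therein for the detailed computation.

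The substantive part is the weighted estimate \eqref{Prok}. Here I would split the integral according to the two terms in the pointwise bound. For the first term, $\int_{\R^3} |v-\eta|^\gamma e^{-|v|^2/4} e^{-|\eta|^2/4} \frac{w_k(v)}{w_k(\eta)} e^{|v-\eta|^2/20}\,d\eta$: the factor $e^{-|v|^2/4} w_k(v)$ is bounded (uniformly in $v$, and in fact decays), and $e^{|v-\eta|^2/20} e^{-|\eta|^2/4}$ is controlled by absorbing a fraction of $e^{-|\eta|^2/4}$; what remains is an integrable function of $\eta$ times the ratio $w_k(v)/w_k(\eta)$. One checks $\frac{w_k(v)}{w_k(\eta)} \le C(1+|v-\eta|)^{|k|}$, and this polynomial growth in $|v-\eta|$ is dominated by a small piece of the Gaussian $e^{-|v-\eta|^2/8}$ (reserved before distributing), leaving a bounded constant — in fact the $(1+|v|)^{-1}$ gain comes from the decay of $e^{-|v|^2/4}w_k(v)$. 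The second term is the delicate one.

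For the second term, the task is to bound
\begin{equation*}
I(v) := w_k(v)\int_{\R^3} \frac{1}{|v-\eta|^{(3-\gamma)/2}} e^{-|v-\eta|^2/8} e^{-\frac{(|v|^2-|\eta|^2)^2}{8|v-\eta|^2}} \frac{1}{w_k(\eta)} e^{|v-\eta|^2/20}\,d\eta.
\end{equation*}
First combine $e^{-|v-\eta|^2/8} e^{|v-\eta|^2/20} = e^{-3|v-\eta|^2/40}$, still a genuine Gaussian in $z := v-\eta$; split off a small fraction, say $e^{-|z|^2/60}$, to kill the polynomial $w_k(v)/w_k(\eta) \le C(1+|z|)^{|k|}$, and retain $e^{-c|z|^2}$ with $c>0$. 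Then change variables to $z = v-\eta$ and decompose $z = z_\parallel + z_\perp$ into components parallel and perpendicular to $v$. Using $|v|^2 - |\eta|^2 = 2 v\cdot z - |z|^2$, one has $|\,|v|^2-|\eta|^2\,| \gtrsim |v|\,|z_\parallel|$ on a suitable region, so the middle Gaussian factor behaves like $e^{-c |v|^2 z_\parallel^2 / |z|^2}$, which after integrating in $z_\parallel$ (against the singularity $|z|^{-(3-\gamma)/2}$, integrable near $z_\parallel = 0$ in the relevant dimension since $(3-\gamma)/2 < 3$) contributes a factor $\sim |v|^{-1}$; integrating the remaining $e^{-c|z_\perp|^2}$ in the two perpendicular directions is harmless. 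Combined with the bounded prefactor from $w_k(v) e^{-c|z|^2}$-type terms, this yields the $(1+|v|)^{-1}$ bound, uniformly for all $v$ and for each fixed $k$ (with constant $C_k$ possibly blowing up as $|k| \to \infty$ through the $(1+|z|)^{|k|}$ absorption). The main obstacle, and the point requiring care, is precisely this anisotropic estimate on the second term: one must exploit the coupling between the $e^{-(|v|^2-|\eta|^2)^2/(8|v-\eta|^2)}$ factor and the singularity $|v-\eta|^{-(3-\gamma)/2}$ to extract the single power of $|v|^{-1}$ decay, and handle the degenerate soft-potential case $\gamma \to -3$ where the singularity $|z|^{-(3-\gamma)/2}$ is strongest (exponent up to $3$); splitting into near-zero and away-from-zero regions for $|z|$, and within the near-zero region exploiting the Gaussian smallness, resolves it. I would organize the argument as: reduce to these two integrals; dispatch the first by Gaussian decay; and for the second, change variables, split $z$ into parallel/perpendicular parts, and integrate, being attentive to the integrability of the algebraic singularity.
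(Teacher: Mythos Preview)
The paper does not supply its own proof of this lemma; it simply states the result and refers the reader to \cite{DHWY} and the references therein. Your proposal is precisely the standard argument carried out in those references: Grad's explicit computation of the kernel via the $K=K_2-K_1$ splitting and the Carleman-type change of variables for the pointwise bound, followed by the anisotropic integration (parallel/perpendicular decomposition of $z=v-\eta$ relative to $v$) that extracts the $(1+|v|)^{-1}$ decay from the Gaussian factor $e^{-(|v|^2-|\eta|^2)^2/(8|v-\eta|^2)}$. So your approach is correct and coincides with what the cited literature does; there is nothing to compare since the paper itself defers entirely to those sources.

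One small point of care in your sketch: the inequality $|\,|v|^2-|\eta|^2\,|\gtrsim |v|\,|z_\parallel|$ is not valid globally, since $|v|^2-|\eta|^2=2|v|z_\parallel-|z|^2$ and the $|z|^2$ term can dominate. The clean way (as in \cite{DHWY} or \cite{SG}) is to write $|v|^2-|\eta|^2=(v+\eta)\cdot(v-\eta)$ and observe that the exponent equals $\frac{1}{8}|z|^2+\frac{1}{2}|\zeta_\parallel|^2$ with $\zeta=\frac{v+\eta}{2}$ and $\zeta_\parallel$ its component along $\hat z$; then split the $\eta$-integral according to whether $|\eta|\le \tfrac{1}{2}|v|$ (where $|\zeta_\parallel|\gtrsim |v|$ gives the decay directly) or $|\eta|>\tfrac{1}{2}|v|$ (where the weight ratio $w_k(v)/w_k(\eta)$ is harmless and the $|z_\parallel|$-integration against the Gaussian in $\zeta_\parallel$ yields the $|v|^{-1}$). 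This is only a refinement of what you wrote, not a different idea.
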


The large velocity decay property in \eqref{Prok} is not enough for the use in soft potentials. In fact, \cite{Guo-soft,SG} introduced a further useful decomposition of $K$ via a cut-off smooth function $\chi_m=\chi_m(\tau)$  with a parameter $0\leq m\leq 1$ such that $0\leq \chi_m \leq 1$, and $\chi_m(\tau)=1$ for $\tau \leq m$ and $\chi_m(\tau)=0$ for $\tau \geq 2m$. One can split $K$ into two parts $K=K^m+K^c$ with
\begin{align}\label{defKm}
	K^mf(v)=&\int_{\R^3}\int_{\S^2}B(v-u,\theta)\chi_m(|v-u|)\sqrt{\mu(u)}\notag\\
	&\cdot\left( \sqrt{\mu(u')}f(v')+\sqrt{\mu(v')}f(u')
	-\sqrt{\mu(v)}f(u) \right)d\omega du,
\end{align}
and $K^c=K-K^m$. The following lemma provides useful estimates on both parts $K^m$ and $K^c$. For the proof, we may refer to \cite{Guo-soft,SG} as well as \cite{DHWY}.

\begin{lemma}%\label{Kc}
	Let $-3< \ga < 0$ and $0<m\leq 1$. For the part $K^m$, there is a constant $C$ independent of $m$ such that
	% and $k \in \R$. $K^m$ satisfies
	\begin{align}\label{smallKm}
		%\begin{split}
		|K^mf(v)|\leq Cm^{\ga+3}e^{-\frac{|v|^2}{10}}\|f\|_{L^\infty_v},\quad \forall\,v\in \R^3. %\label{BKm}
		%\end{split}
	\end{align}
For the part $K^c$, there is a kernel function $l(\cdot,\cdot)$ with $l(v,\eta)=l(\eta,v)$ such that
	%Moreover, there is a function $l(v,\eta)=l(\eta,v)$ such that
	\begin{align}\label{RepKc}
		K^cf(v)=\int_{\R^3}l(v,\eta)f(\eta)d\eta,
	\end{align}
	with
	\begin{align*}%\label{Estionl}
	\dis |l(v,\eta)|\leq C|v-\eta|^\ga e^{-\frac{|v|^2}{4}}e^{-\frac{|\eta|^2}{4}}+\frac{C_{m}}{|v-\eta|^\frac{3-\ga}{2}}e^{-\frac{|v-\eta|^2}{16}}e^{-\frac{\left| |v|^2-|\eta|^2\right|^2}{16|v-\eta|^2}}.
\end{align*}
Moreover, for $l(v,\eta)$, the following estimates also hold true. For any $k\in \R$, there is a constant $C_{m,k}$ such that %it holds that
\begin{align}
	&\int_{\R^3}\left| l(v,\eta)\right| \frac{w_k(v)}{w_k(\eta)} d\eta \leq C_{m,k} (1+|v|)^{-1},\label{slowl}\\
		&\int_{\R^3}\left| l(v,\eta)\right| \frac{w_k(v)}{w_k(\eta)} e^{-\frac{|\eta|^2}{20}}d\eta \leq C_{m,k} e^{-\frac{|v|^2}{100}}, \notag\\
		&\int_{\R^3}\left| l(v,\eta)\right| \frac{w_k(v)}{w_k(\eta)} e^{\frac{|v-\eta|^2}{20}}d\eta \leq C_{m,k} %m^{\ga-1}
		\frac{\nu(v)}{(1+|v|)^2},\label{decayl}
	\end{align}
	for all $v\in \R^3$.
\end{lemma}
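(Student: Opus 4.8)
The statement splits into two essentially independent tasks. The first is the pointwise bound \eqref{smallKm} for $K^m$, an $L^\infty_v$ estimate exploiting that $K^m$ only feels the small collision region $\{|v-u|<2m\}$. The second is the kernel representation \eqref{RepKc} for $K^c=K-K^m$ together with the weighted bounds \eqref{slowl}--\eqref{decayl}; for the pointwise bound on $l$ I would simply carry the cutoff $\chi_m$ through the Carleman-type change of variables that already produces the Grad kernel $k$ of $K$ in Lemma \ref{propK}, and the symmetry $l(v,\eta)=l(\eta,v)$ then comes from $k(v,\eta)=k(\eta,v)$ together with the fact that $K^m$ is itself self-adjoint on $L^2_v$ (the truncation $\chi_m(|v-u|)=\chi_m(|v'-u'|)$ is collision-invariant, so the usual symmetrization goes through).

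For \eqref{smallKm}, the plan is to bound $|f|$ by $\|f\|_{L^\infty_v}$ and split the integrand in \eqref{defKm} into the three pieces carrying $f(v')$, $f(u')$ and $f(u)$. On $\{|v-u|<2m\}$ one has $|v-v'|=|v-u'|\le|v-u|<2m\le2$, so each of $u,u',v'$ stays within distance $2m$ of $v$; hence the surviving Gaussian is bounded by $e^{-(|v|-2m)^2/4}$, and the elementary inequality $e^{-(|v|-2m)^2/4}\le C\,e^{-|v|^2/10}$, valid for $0<m\le1$, produces the weight $e^{-|v|^2/10}$. What remains is $\int_{\{|v-u|<2m\}}|v-u|^\ga\,du=C\,m^{\ga+3}$, which is finite and vanishes as $m\to0$ precisely because $\ga+3>0$; this is the source of the factor $m^{\ga+3}$. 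Summing the three pieces gives \eqref{smallKm}.

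For $K^c$ I would rerun the Grad/Carleman change of variables with the weight $1-\chi_m(|v-u|)$, which restricts the collision to $|v-u|\ge m$. The loss-type term yields the first summand $C|v-\eta|^\ga e^{-|v|^2/4}e^{-|\eta|^2/4}$; the two gain-type terms yield the near-diagonal summand, whose singularity $|v-\eta|^{-(3-\ga)/2}$ comes from the collisional Jacobian, with the constants degrading to $C_m$ and the Gaussian exponents relaxing (from $1/8$ to $1/16$, which is harmless) because on $|v-u|\ge m$ the grazing factor $|v-u|^\ga$ can only be controlled by $m^\ga$. Granting this pointwise bound on $l$, \eqref{slowl} and the middle estimate are routine: substitute $\xi=\eta-v$, split into $\{|\xi|\le1\}$, where $|\xi|^{-(3-\ga)/2}$ is integrable because $\ga>-3$ and all weights are $O(1)$, and $\{|\xi|>1\}$, where $e^{-|\xi|^2/16}$ dominates both $w_k(v)/w_k(\eta)\le(1+|\xi|)^{|k|}$ and the weight $e^{|\xi|^2/20}$ since $1/16>1/20$, and extract the algebraic decay $(1+|v|)^{-1}$ from the factor $e^{-\left| |v|^2-|\eta|^2\right|^2/(16|v-\eta|^2)}$ via the identity $\int_{\mathbb{S}^2}e^{-(2v\cdot\theta+r)^2/c}\,d\theta\le C_c/(1+|v|)$ (for the middle estimate the extra $e^{-|\eta|^2/20}$ directly upgrades the output to Gaussian decay in $v$).

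The delicate point, and the one I expect to be the real obstacle, is the sharper rate $\nu(v)/(1+|v|)^2$ in \eqref{decayl}, which is strictly stronger than the $(1+|v|)^{-1}$ afforded by the pointwise bound on $l$ and hence cannot be read off from $|l|$ alone. To obtain it I would work directly from $K^c$, where three mechanisms combine: the cutoff $|v-u|\ge m$ together with the localization of $u$ (and of $u'$) by $\sqrt{\mu(u)}$ (resp. $\sqrt{\mu(u')}$) forces $|v-u|\sim1+|v|$ for $|v|$ large, so the factor $|v-u|^\ga$ already supplies $\nu(v)\sim(1+|v|)^\ga$; the bound $b(\theta)\le C|\cos\theta|$ from \eqref{b-bdd}, via $|\cos\theta|=|v-\eta|/|v-u|\sim|v-\eta|/(1+|v|)$, supplies a further $(1+|v|)^{-1}$ that absorbs the collisional Jacobian $|v-\eta|^{-1}$; and the angular integration of the energy-gap Gaussian $e^{-\left| |v|^2-|\eta|^2\right|^2/(c|v-\eta|^2)}$ supplies one more $(1+|v|)^{-1}$. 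Keeping all three simultaneously, with the cutoff present and the Gaussian tails controlled, is the technically heavy bookkeeping; once \eqref{decayl} is in hand the remainder of the lemma follows as above, in line with the computations of \cite{Guo-soft,SG,DHWY}.
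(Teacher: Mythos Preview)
The paper does not give its own proof of this lemma; it states the result and defers entirely to \cite{Guo-soft,SG,DHWY}. Your sketch is therefore already more than the paper provides, and it follows the same route as those references: a direct $L^\infty_v$ bound on $K^m$ exploiting the small-relative-velocity cutoff, the Carleman representation for $K^c$, and then weighted integration of the resulting kernel.

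A couple of small corrections and one clarification. In the $K^m$ argument the equality $|v-v'|=|v-u'|$ is not right (they are $|v-u||\cos\theta|$ and $|v-u||\sin\theta|$), but both are $\le|v-u|<2m$ so your conclusion is unaffected; in fact the common factor $\sqrt{\mu(u)}$ alone, with $|u-v|<2m\le2$, already yields the $e^{-|v|^2/10}$ weight, so tracking $u',v'$ is unnecessary. Your diagnosis of \eqref{decayl} as the only nontrivial estimate is correct, and the three-mechanism heuristic (the factor $|v-u|^\gamma\sim\nu(v)$ from Maxwellian localization of $u$, the angular factor $|\cos\theta|=|v-v'|/|v-u|$ from \eqref{b-bdd} absorbing the Jacobian, and a further $(1+|v|)^{-1}$ from integrating the energy-gap Gaussian) is exactly the structure exploited in \cite{DHWY}. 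One point of precision: \eqref{decayl} is a bound on $\int|l(v,\eta)|\,(\cdots)\,d\eta$, not on $K^cf(v)$, so you cannot literally ``work directly from $K^c$'' in its pre-collision form; the rigorous version runs the three mechanisms through the explicit Carleman kernel (the plane integral orthogonal to $v-\eta$), where they are still visible but the bookkeeping is heavier. With that understood, your outline matches the cited proofs.
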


The next lemma is concerned with the time-decay of the time convolution term of the form $e^{-\nu(v)t}\ast_t \nu(v)(1+t)^{-r}$ for $r>0$. In case of hard potentials, such convolution can absorb the large velocity growth of $\nu(v)$ via the time integration, while in case of soft potentials, it still decays in time with a rate $(1+t)^{-\min\{r,1\}}$  even though $\nu(v)$ is degenerate in large velocities.

\begin{lemma}\label{ledecay}
	Let $-3<\ga\leq1$ and $r>0$. There is a constant $C_r>0$ such that in case $0\leq \gamma\leq 1$, it holds
	\begin{align}
		&\int^t_0e^{-\nu(v)(t-s)}\nu(v)(1+s)^{-r}ds\leq C_r(1+t)^{-r}, \quad \forall\,t\geq 0,
		%\qquad\qquad\,\text{for\ $0\leq\ga\leq1$},
		\label{decay}
	\end{align}
	and in case $-3<\gamma<0$ and $r\neq 1$, it holds
	\begin{align}
		\int^t_0e^{-\nu(v)(t-s)}\nu(v)(1+s)^{-r}ds\leq C_r(1+t)^{-\min\{r,1\}},\quad \forall\,t\geq 0.
		\label{decays}
	\end{align}
\end{lemma}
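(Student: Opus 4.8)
The plan is to treat both cases by splitting the time integral at the midpoint $s=t/2$ and invoking two elementary facts. First, for every $v\in\R^3$ and every $\tau\ge 0$,
$\int_0^\tau \nu(v)e^{-\nu(v)\sigma}\,d\sigma = 1-e^{-\nu(v)\tau}\le 1$, uniformly in $v$ and $\tau$. Second, $\sup_{\lambda>0}\lambda e^{-\lambda t/2}=\tfrac{2}{et}$, so that $\nu(v)e^{-\nu(v)t/2}\le C(1+t)^{-1}$ once $t$ is bounded below — this is precisely where the degeneracy of $\nu(v)$ as $|v|\to\infty$ in the soft-potential case is absorbed. For $t\le 1$ both estimates are trivial, since the left-hand side is bounded by $\int_0^t\nu(v)e^{-\nu(v)(t-s)}\,ds\le 1$, which is $\le C_r(1+t)^{-r}$ (resp.\ $\le C_r(1+t)^{-\min\{r,1\}}$) on a bounded time interval; so I may assume $t\ge 1$ throughout.

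First I would do the hard-potential case \eqref{decay}, where $\nu(v)\ge\nu_0>0$ by \eqref{Defnu}. On $[0,t/2]$ I bound $(1+s)^{-r}\le 1$ and $e^{-\nu(v)(t-s)}\le e^{-\nu(v)t/2}\le e^{-\nu_0 t/2}$, and integrate in $s$ to get a contribution $\le e^{-\nu_0 t/2}\le C_r(1+t)^{-r}$. On $[t/2,t]$ I bound $(1+s)^{-r}\le(1+t/2)^{-r}\le 2^r(1+t)^{-r}$ and use $\int_{t/2}^t\nu(v)e^{-\nu(v)(t-s)}\,ds\le 1$. Adding the two pieces gives \eqref{decay}.

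Then the soft-potential case \eqref{decays} with $-3<\gamma<0$ and $r\ne 1$. After the substitution $\sigma=t-s$ the integral reads $\int_0^t\nu(v)e^{-\nu(v)\sigma}(1+t-\sigma)^{-r}\,d\sigma$. On $[0,t/2]$ one has $1+t-\sigma\ge 1+t/2$, hence $(1+t-\sigma)^{-r}\le 2^r(1+t)^{-r}$, and $\int_0^{t/2}\nu(v)e^{-\nu(v)\sigma}\,d\sigma\le 1$, so this piece is $\le C(1+t)^{-r}$. On $[t/2,t]$ I use $e^{-\nu(v)\sigma}\le e^{-\nu(v)t/2}$ and change variables once more to obtain $\nu(v)e^{-\nu(v)t/2}\int_0^{t/2}(1+\sigma)^{-r}\,d\sigma\le C(1+t)^{-1}\int_0^{t/2}(1+\sigma)^{-r}\,d\sigma$ by the second elementary fact. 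Now the dichotomy enters: if $r>1$ then $\int_0^{t/2}(1+\sigma)^{-r}\,d\sigma\le\tfrac{1}{r-1}$, so this term is $\le C_r(1+t)^{-1}$; if $0<r<1$ then $\int_0^{t/2}(1+\sigma)^{-r}\,d\sigma\le C_r(1+t)^{1-r}$, so this term is $\le C_r(1+t)^{-r}$. Either way the near-diagonal piece is $\le C_r(1+t)^{-\min\{r,1\}}$, and combined with the $(1+t)^{-r}$ from the first piece this yields \eqref{decays}.

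The only delicate point — and the reason the hypothesis $r\ne 1$ is imposed — is exactly the estimate of $\int_0^{t/2}(1+\sigma)^{-r}\,d\sigma$ on the near-diagonal piece: at $r=1$ this integral is $\sim\log t$ and would force a spurious logarithmic loss $(1+t)^{-1}\log(1+t)$, so the borderline exponent must be excluded. Everything else is routine once the two elementary inequalities above are available; note that in the hard-potential case the uniform lower bound $\nu(v)\ge\nu_0$ makes even the second inequality unnecessary, because the small-time piece decays exponentially outright.
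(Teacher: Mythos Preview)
Your proof is correct and follows essentially the same strategy as the paper: split the time integral at $t/2$, handle the far piece via $(1+s)^{-r}\le C(1+t)^{-r}$ together with $\int \nu e^{-\nu(t-s)}\,ds\le 1$, and handle the near piece by extracting a factor $(1+t)^{-1}$ in the soft case (where the paper uses $e^{-x}\le C/(1+x)$ and the boundedness of $\nu$ instead of your $\sup_{\lambda>0}\lambda e^{-\lambda t/2}=2/(et)$, but these are interchangeable). One cosmetic remark: in your hard-potential $[0,t/2]$ step the phrasing ``bound $e^{-\nu(v)(t-s)}\le e^{-\nu_0 t/2}$ and integrate'' reads as if you pull the exponential out and then integrate $\nu(v)$ over $[0,t/2]$, which would fail for $\gamma>0$; what you actually need (and clearly intend, given your first elementary fact) is to compute $\int_0^{t/2}\nu(v)e^{-\nu(v)(t-s)}\,ds=e^{-\nu(v)t/2}-e^{-\nu(v)t}\le e^{-\nu_0 t/2}$ directly.
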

\begin{proof}
	We always consider the integral by splitting the time interval $[0,t]$ into $[0,t/2]$ and $(t/2,t]$. Let $r>0$. We first consider the proof of \eqref{decay} for $0\leq\ga\leq1$. In this case, $\nu(v)$ has a positive lower bound, namely,  $\inf_v\nu(v)\geq \nu_0$ for a constant $\nu_0>0$. For the interval $0\leq s\leq t/2$, we have $t-s\geq -t/2$ and thus
	\begin{align*}
		\int^{t/2}_0e^{-\nu(v)(t-s)}\nu(v)(1+s)^{-r}ds&\leq e^{-\frac{\nu_0}{4}t}\int^t_0e^{-\frac{\nu(v)}{2}(t-s)}\nu(v)ds\notag\\
		&\leq Ce^{-\frac{\nu_0}{4}t}.
	\end{align*}
	For the other time interval $t/2<s\leq t$, it holds that
	\begin{align}\label{t/2,t}
		\int^t_{t/2}e^{-\nu(v)(t-s)}\nu(v)(1+s)^{-r}ds&\leq C(1+t/2)^{-r}\int^t_0e^{-\nu(v)(t-s)}\nu(v)ds\notag\\
		&\leq C_r(1+t)^{-r}.
	\end{align}
Combing both cases, \eqref{decay} is proved.

Next, we consider \eqref{decays} for $-3<\ga<0$. In this case, $\nu(v)\sim (1+|v|)^\gamma$ is uniformly bounded. Moreover, we note that there is a constant $C>0$ such that $e^{-x}\leq \frac{C}{1+x}$ for any $x\geq 0$. Thus, for $0\leq s\leq t/2$, it holds that
\begin{align*}
	\int^{t/2}_0e^{-\nu(v)(t-s)}\nu(v)(1+s)^{-r}ds&\leq C\int^{t/2}_0\frac{1}{1+\nu(v)(t-s)}\nu(v)(1+s)^{-r}ds\notag\\
	&\leq C\int^{t/2}_0(1+t-s)^{-1}(1+s)^{-r}ds\notag\\
	&\leq \frac{C}{(1+t/2)^{-1}}\int^t_0(1+s)^{-r}ds\notag\\
	&\leq C_r(1+t)^{-\min\{r,1\}},
\end{align*}
where we have used the fact that $r\neq 1$.
The other case $t/2\leq s\leq t$ can be estimated in the same way as \eqref{t/2,t}. Thus, \eqref{decays} is proved. The proof of Lemma \ref{ledecay} is complete.
\end{proof}

In the end we make a list of some elementary inequalities related to the time decay of convolution of two time-decay functions.

\begin{lemma}%\label{timedecay}
	Let $r>0$ and $0<q\leq r$, then there is a constant $C_{r,q}>0$ such that for any $t\geq 0$, %we have
	\begin{align}\label{tdeday}
		&\int^t_0(1+t-s)^{-q}(1+s)^{-r}ds\leq \begin{cases}
			C_{r,q}(1+t)^{-q}, \qquad\qquad \qquad  \text{$r>1$},\\
			C_{r,q}(1+t)^{-q}\log (1+t)\qquad\,   \text{$r=1$},\\
			C_{r,q}(1+t)^{-q-r+1}\qquad\qquad\   \text{$r<1$}.
		\end{cases}
	\end{align}
Furthermore, let $r>0$ and $\lambda>0$, then there is a constant $C_{r,\lambda}>0$ such that for any $t\geq 0$, %it holds that
	\begin{align}\label{tdecay1}
		\int^t_0e^{-\lambda(t-s)}(1+s)^{-r}\,ds\leq C_{r,\lambda}(1+t)^{-r}.
	\end{align}
\end{lemma}

%In the next section, we will prove local existence.

%\newpage
\section{Local-in-time Existence}\label{sec3}
In this section, we establish the local-in-time existence of mild solutions to the Cauchy problem \eqref{BE} or equivalently \eqref{rbe}. For completeness, we give the full details of the proof. In order to state the main result, we define the function space
\begin{align*}
%\label{ }
\mathcal{X}_{T,k,j}:=\{g(t,x,v) &\in  L^\infty(0,T; L^\infty_{x,v}\cap L^\infty_vL^2_x): \\
&\sup_{0\leq t\leq T}\left\|w_k g(t)\right\|_{L^\infty_{x,v}}+\sup_{0\leq t\leq T}\left\|w_j g(t)\right\|_{L^\infty_vL^2_x}<\infty\}.
\end{align*}

\begin{theorem}\label{local}
	Assume \eqref{ass.ck} and \eqref{b-bdd}. There is $k_0>0$ large enough such that for any $k\geq j\geq k_0$, the following holds true. There are $\ep_1>0$ and $T_*>0$ such that if $F_0(x,v):=\mu(v)+g_0(x,v)\geq 0$ with
\begin{equation}
\label{thm.l.as}
\|w_k g_0\|_{L^{\infty}_{x,v}}<\ep_1,
\end{equation}	
then the Cauchy problem on the Boltzmann equation \eqref{BE} or \eqref{rbe} admits a unique mild solution $F(t,x,v)=\mu(v)+g(t,x,v)\geq 0$ with $(t,x,v)\in [0,T_*]\times \R^3_x\times \R^3_v$ in the sense of \eqref{mild} such that $g\in \mathcal{X}_{T_*,k,j}$ satisfies the following estimate:
	\begin{equation}\label{LE}
		\sup_{0\leq t\leq T_*}\left\|w_k g(t)\right\|_{L^\infty_{x,v}}+\sup_{0\leq t\leq T_*}\left\|w_j g(t)\right\|_{L^\infty_vL^2_x}\leq 2\big(\left\|w_k g_0\right\|_{L^\infty_{x,v}}+\left\|w_j g_0\right\|_{L^\infty_vL^2_x}\big).
	\end{equation}
\end{theorem}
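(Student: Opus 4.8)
The plan is to run a standard contraction-mapping argument in the complete metric space $\mathcal{X}_{T_*,k,j}$, using the mild formulation \eqref{mild} to define the solution map and exploiting the short-time factor $T_*$ to absorb all the (uniformly bounded, since $T_*\le 1$ say) linear and quadratic coefficients. First I would rewrite \eqref{mild} as $g=\mathcal{N}g$ where
\begin{align*}
(\mathcal{N}g)(t,x,v)&=e^{-\nu(v)t}g_0(x-vt,v)+\int_0^t e^{-\nu(v)(t-s)}(\CK g)(s,x-v(t-s),v)\,ds\\
&\quad+\int_0^t e^{-\nu(v)(t-s)}Q(g,g)(s,x-v(t-s),v)\,ds,
\end{align*}
and show that $\mathcal{N}$ maps the ball $\mathcal{B}_R:=\{g\in\mathcal{X}_{T_*,k,j}: \|g\|_{\mathcal{X}_{T_*,k,j}}\le R\}$ into itself and is a contraction there, for $R\sim \|w_kg_0\|_{L^\infty_{x,v}}+\|w_jg_0\|_{L^\infty_vL^2_x}$ and $T_*$ small depending on $R$ (equivalently on $\ep_1$).

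The key estimates are: (i) $\|w_k\mathcal{N}g(t)\|_{L^\infty_{x,v}}$ — the free-streaming term is bounded by $\|w_kg_0\|_{L^\infty_{x,v}}$ since $w_k(v)$ is $v$-only and $e^{-\nu t}\le1$; the $\CK$-term is controlled by splitting $\CK=\CK_s+\mu^{1/2}\CK_b$ and using Lemma \ref{leKs} to get, for $|v|>M$, a factor $(\tfrac{C}{k^{(\ga+3)/4}}+\tfrac{C_k}{M^2})\int_0^t e^{-\nu(v)(t-s)}\nu(v)\,ds\le(\tfrac{C}{k^{(\ga+3)/4}}+\tfrac{C_k}{M^2})\|w_kg\|_{L^\infty}$ and for $|v|\le M$ a crude bound $C_{k,M}\,t\,\|w_kg\|_{L^\infty}$; choosing $k_0$ large and $M$ large makes the first contribution a small fraction, and the second is $O(T_*)$. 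The $Q$-term is quadratic: using $|w_k Q(g,g)|\lesssim \nu(v)\|w_kg\|_{L^\infty_v}^2$ (a pointwise weighted bound following from \eqref{controlQ}/\eqref{controlQ2} applied to $Q_+$ and $\nu(v)|g(v)|\cdot\sup|w_kg|$ for $Q_-$) and then $\int_0^t e^{-\nu(v)(t-s)}\nu(v)\,ds\le1$, this is $\lesssim R^2$; again the smallness of $R$ (hence of $\ep_1$) makes it a small fraction of $R$. (ii) $\|w_j\mathcal{N}g(t)\|_{L^\infty_vL^2_x}$ — here one takes the $L^2_x$ norm, uses that the flow $x\mapsto x-v(t-s)$ is measure-preserving on $\R^3_x$ so $\|h(x-v(t-s),v)\|_{L^2_x}=\|h(\cdot,v)\|_{L^2_x}$, then applies Minkowski's integral inequality in $s$, the same $\CK$ and $Q$ bounds now in $L^\infty_vL^2_x$ (for $Q$, $\|w_j Q(g,g)(s)\|_{L^\infty_vL^2_x}\lesssim \nu(v)\|w_kg\|_{L^\infty_{x,v}}\|w_jg\|_{L^\infty_vL^2_x}$ so the quadratic term is $\lesssim R^2$), and the $\nu$-convolution estimate. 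Summing (i) and (ii) and choosing first $k_0$, then $M$, then $T_*$ and $\ep_1$ small yields $\|\mathcal{N}g\|_{\mathcal{X}_{T_*,k,j}}\le R$, in fact the sharper \eqref{LE} with constant $2$.

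For the contraction, the difference $\mathcal{N}g-\mathcal{N}\tilde g$ has no free term, and $Q(g,g)-Q(\tilde g,\tilde g)=Q(g-\tilde g,g)+Q(\tilde g,g-\tilde g)$ is linear in $g-\tilde g$ with coefficients of size $R$; combined with the small/$O(T_*)$ bound on the $\CK$-difference, one gets $\|\mathcal{N}g-\mathcal{N}\tilde g\|_{\mathcal{X}_{T_*,k,j}}\le\tfrac12\|g-\tilde g\|_{\mathcal{X}_{T_*,k,j}}$, so Banach's fixed point theorem gives existence and uniqueness in $\mathcal{B}_R$; uniqueness in the full space $\mathcal{X}_{T_*,k,j}$ follows by a Gronwall argument on any two solutions. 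Finally, nonnegativity $F=\mu+g\ge0$ is obtained not directly from the fixed point but by the usual iteration scheme: define $F^{n+1}$ by the linear mild equation $\pa_t F^{n+1}+v\cdot\na_x F^{n+1}+\nu_{F^n} F^{n+1}=Q_+(F^n,F^{n+1})$-type splitting with loss term having the gain of $F^n$ as source, where $\nu_{F^n}(v)=\int B(v-u,\theta)F^n(u)\,d\omega du\ge0$ for $F^n\ge0$, so each $F^{n+1}\ge0$ by the mild representation (all terms manifestly nonnegative when $F^n\ge0$ and $F_0\ge0$); one checks this iteration converges in $\mathcal{X}_{T_*,k,j}$ to the same limit as the contraction (or that its limit solves \eqref{mild}), passing nonnegativity to $F$. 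The main obstacle is bookkeeping the order of parameter choices — the smallness needed for $\CK_s$ comes from $k_0$ and $M$ being large (uniformly in $T_*$), whereas the $|v|\le M$ part of $\CK$ and all quadratic terms are made small only by taking $T_*$ (and $\ep_1$) small afterwards; getting a clean statement requires fixing $k_0$ and $M$ first, purely in terms of the structural constants, and only then choosing $T_*,\ep_1$.
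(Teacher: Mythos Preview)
Your proposal is essentially correct, but it is organized differently from the paper's proof, and the comparison is worth spelling out.

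The paper does not run a Banach fixed point on the map $\mathcal{N}$ with the full $Q(g,g)$ on the right. Instead it builds from the start the iteration
\[
\pa_t g^{n+1}+v\cdot\nabla_x g^{n+1}+\nu g^{n+1}+Q_-(g^n,g^{n+1})=\CK g^n+Q_+(g^n,g^n),
\]
i.e.\ the loss term is absorbed into the transport damping so that, at the level of $F^{n+1}=\mu+g^{n+1}$, the mild form reads
$F^{n+1}(t)=e^{-\int_0^t\zeta^n}F_0+\int_0^t e^{-\int_s^t\zeta^n}Q_+(F^n,F^n)\,ds$
with $\zeta^n=\int B\,F^n(u)\,d\omega du\ge0$. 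This single iteration simultaneously delivers nonnegativity and convergence (uniform bounds by induction, then Cauchy), so there is no need for a separate contraction map plus a second iteration as you propose. In particular your description of the nonnegativity scheme is slightly off: the source is $Q_+(F^n,F^n)$, not $Q_+(F^n,F^{n+1})$.

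A second structural difference is the treatment of hard potentials. You keep $Q_-$ on the right and rely on the universal bound $\int_0^t e^{-\nu(v)(t-s)}\nu(v)\,ds\le1$, which cleanly handles the growth of $\nu$. The paper, having moved $Q_-(g^n,g^{n+1})$ to the left, must first show $\zeta^n\ge C\nu$ (this is where the smallness $\|w_kg_0\|_{L^\infty}<\ep_1$ is really used in the hard case) and then runs the Cauchy argument in the \emph{weaker} weight $w_{k-2\gamma}$ to compensate for the extra $\nu(v)$ produced by differencing the exponentials $e^{-\int\zeta^{n+1}}-e^{-\int\zeta^n}$. Your route avoids both the $\zeta^n$ lower bound and the weight-loss trick, at the cost of needing $\ep_1$ small in both hard and soft cases (the paper only genuinely needs it for hard potentials, though the theorem statement imposes it uniformly).

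Finally, the paper does not use the $\CK_s/\CK_b$ splitting of Section~4 here; it works directly with the two-term bound \eqref{controlK} (a $C/k^{(\gamma+3)/4}$ piece plus a $C_k(1+|v|)^{-2}$ piece), which plays the same role as your $|v|>M$ versus $|v|\le M$ split. Both mechanisms are equivalent; your version is just a repackaging of Lemma~\ref{leQ} via Lemma~\ref{leKs}.
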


\begin{proof}
	We first rewrite \eqref{rbe} as
	\begin{align*}%\label{rrbe}
		&\dis\pa_t g+v\cdot \nabla_x g +\nu g+Q_-(g,g)=\CK g+Q_+(g,g) ,  \\
		 &\dis g(0,x,v)=g_0(x,v)=F_0(x,v)-\mu(v),
	\end{align*}
where $\nu$, $\CK$, $Q_-$ and $Q_+$ are defined in \eqref{Defnu}, \eqref{DefCK} and \eqref{defQ}.
Then we construct the approximation sequence $\{g^n\}_{n=0}^\infty$ as in \cite{DHWY,Li}:
\begin{align*}
	\pa_t g^{n+1}+v\cdot \nabla_x g^{n+1} +\nu g^{n+1}+Q_-(g^{n},g^{n+1})=\CK g^{n}+Q_+(g^{n},g^{n}) ,
\end{align*}
with  $g^{n+1}(0,x,v)=g_0(x,v)$ and $g^0(t,x,v)=0$. It is noted that
\begin{align}
	\dis g^{n+1}(t,x,v)=&e^{-{\int^t_0\zeta^n(\tau,x-v(t-\tau),v)d\tau}}g_0(x-vt,v) \notag\\
	&+\int_0^t e^{-{\int^t_s\zeta^n(\tau,x-v(t-\tau),v)d\tau}}(\CK g^n)(s,x-v(t-s),v)ds \notag\\
	&+\int_0^t e^{-{\int^t_s\zeta^n(\tau,x-v(t-\tau),v)d\tau}}Q_+(g^n,g^n)(s,x-v(t-s),v)ds, \label{appro}
\end{align}
where
\begin{align*}%\label{defzeta}
\zeta^n(\tau,y,v)=\int_{\R^3}\int_{\S^2}B(v-u,\theta)\left[ \mu(u)+g^n (\tau,y,u) \right]d\omega du.
\end{align*}
 Defining $F^n=\mu+g^n$, we have that \eqref{appro} is equivalent to
\begin{align*}
	\dis F^{n+1}(t,x,v)=&e^{-{\int^t_0\zeta^n(\tau,x-v(t-\tau),v)d\tau}}F_0(x-vt,v) \notag\\
	&+\int_0^t e^{-{\int^t_s\zeta^n(\tau,x-v(t-\tau),v)d\tau}}Q_+(F^n,F^n)(s,x-v(t-s),v)ds.
\end{align*}
Then it is direct to see that the nonnegativity $F^n\geq0$ follows by induction.

We prove the bound \eqref{LE} for the approximation sequence $\{g^n\}^\infty_{n=0}$ in case of soft and hard potentials separately. First we consider $-3<\ga<0$. We are going to show that there is $T_*>0$ such that for any $n\geq 0$,
\begin{align}
&\sup_{0\leq s\leq T_*}\|w_kg^{n}(s)\|_{L^\infty_{x,v}}\leq 2\|w_kg_0\|_{L^\infty_{x,v}},\label{le.s.inftybd}\\
&\sup_{0\leq s\leq T_*}\|w_jg^{n}(s)\|_{L^\infty_vL^2_x}\leq 2\|w_jg_0\|_{L^\infty_vL^2_x}\label{le.s.L2bd}.
\end{align}
We prove it by induction. It is obvious to hold for $n=0$, since $g^0\equiv 0$. We assume \eqref{le.s.inftybd} and \eqref{le.s.L2bd} for $n\geq 0$.
By \eqref{appro} and the fact that $\zeta^n(\tau,y,v)=\int_{\R^3}\int_{\S^2}B(v-u,\theta)F^n (\tau,y,u) d\omega du\geq 0$, a direct calculation shows that
\begin{align*}
	\dis \left|w_k(v) g^{n+1}(t,x,v)\right|\leq &\left|w_k(v) g_0(x-vt,v)\right|+\int_0^t \left|w_k(v)(\CK g^n)(s,x-v(t-s),v)\right|ds\notag \\
	&+\int_0^t \left|w_k(v)Q_+(g^n,g^n)(s,x-v(t-s),v)\right|ds.
\end{align*}
Applying the definitions of $\CK$ in \eqref{DefCK} and $Q_+$ in \eqref{defQ} yields
\begin{align}\label{conseq}
	&\dis \left|w_k(v) g^{n+1}(t,x,v)\right|\notag\\
	\leq &\left|w_k(v) g_0(x-vt,v)\right|\notag \\+C\int_0^t &\int_{\R^3}\int_{\S^2}\|w_k g^n(s)\|_{L^\infty_{x,v}}|v-u|^\ga\notag \\
	&\qquad\times\big(\frac{w_k(v)}{w_k(v')}e^{-\frac{|u'|^2}{2}}+\frac{w_k(v)}{w_k(u')}e^{-\frac{|v'|^2}{2}}+\frac{w_k(v)}{w_k(u)}e^{-\frac{|v|^2}{2}}\big)d\omega duds\notag \\
	&+C\int_0^t \int_{\R^3}\int_{\S^2}\|w_k g^n(s)\|_{L^\infty_{x,v}}^2|v-u|^\ga\frac{w_k(v)}{w_k(v')w_k(u')}d\omega duds.
\end{align}
Then it follows from \eqref{controlK}, \eqref{controlQ} and \eqref{conseq} that for $0\leq t\leq T_*$ and $k\geq k_0>3$,
\begin{align}\label{boundsoftn}
	&\dis \left|w_k(v) g^{n+1}(t,x,v)\right|\notag\\
	\leq &\|w_kg_0\|_{L^\infty_{x,v}}+C_k\int_0^t \nu(v)\big(\|w_k g^n(s)\|_{L^\infty_{x,v}}+\|w_k g^n(s)\|_{L^\infty_{x,v}}^2\big)ds\notag\\
	\leq &\|w_kg_0\|_{L^\infty_{x,v}}+C_kT_*\big(\sup_{0\leq s\leq T_*}\|w_k g^n(s)\|_{L^\infty_{x,v}}+\sup_{0\leq s\leq T_*}\|w_k g^n(s)\|_{L^\infty_{x,v}}^2\big).
\end{align}
Using the assumption \eqref{le.s.inftybd}, one gets
\begin{align}\label{boundsoft}
	\dis \left|w_k(v) g^{n+1}(t,x,v)\right|\leq \|w_kg_0\|_{L^\infty_{x,v}}+C_kT_*\big(\|w_k g_0\|_{L^\infty_{x,v}}+\|w_k g_0\|_{L^\infty_{x,v}}^2\big).
\end{align}
Similarly, for $k\geq j\geq k_0>3$,
 \begin{align}\label{localL2}
	&\dis \left\|w_j g^{n+1}(t)\right\|_{L^\infty_vL^2_x}\notag \\
	\leq &\left\|w_j g_0\right\|_{L^\infty_vL^2_x}+\int_0^t \left\|w_j(\CK g^n)(s)\right\|_{L^\infty_vL^2_x}ds\notag \\
	&+\int_0^t \left\|w_jQ_+(g^n,g^n)(s)\right\|_{L^\infty_vL^2_x}ds\notag\\
	\leq &\|w_jg_0\|_{L^\infty_vL^2_x}+C_jT_*\sup_{0\leq s\leq T_*}\|w_j g^n(s)\|_{L^\infty_vL^2_x}\big(1+\sup_{0\leq s\leq T_*}\|w_k g^n(s)\|_{L^\infty_{x,v}}\big)\notag\\
	\leq &\|w_jg_0\|_{L^\infty_vL^2_x}+C_jT_*\|w_j g_0\|_{L^\infty_vL^2_x}\big(1+\|w_k g_0\|_{L^\infty_{x,v}}\big).
\end{align}
In the second inequality above, we use \eqref{controlQ} and the Minkowski inequality that
\begin{align*}
\|w_jQ_+(g^n,g^n)(s,v)\|_{L^2_x}&\leq \int_{\R^3}\int_{\S^2}|v-u|^\ga\frac{w_j(v)}{w_j(v')w_j(u')}\|w_j(v) g^n(s,v)\|_{L^2_x}\|w_j(v) g^n(s,v)\|_{L^\infty_x}d\omega du\notag\\
&\leq C_j\nu(v)\sup_{0\leq s\leq T_*}\|w_j g^n(s)\|_{L^\infty_vL^2_x}\sup_{0\leq s\leq T_*}\|w_k g^n(s)\|_{L^\infty_{x,v}}.
\end{align*}
Let
\begin{align*}
	\dis T_*=\frac{1}{6(C_k+C_j)(1+\|w_k g_0\|_{L^\infty_{x,v}})}>0,
\end{align*}
then it follows from \eqref{boundsoft} and \eqref{localL2} that \eqref{le.s.inftybd} and \eqref{le.s.L2bd} are satisfied for $n+1$.
%\begin{align*}
%	&\sup_{0\leq s\leq T_*}\|w_kg^{n+1}(s)\|_{L^\infty_{x,v}}\leq 2\|w_kg_0\|_{L^\infty_{x,v}},\quad
%	\sup_{0\leq s\leq T_*}\|w_jg^{n+1}(s)\|_{L^\infty_vL^2_x}\leq 2\|w_jg_0\|_{L^\infty_vL^2_x}.
%\end{align*}
Hence, \eqref{le.s.inftybd} and \eqref{le.s.L2bd} are proved for all $n\geq 0$.

Next, we prove that the constructed sequence $\{g^n\}_{n=0}^\infty$ is a Cauchy sequence in the $w_k$-weighted norm of $L^\infty_{x,v}$. By taking the difference and using similar arguments as  in \cite[Section 3]{Li}, one can obtain that
\begin{align*}
	&\dis \sup_{0\leq s\leq T_*}\left\|w_k g^{n+2}(s)-w_k g^{n+1}(s)\right\|_{L^\infty_{x,v}}\\
	&\leq CT_*(1+\|w_kg_0\|_{L^\infty_{x,v}})\sup_{0\leq s\leq T_*}\left\|w_k g^{n+1}-w_k g^{n}\right\|_{L^\infty_{x,v}}\notag\\
	&\leq \frac{C}{6C_k}\sup_{0\leq s\leq T_*}\left\|w_k g^{n+1}-w_k g^{n}\right\|_{L^\infty_{x,v}},
\end{align*}
for some $C>0$. Then we can choose $C_k$ to be large enough such that
\begin{align}\label{Cauchysoft}
	\dis \sup_{0\leq s\leq T_*}\left\|w_k g^{n+2}(s)-w_k g^{n+1}(s)\right\|_{L^\infty_{x,v}}&\leq \frac{1}{2}\sup_{0\leq s\leq T_*}\left\|w_k g^{n+1}-w_k g^{n}\right\|_{L^\infty_{x,v}}.
\end{align}
One may refer to Section 3 in \cite{Li} and Appendix in \cite{DHWY} for more details. Hence, we have proved that $\{g^n\}_{n=0}^\infty$ is a Cauchy sequence in the $w_k$-weighted norm of $L^\infty_{x,v}$. We take the limit  $n\to \infty$ to obtain a local mild solution $g\in \mathcal{X}_{T_*,k,j}$ satisfying \eqref{LE} in terms of the uniform estimates \eqref{le.s.inftybd} and \eqref{le.s.L2bd}. Recall $F^n\geq 0$ for any $n\geq 0$, so the limiting function $F=\mu+g$ is also nonnegative.
%Furthermore, \eqref{LE} holds from \eqref{le.s.inftybd} and \eqref{le.s.L2bd}.
For the uniqueness, let $g_1,g_2\in \mathcal{X}_{T_*,k,j}$ be two solutions to \eqref{rbe} satisfying \eqref{LE}.
%$\sup_{0\leq s\leq T_*}\|w_kg_i(s)\|_{L^\infty_{x,v}}\leq 2\|w_kg_0\|_{L^\infty_{x,v}}$ and $\sup_{0\leq s\leq T_*}\|w_jg_i(s)\|_{L^\infty_vL^2_x}\leq 2\|w_jg_0\|_{L^\infty_vL^2_x}$ for $i=1,2$.
Then, following the similar arguments for deducing \eqref{Cauchysoft}, it is direct to prove $g_1=g_2$. Thus, the case of soft potentials is proved.

For hard potentials $0\leq\ga\leq1$, we need to modify the proof above since the term $\nu(v)$ in \eqref{boundsoft} is unbounded when $v$ is large. As before, for some $T_*>0$ which will be determined later, we prove \eqref{le.s.inftybd} and \eqref{le.s.L2bd} for any $n\geq 0$ by induction. For the case that $n=0$, the above estimates hold naturally since $g^0\equiv 0$. We assume \eqref{le.s.inftybd} and \eqref{le.s.L2bd} for $n\geq 0$. Here, due to the difficulty caused by the nonlinear term, we make an additional assumption that $\|w_kg_0\|_{L^\infty_{x,v}}<\ep_1$. Recalling the definition of $\zeta$ that $\zeta^n(\tau,y,v)=\int_{\R^3}\int_{\S^2}B(v-u,\theta)\left[ \mu(u)+g^n (\tau,y,u) \right]d\omega du$, in order to overcome the unboundedness of $\nu(v)$, we start to deduce that $\zeta$ behaves like $\nu(v)$ up to a small positive time. We consider $0\leq \tau\leq T_*$. A direct calculation shows that
\begin{align}\label{zeta1}
	\zeta^n(\tau,y,v)&=\int_{\R^3}\int_{\S^2}B(v-u,\theta)\left[ \mu(u)+g^n (\tau,y,u) \right]d\omega du\notag\\&\geq \nu(v)-C\|w_k g^n(\tau)\|_{L^\infty_{x,v}}\int_{\R^3}\int_{\S^2}|v-u|^\ga(1+|u|)^{-k}d\om du.	
\end{align}
For $k\geq k_0>3$, it follows from \eqref{zeta1} that
\begin{align*}%\label{zeta2}
	\zeta^n(\tau,y,v)&\geq \nu(v)(1-C_k\sup_{0\leq s\leq T_*}\|w_kg^n(s)\|_{L^\infty_{x,v}}),	
\end{align*}
where $C_k$ depends only on $k$. Recalling $\sup_{0\leq s\leq T_*}\|w_k g^n(s)\|_{L^\infty_{x,v}}\leq 2\|w_kg_0\|_{L^\infty_{x,v}}$, we have
\begin{align*}%\label{zeta2}
	\zeta^n(\tau,y,v)&\geq \nu(v)(1-2C_k\|w_kg_0\|_{L^\infty_{x,v}}).	
\end{align*}
Letting $$\ep_1<\min\{1,\frac{1}{4C_k}\},$$ and defining $$T_1=\frac{1}{6C_k(1+\|w_k f_0\|_{L^\infty_{x,v}})},$$ then for $\|w_kg_0\|_{L^\infty_{x,v}}<\ep_1$ and $0\leq \tau\leq  T_*\leq T_1$,  one gets
\begin{align}\label{zeta}
	\zeta^n(\tau,y,v)&\geq \frac{1}{2}\nu(v).	
\end{align}
From \eqref{appro} and \eqref{zeta}, for $0\leq t\leq T_*\leq T_1$, it follows that
\begin{align}\label{boundhard}
	\dis \left|w_k(v) g^{n+1}(t,x,v)\right|\leq &\left|w_k(v) g_0(x-vt,v)\right|\notag\\
	&+\int_0^t e^{-\frac{1}{2}\nu(v)(t-s)}\left|w_k(v)(\CK g^n)(s,x-v(t-s),v)\right|ds\notag \\
	&+\int_0^te^{-\frac{1}{2}\nu(v)(t-s)} \left|w_k(v)Q_+(g^n,g^n)(s,x-v(t-s),v)\right|ds.
\end{align}
The first term on the right hand side above is directly controlled by
\begin{align}\label{local1}
	\left|w_k(v) g_0(x-vt,v)\right|\leq \|w_k g_0\|_{L^\infty_{x,v}}.
\end{align}
For the second term, for some $M>0$ to be determined later, we split it into two parts such that
\begin{align*}
	&\int_0^t e^{-\frac{1}{2}\nu(v)(t-s)}\left|w_k(v)(\CK g^n)(s,x-v(t-s),v)\right|ds\notag\\
	&=\int_0^t (\chi_{\{|v|\geq M\}}+\chi_{\{|v|< M\}}) e^{-\frac{1}{2}\nu(v)(t-s)}\left|w_k(v)(\CK g^n)(s,x-v(t-s),v)\right|ds.
\end{align*}
The part of large $v$ such that $|v|\geq M$ is bounded as
\begin{align*}
	&\int_0^t \chi_{\{|v|\geq M\}} e^{-\frac{1}{2}\nu(v)(t-s)}\left|w_k(v)(\CK g^n)(s,x-v(t-s),v)\right|ds\notag\\
	&\leq \left(\frac{C}{k^{\frac{\ga+3}{4}}}+\frac{C_k}{M^2}\right)	\int_0^t e^{-\frac{1}{2}\nu(v)(t-s)}\nu(v)ds	\sup_{0\leq s\leq T_*}\|w_kg^n(s)\|_{L^\infty_{x,v}}\notag\\
	&\leq \left(\frac{C}{k^{\frac{\ga+3}{4}}}+\frac{C_k}{M^2}\right)	\sup_{0\leq s\leq T_*}\|w_kg^n(s)\|_{L^\infty_{x,v}}.
\end{align*} 
For the other part over $|v|<M$, it holds from the boundedness of $v$ that
\begin{align*}
	&\int_0^t \chi_{\{|v|< M\}} e^{-\frac{1}{2}\nu(v)(t-s)}\left|w_k(v)(\CK g^n)(s,x-v(t-s),v)\right|ds\notag\\
	&\leq C_{k,M}\int_0^t e^{-\frac{1}{2}\nu(v)(t-s)}ds	\sup_{0\leq s\leq T_*}\|w_kg^n(s)\|_{L^\infty_{x,v}}\notag\\
	&\leq C_{k,M}T_*	\sup_{0\leq s\leq T_*}\|w_kg^n(s)\|_{L^\infty_{x,v}}.
\end{align*} 
The combination of the above two estimates gives
\begin{align}\label{local2}
	&\int_0^t e^{-\frac{1}{2}\nu(v)(t-s)}\left|w_k(v)(\CK g^n)(s,x-v(t-s),v)\right|ds\notag\\
	&\leq \left(\frac{C}{k^{\frac{\ga+3}{4}}}+\frac{C_k}{M^2}+C_{k,M}T_*\right)	\sup_{0\leq s\leq T_*}\|w_kg^n(s)\|_{L^\infty_{x,v}}.
\end{align}
For the last term on the right hand side of \eqref{boundhard}, it follows from \eqref{defQ} and \eqref{controlQ} that
\begin{align}\label{local3}
	&\int_0^te^{-\frac{1}{2}\nu(v)(t-s)} \left|w_k(v)Q_+(g^n,g^n)(s,x-v(t-s),v)\right|ds\notag\\
	&\leq C_k\int_0^te^{-\frac{1}{2}\nu(v)(t-s)}|v-u|^\ga\frac{w_k(v)}{w_k(v')w_k(u')}duds\sup_{0\leq s\leq T_*}\|w_k g^n(s)\|_{L^\infty_{x,v}}^2\notag\\
	&\leq C_k\int_0^te^{-\frac{1}{2}\nu(v)(t-s)}\nu(v)ds\sup_{0\leq s\leq T_*}\|w_k g^n(s)\|_{L^\infty_{x,v}}^2\notag\\
	&\leq C_k\sup_{0\leq s\leq T_*}\|w_k g^n(s)\|_{L^\infty_{x,v}}^2.
\end{align}
We collect \eqref{boundhard}, \eqref{local1}, \eqref{local2} and \eqref{local3} to obtain that
\begin{align}\label{boundseqhard}
	\dis \sup_{0\leq s\leq T_*}\|w_k g^{n+1}(s)\|_{L^\infty_{x,v}}\leq &\|w_k g_0\|_{L^\infty_{x,v}}+\left(\frac{C_1}{k^{\frac{\ga+3}{4}}}+\frac{C_k}{M^2}+C_{k,M}T_*\right)	\sup_{0\leq s\leq T_*}\|w_kg^n(s)\|_{L^\infty_{x,v}}\notag\\
	&+C_k\sup_{0\leq s\leq T_*}\|w_k g^n(s)\|_{L^\infty_{x,v}}^2,
\end{align}
where $C_1>0$ is a generic constant.
Similarly for obtaining \eqref{boundseqhard} from \eqref{local1}, \eqref{local2} and \eqref{local3}, one has \begin{align}\label{boundseqhardL2}
	\dis \|w_j g^{n+1}(t)\|_{L^\infty_vL^2_x}\leq &\|w_j g_0\|_{L^\infty_vL^2_x}+\left(\frac{C_1}{j^{\frac{\ga+3}{4}}}+\frac{C_j}{M^2}+C_{j,M}T_*\right)	\sup_{0\leq s\leq T_*}\|w_jg^n(s)\|_{L^\infty_vL^2_x}\notag\\
	&+C_j\sup_{0\leq s\leq T_*}\|w_k g^n(s)\|_{L^\infty_{x,v}}\|w_j g^n(s)\|_{L^\infty_vL^2_x}.
\end{align}
Based on the estimates above, we choose $k_0$ to be large enough such that $k\geq j\geq k_0>(4C_1)^\frac{2}{\ga+3}$ to obtain
$$
\frac{C_1}{k^{\frac{\ga+3}{4}}}< \frac{1}{4},\quad \frac{C_1}{j^{\frac{\ga+3}{4}}}< \frac{1}{4}.
$$
For such fixed $k$ and $j$, we further let $M$ be large enough such that
$$
\frac{C_k}{M^2}\leq \frac{1}{4},\quad \frac{C_j}{M^2}\leq \frac{1}{4}.
$$
Then, using the induction assumption \eqref{le.s.inftybd} and \eqref{le.s.L2bd}, it follows from \eqref{boundseqhard} and \eqref{boundseqhardL2} that 
\begin{align}\label{smalllinf}
	\dis \sup_{0\leq s\leq T_*}\|w_k g^{n+1}(s)\|_{L^\infty_{x,v}}\leq &\|w_k g_0\|_{L^\infty_{x,v}}+\left(\frac{1}{2}+{C}_{k,M}T_*+{C}_k\|w_k g_0\|_{L^\infty_{x,v}}\right)	\|w_k g_0\|_{L^\infty_{x,v}},
\end{align}
and \begin{align}\label{smalll2}
	\dis \sup_{0\leq s\leq T_*}\|w_j g^{n+1}(s)\|_{L^\infty_vL^2_x}\leq &\|w_j g_0\|_{L^\infty_vL^2_x}+\left(\frac{1}{2}+{C}_{j,M}T_*+{C}_j\|w_k g_0\|_{L^\infty_{x,v}}\right)	\|w_j g_0\|_{L^\infty_vL^2_x}.
\end{align}
%where $\overline{C}_k$ and $\overline{C}_j$ are constants that depend only on $k$ and $j$ respectively.
Furthermore, for fixed $k,j$ and $M$ as given above, we define \begin{align}\label{defT*}
	\dis T_*:=\min\{1,T_1,\frac{1}{4(1+{C}_{k,M}+{C}_{j,M})}\},
\end{align}
and let $\eps_1$ be small enough such that
$$
({C}_k+{C}_j)\|w_k g_0\|_{L^\infty_{x,v}}\leq ({C}_k+{C}_j)\eps_1\leq \frac{1}{4}.
$$
It then follows from \eqref{smalllinf} and \eqref{smalll2} that
$$
\sup_{0\leq s\leq T_*}\|w_kg^{n+1}(s)\|_{L^\infty_{x,v}}\leq 2\|w_kg_0\|_{L^\infty_{x,v}},
$$ 
and 
$$
\sup_{0\leq s\leq T_*}\|w_jg^{n+1}(s)\|_{L^\infty_vL^2_x}\leq 2\|w_jg_0\|_{L^\infty_vL^2_x},
$$ 
which give the same bounds as in \eqref{le.s.inftybd} and \eqref{le.s.L2bd} for $n+1$. Therefore,  \eqref{le.s.inftybd} and \eqref{le.s.L2bd} are satisfied for all $n\geq 0$ by induction.

Next, we prove that $\{g^n\}_{n=0}^\infty$  is a Cauchy sequence in the velocity weighted $L^\infty_{x,v}$ norm. Due to the growth of $\nu(v)$ in large velocities, we choose $k_0$ large enough such that  $k-2\ga\geq k_0-2\ga>0$ and consider the sequence $\{w_{k-2\ga}g^n\}_{n=0}^\infty$ instead of $\{w_{k}g^n\}_{n=0}^\infty$. Recalling \eqref{appro} and taking the difference, it is straightforward to get
 \begin{equation}\label{diffseq}
 	\dis |w_{k-2\ga}(v)(g^{n+2}-g^{n+1})(t,x,v)|\leq I_0(t,x,v)+I_1(t,x,v)+I_2(t,x,v)+I_3(t,x,v),
 \end{equation}
 where
 \begin{equation*}
 	I_0(t,x,v)=\left| e^{-{\int^t_0\zeta^{n+1}(\tau,x-v(t-\tau),v)d\tau}}-e^{-{\int^t_0\zeta^n(\tau,x-v(t-\tau),v)d\tau}}  \right|
	\cdot|w_{k-2\ga}(v) g_0(x-vt,v)|,
 \end{equation*}
 \begin{align*}
 	I_1(t,x,v)&=\int_0^t \left| e^{-{\int^t_s\zeta^{n+1}(\tau,x-v(t-\tau),v)d\tau}}-e^{-{\int^t_s\zeta^n(\tau,x-v(t-\tau),v)d\tau}}  \right|\notag\\
 	&\qquad\cdot\left(w_{k-2\ga}(v)|Kg^{n+1}(s,x-v(t-s),v)|\right.\notag\\
 	&\qquad\qquad\left.+w_{k-2\ga}(v)|Q_+(g^{n+1},g^{n+1})(s,x-v(t-s),v)|\right)ds,
 \end{align*}
  \begin{equation*}
 	I_2(t,x,v)=\int_0^t e^{-{\int^t_0\zeta^n(\tau,x-v(t-\tau),v)d\tau}} w_{k-2\ga}(v)
	\left|\left(Kg^{n+1}-Kg^{n}\right)(s,x-v(t-s),v)\right| ds,
 \end{equation*}
and
  \begin{multline*}
	I_3(t,x,v)=\int_0^t e^{-{\int^t_0\zeta^n(\tau,x-v(t-\tau),v)d\tau}} w_{k-2\ga}(v)\\
	\cdot\left|\left(Q_+(g^{n+1},g^{n+1})-Q_+(g^{n},g^{n})\right)(s,x-v(t-s),v)\right| ds.
\end{multline*}
Using the fact that $|e^{-a}-e^{-b}|\leq|a-b|$ for any $a,b\geq0$, we have
\begin{align}\label{estdiffe1}
&\left|e^{-{\int^t_s\zeta^{n+1}(\tau,x-v(t-\tau),v)d\tau}}-e^{-{\int^t_s\zeta^n(\tau,x-v(t-\tau),v)d\tau}}\right|\notag\\
&\leq \int^t_0\left| \int_{\R^3}\int_{\S^2}B(v-u,\theta)\left[g^{n+1} (\tau,y,u)-g^n (\tau,y,u) \right]d\omega du\right|d\tau\notag\\
&\leq C\sup_{0\leq s\leq T_*}\|w_{k-2\ga} g^{n+1}(s)-w_{k-2\ga} g^{n}(s)\|_{L^\infty_{x,v}}\int^t_0\int_{\R^3}|v-u|^\ga w_{2\ga-k}(u)dud\tau.
\end{align}
We let $k_0>6$ so as to deduce
$$
\int_{\R^3}|v-u|^\ga w_{2\ga-k}(u)du\leq C\nu(v).
$$
Then combining the above inequality with \eqref{estdiffe1}, one gets
\begin{align}\label{estdiffe}
	&\left|e^{-{\int^t_s\zeta^{n+1}(\tau,x-v(t-\tau),v)d\tau}}-e^{-{\int^t_s\zeta^n(\tau,x-v(t-\tau),v)d\tau}}\right|\notag\\
	&\leq CT_*\nu(v)\sup_{0\leq s\leq T_*}\|w_{k-2\ga} g^{n+1}(s)-w_{k-2\ga} g^{n}(s)\|_{L^\infty_{x,v}}.
\end{align}
As a special case that $s=0$ in the left hand side of \eqref{estdiffe}, it is direct to see that
 \begin{align}\label{I0}
	I_0(t,x,v)&\leq CT_*\sup_{0\leq s\leq T_*}\|w_{k-2\ga} g^{n+1}(s)-w_{k-2\ga} g^{n}(s)\|_{L^\infty_{x,v}}\notag\\
	&\qquad\qquad\cdot|w_{k-2\ga}(v)\nu(v) g_0(x-vt,v)|\notag\\
	&\leq CT_*\sup_{0\leq s\leq T_*}\|w_{k-2\ga} g^{n+1}(s)-w_{k-2\ga} g^{n}(s)\|_{L^\infty_{x,v}}\|w_kg_0\|_{L^\infty_{x,v}}.
\end{align}
For $I_1(t,x,v)$, from \eqref{estdiffe} and our choice that $T_*\leq 1$ in \eqref{defT*}, we have that
\begin{multline*}
I_1(t,x,v)\leq C\sup_{0\leq s\leq T_*}\|w_{k-2\ga} g^{n+1}(s)-w_{k-2\ga} g^{n}(s)\|_{L^\infty_{x,v}}\\
\cdot\int_0^t \left( \frac{w_k(v)}{\nu(v)}|Kg^{n+1}(s,x-v(t-s),v)|\right.\\
\left.+ \frac{w_k(v)}{\nu(v)}|Q_+(g^{n+1},g^{n+1})(s,x-v(t-s),v)|\right)ds.
%\label{I1}
\end{multline*}
Using similar arguments for obtaining \eqref{conseq}, \eqref{boundsoftn} and \eqref{boundsoft}, it further holds by \eqref{controlK}, \eqref{controlQ} and \eqref{le.s.inftybd} that
 \begin{multline}\label{I1}
	I_1(t,x,v)\leq T_*C_{k}\sup_{0\leq s\leq T_*}\|w_{k-2\ga} g^{n+1}(s)-w_{k-2\ga} g^{n}(s)\|_{L^\infty_{x,v}}\\
	\cdot\big(\|w_k g_0(s)\|_{L^\infty_{x,v}}+\|w_k g_0(s)\|_{L^\infty_{x,v}}^2\big).
\end{multline}
A similar argument to derive \eqref{boundseqhard} shows that
 \begin{equation}\label{I2}
	I_2(t,x,v)\leq T_*C_{k}\sup_{0\leq s\leq T_*}\|w_{k-2\ga} g^{n+1}(s)-w_{k-2\ga} g^{n}(s)\|_{L^\infty_{x,v}}.
\end{equation}
Using the fact that
$$Q_+(g^{n+1},g^{n+1})=Q_+(g^{n+1}-g^{n},g^{n})+Q_+(g^{n+1},g^{n+1}-g^{n}),
$$
one gets from \eqref{zeta} and the definition of $Q_+$ in \eqref{defQ} that
\begin{align*}
	&I_3(t,x,v)\\
	&\leq \int_0^t e^{-C\nu(v)(t-s)}w_{k-2\ga}(v)(|Q_+(g^{n+1}-g^{n},g^{n})|+|Q_+(g^{n+1},g^{n+1}-g^{n})|)ds\notag\\
	&\leq C(\sup_{0\leq s\leq T_*}\|w_k g^n(s)\|_{L^\infty_{x,v}}+\sup_{0\leq s\leq T_*}\|w_k g^{n+1}(s)\|_{L^\infty_{x,v}})\\
	&\quad\cdot\sup_{0\leq s\leq T_*}\|w_{k-2\ga} (g^{n+1}- g^{n})(s)\|_{L^\infty_{x,v}}\notag\\
	&\quad\cdot\int_0^t\int_{\R^3}\int_{\S^2}e^{-C\nu(v)(t-s)}|v-u|^\ga\frac{w_{k-2\ga}(v)}{w_{k-2\ga}(v')w_{k-2\ga}(u')}d\om duds.
\end{align*}
By our choice that $k_0>6$, we get $k-2\ga>3+\ga$. Then it follows from \eqref{controlQ2} and \eqref{le.s.inftybd} that
\begin{equation}\label{I3}
	I_3(t,x,v)\leq C\left(T_*C_{k}+\frac{C}{k^{\frac{\ga+3}{4}}}\right)\|w_k g_0(s)\|_{L^\infty_{x,v}}\sup_{0\leq s\leq T_*}\|w_{k-2\ga} g^{n+1}(s)-w_{k-2\ga} g^{n}(s)\|_{L^\infty_{x,v}}.
\end{equation}
Combining \eqref{diffseq}, \eqref{I0}, \eqref{I1}, \eqref{I2} and \eqref{I3}, and using the condition \eqref{thm.l.as} with $\ep\leq 1$, one has
\begin{equation*}
	\dis |w_{k-2\ga}(v)(g^{n+2}-g^{n+1})(t,x,v)|
	\leq \left(T_*C_{k}+\frac{C}{k^{\frac{\ga+3}{4}}}\right)\sup_{0\leq s\leq T_*}\|w_{k-2\ga} g^{n+1}(s)-w_{k-2\ga} g^{n}(s)\|_{L^\infty_{x,v}},
\end{equation*}
where $C_k$ depends only on $k$ and $C$ is independent of $k$ and $j$.
We may choose $C_1$ in \eqref{boundseqhard} so large that
\begin{equation*}
	\dis |w_{k-2\ga}(v)(g^{n+2}-g^{n+1})(t,x,v)|
	\leq \left(T_*C_{k}+\frac{C_1}{k^{\frac{\ga+3}{4}}}\right)\sup_{0\leq s\leq T_*}\|w_{k-2\ga} g^{n+1}(s)-w_{k-2\ga}g^{n}(s)\|_{L^\infty_{x,v}}.
\end{equation*}
Then by our choice that $k_0>(4C_1)^\frac{2}{\ga+3}$, we can deduce from the definition of $T_*$ in \eqref{defT*} that for $0\leq t\leq T_*$,
\begin{equation}\label{Cauchyhard}
	\dis \|w_{k-2\ga}g^{n+2}(t)-w_{k-2\ga}g^{n+1}(t)\|_{L^\infty_{x,v}}\leq\frac{1}{2}\sup_{0\leq s\leq T_*}\|w_{k-2\ga} g^{n+1}(s)-w_{k-2\ga} g^{n}(s)\|_{L^\infty_{x,v}}.
\end{equation}
 Hence, $\{w_{k-2\ga}g^n\}^\infty_{n=0}$ is a Cauchy sequence in $L^\infty_{T_*,x,v}$. We take the limit to obtain a local mild solution.
Recalling $F^n\geq 0$ for any $n\geq 0$, so the limiting function $F=\mu+g$ is also nonnegative. Also \eqref{LE} follows by letting $n\rightarrow\infty$ in \eqref{le.s.inftybd} and \eqref{le.s.L2bd}. For the uniqueness, let $g_1,g_2\in \mathcal{X}_{T_*,k,j}$ be two solutions to \eqref{rbe} satisfying \eqref{thm.l.as}.
%, $\sup_{0\leq s\leq T_*}\|w_kg_i(s)\|_{L^\infty_{x,v}}\leq 2\|w_kg_0\|_{L^\infty_{x,v}}$ and $\sup_{0\leq s\leq T_*}\|w_jg_i(s)\|_{L^\infty_vL^2_x}\leq 2\|w_jg_0\|_{L^\infty_vL^2_x}$ for $i=1,2$.
Then, following the similar arguments for deriving \eqref{Cauchyhard}, it is straightforward to prove $g_1=g_2$.  We then conclude the proof of Theorem \ref{local}.
 \end{proof}

\section{Hard potential case}\label{sec4}

Motivated by \cite{DL-arma,DL-cmaa,DL-hard} for construction of polynomial tail solutions, we first resolve the problem \eqref{rbe} into a coupling system of $g_1=g_1(t,x,v)$ and $g_2=g_2(t,x,v)$ where $g_1$ and $g_2$ satisfy
\begin{align}
	\pa_t g_1+v\cdot\nabla_x g_1 +\nu g_1&=\CK_s g_1+Q(g_1+\sqrt{\mu}g_2,g_1+\sqrt{\mu}g_2), \label{g1} \\
	\pa_t g_2 +v\cdot\na_x g_2 +Lg_2 &=\CK_b g_1,  \label{g2}
\end{align}
with
\begin{equation}
	\label{initial}
	g_1(0,x,v)=g_{10}(x,v)=F_0(x,v)-\mu(v),\quad g_2(0,x,v)=g_{20}(x,v)=0.
\end{equation}
The linear operators $\CK_s$ and $\CK_b$ above are respectively defined by
\begin{align}\label{defKs}
	\CK_sg_1(t,x,v):=\chi_{\{|v|\geq M\}} \CK g_1(t,x,v),
\end{align}
and
\begin{align}\label{defKb}
	\CK_bg_1(t,x,v):=\chi_{\{|v|< M\}}\mu^{-1/2}(v) \CK g_1(t,x,v),
\end{align}
where for any set $E$, $\chi_{\{v\in E\}}=1$ if $v\in E$ and $\chi_{\{v\in E\}}=0$ otherwise. The constant $M$ which depends on $k$ will be chosen later in \eqref{chooseM}. By setting $g=g_1+\sqrt{\mu}g_2$ with $g_0=g_{10}+\sqrt{\mu}g_{20}$, it is straightforward to see  that $g$ is the solution to \eqref{rbe}. We now focus on the system \eqref{g1}, \eqref{g2} and \eqref{initial}.

\subsection{Estimates on $g_1$}
We first consider estimates on $g_1$ from the equation \eqref{g1}. These are related to the velocity weighted norms in $L^\infty_{x,v}$, $L^\infty_vL^2_x$ and $L^\infty_vL^1_x$.

\begin{lemma}\label{leg1hard}
Let $0\leq\gamma\leq 1$, then there is $k_0>0$ large enough such that for any $k\geq k_0$, there is $M>0$ for the decomposition $\CK=\CK_s+\sqrt{\mu}\CK_b$  such that the following estimates hold:
	\begin{align}
	\|w_kg_1(t)\|_{L^\infty_{x,v}}&\leq C(1+t)^{-\frac{3}{2}}\|w_kg_{10}\|_{L^\infty_{x,v}}\notag\\
	&\quad+C_k(1+t)^{-\frac{3}{2}}\sup_{0\leq s\leq t}\|(1+s)^\frac{3}{4}w_kg(s)\|^2_{L^\infty_{x,v}},\label{g1infty}
\end{align}
\begin{align}
\|w_{k}g_1(t)\|_{L^\infty_vL^2_x}&\leq C(1+t)^{-\frac{3}{2}}\|w_{k}g_{10}\|_{L^\infty_vL^2_x}\notag\\
	&\quad+C_k(1+t)^{-\frac{3}{2}}\left\{\sup_{0\leq s\leq t}\|(1+s)^\frac{3}{4}w_kg(s)\|^2_{L^\infty_{x,v}}\right.\notag\\
	&\qquad\qquad\qquad\qquad\qquad\left.+\sup_{0\leq s\leq t}\|(1+s)^\frac{3}{4}w_{k}g(s)\|^2_{L^\infty_vL^2_x}\right\},\label{g1L2}
\end{align}
and
\begin{align}
\|w_{k}g_1(t)\|_{L^\infty_vL^1_x}&\leq C(1+t)^{-\frac{3}{2}}\|w_{k}g_{10}\|_{L^\infty_vL^1_x}\notag\\
&\quad+C_k(1+t)^{-\frac{3}{2}}\sup_{0\leq s\leq t}\|(1+s)^\frac{3}{4}w_{k}g(s)\|^2_{L^\infty_vL^2_x}\label{g1L1},
\end{align}
where $C_k$ is a constant which may depend on $k$.
\end{lemma}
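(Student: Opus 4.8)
The plan is to work from the Duhamel (mild) form of \eqref{g1},
\[
g_1(t,x,v)=e^{-\nu(v)t}g_{10}(x-vt,v)+\int_0^t e^{-\nu(v)(t-s)}(\CK_s g_1)(s,x-v(t-s),v)\,ds+\int_0^t e^{-\nu(v)(t-s)}Q(g,g)(s,x-v(t-s),v)\,ds,
\]
where $g=g_1+\sqrt\mu g_2$ (so that the source in \eqref{g1} is exactly $\CK_s g_1+Q(g,g)$), and to estimate the three resulting pieces in each of the norms $\|w_k\cdot\|_{L^\infty_{x,v}}$, $\|w_k\cdot\|_{L^\infty_vL^2_x}$, $\|w_k\cdot\|_{L^\infty_vL^1_x}$ simultaneously. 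The free term is handled uniformly in $p\in\{1,2,\infty\}$: translation invariance of $\|\cdot\|_{L^p_x}$ under $x\mapsto x-vt$ and $\inf_v\nu(v)\geq\nu_0>0$ give $\|w_k e^{-\nu(v)t}g_{10}(\cdot-vt,v)\|_{L^\infty_vL^p_x}\leq e^{-\nu_0 t}\|w_kg_{10}\|_{L^\infty_vL^p_x}$, and since $e^{-\nu_0 t}\leq C(1+t)^{-3/2}$ this yields the first term on the right of \eqref{g1infty}--\eqref{g1L1} with a universal constant.

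For the source $\CK_s g_1=\chi_{\{|v|\geq M\}}\CK g_1$, the key input is Lemma \ref{leKs}. Since its proof only uses pointwise-in-$v$ bounds on positive kernel integrals, it transfers after pulling the $L^p_x$ norm inside the velocity integral by Minkowski's inequality: for $p\in\{1,2,\infty\}$,
\[
\chi_{\{|v|\geq M\}}w_k(v)\big\|(\CK g_1)(s,\cdot,v)\big\|_{L^p_x}\leq\Big(\tfrac{C}{k^{(\ga+3)/4}}+\tfrac{C_k}{M^2}\Big)\nu(v)\,\|w_kg_1(s)\|_{L^\infty_vL^p_x}.
\]
Inserting this into the Duhamel integral and applying \eqref{decay} with $r=3/2$ — this is where the growth of $\nu(v)$ is absorbed and the rate $(1+t)^{-3/2}$ is produced — bounds the source contribution by $\big(\tfrac{C}{k^{(\ga+3)/4}}+\tfrac{C_k}{M^2}\big)C(1+t)^{-3/2}\sup_{0\leq s\leq t}(1+s)^{3/2}\|w_kg_1(s)\|_{L^\infty_vL^p_x}$. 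This is absorbed into the left-hand side: we first fix $k$ so large that $C^2k^{-(\ga+3)/4}\leq 1/4$ and only then fix $M=M(k)$ so large that $CC_kM^{-2}\leq 1/4$; this is the choice of $M$ referred to in the statement (recorded later in \eqref{chooseM}). The absorption is legitimate because on any finite interval $[0,T]$ the quantity $\sup_{0\leq s\leq T}(1+s)^{3/2}\|w_kg_1(s)\|_{L^\infty_vL^p_x}$ is finite, by the local theory (Theorem \ref{local}) and boundedness of $(1+s)^{3/2}$ there.

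For the nonlinear term $Q(g,g)=Q_+(g,g)-Q_-(g,g)$ one uses the kernel bound \eqref{controlQ2} (valid for $0\leq\ga\leq1$, $k$ large) together with $\int_{\R^3}|v-u|^\ga(1+|u|)^{-k}\,du\leq C_k\nu(v)$ to get, pointwise in $(x,v)$, $w_k(v)|Q(g,g)(s,x,v)|\leq C_k\nu(v)\|w_kg(s)\|_{L^\infty_{x,v}}^2$; for the $L^2_x$ norm, Minkowski together with $\|fh\|_{L^2_x}\leq\|f\|_{L^\infty_x}\|h\|_{L^2_x}$ gives $w_k(v)\|Q(g,g)(s,\cdot,v)\|_{L^2_x}\leq C_k\nu(v)\|w_kg(s)\|_{L^\infty_{x,v}}\|w_kg(s)\|_{L^\infty_vL^2_x}$; for the $L^1_x$ norm, Cauchy--Schwarz in $x$ ($\|fh\|_{L^1_x}\leq\|f\|_{L^2_x}\|h\|_{L^2_x}$) gives $w_k(v)\|Q(g,g)(s,\cdot,v)\|_{L^1_x}\leq C_k\nu(v)\|w_kg(s)\|_{L^\infty_vL^2_x}^2$. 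Inserting each into the Duhamel integral, writing the quadratic factor as $(1+s)^{-3/2}\big((1+s)^{3/4}\|\cdot\|\big)^2$, and invoking \eqref{decay} with $r=3/2$ produces the last term of \eqref{g1infty}--\eqref{g1L1}, where for \eqref{g1L2} one splits the mixed product via $ab\leq\tfrac{1}{2}(a^2+b^2)$ into the two squares appearing there. Collecting the three contributions and carrying out the absorption gives the lemma.

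The step I expect to be the main obstacle is the smallness/absorption argument: the bound for $\CK_s g_1$ reproduces $g_1$'s own norm on the right, so the estimate is a fixed-$t$ absorption that relies on the a priori finiteness noted above, and the constants must be beaten in the correct order — the universal small factor $Ck^{-(\ga+3)/4}$ first, by taking $k$ large, and only afterwards the $k$-dependent factor $C_kM^{-2}$, by taking $M=M(k)$ large. One must also check that Lemma \ref{leKs} genuinely survives the passage to $L^2_x$ and $L^1_x$, which it does since its proof is purely a pointwise-in-$v$ estimate on positive kernels and hence commutes with Minkowski's inequality. Everything else is a routine combination of the kernel estimates of Lemma \ref{leQ} with the time-decay estimate \eqref{decay}.
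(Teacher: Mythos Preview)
Your proposal is correct and follows essentially the same route as the paper: Duhamel form, exponential decay of the free term via $\nu\geq\nu_0$, smallness of the $\CK_s g_1$ contribution from Lemma~\ref{leKs} absorbed after choosing $k_0$ large and then $M=M(k)$ large, and the quadratic $Q(g,g)$ term handled by the kernel bounds of Lemma~\ref{leQ} combined with \eqref{decay} at $r=3/2$. The only cosmetic differences are that the paper uses \eqref{controlQ} rather than the refined \eqref{controlQ2} for the nonlinear term (either suffices here), and treats the three norms $L^\infty_{x,v}$, $L^\infty_vL^2_x$, $L^\infty_vL^1_x$ sequentially rather than packaging them as $L^\infty_vL^p_x$; your Minkowski observation that Lemma~\ref{leKs} passes to $L^p_x$ is exactly what the paper does implicitly in each case.
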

\begin{proof}
	We integrate along the characteristic in \eqref{g1} to get
	\begin{align}\label{mildg1}
		\dis g_1(t,x,v)=&e^{-\nu(v)t}g_{10}(x-vt,v)+\int_0^t e^{-\nu(v)(t-s)}\CK_s g_1(s,x-v(t-s),v)ds \notag\\
		&+\int_0^t e^{-\nu(v)(t-s)}Q(g,g)(s,x-v(t-s),v)ds.
	\end{align}
We now start to prove the first estimate \eqref{g1infty}. We directly obtain from \eqref{mildg1} that
\begin{equation}\label{g11}
	\left|w_k(v)g_1(t,v)\right|\leq Ce^{-\la t}\|w_kg_{10}\|_{L^\infty_{x,v}}+I_4(t,x,v)+I_5(t,x,v),
\end{equation}
with
	\begin{align*}%\label{g11}
	I_4(t,x,v):=&\int_0^t e^{-\nu(v)(t-s)}\left|w_k(v)\CK_s g_1(s,x-v(t-s),v)\right|ds,\\
	I_5(t,x,v):=&\int_0^t e^{-\nu(v)(t-s)}\left|w_k(v)Q(g,g)(s,x-v(t-s),v)\right|ds,
	\end{align*}
where $\la>0$ is a constant. We choose $k_0>3+\ga$ such that Lemma \ref{leQ} and Lemma \ref{leKs} hold for $k\geq k_0$. Taking the $L^\infty_x$ norm and using the definition of $\CK_s$ in \eqref{defKs}, we have
\begin{align*}
	\|I_4(t,v)\|_{L^\infty_x}\leq &\int_0^t e^{-\nu(v)(t-s)}\|\chi_{\{|v|>M\}}w_k(v)\CK g_1(t,v)\|_{L^\infty_x}ds.
\end{align*}
It follows from \eqref{smallCK} that
\begin{equation*}
	\|I_4(t,v)\|_{L^\infty_x}\leq C	\sup_{0\leq s\leq t}\|(1+s)^\frac{3}{2}w_k g_1(s)\|_{L^\infty_{x,v}}\int_0^t e^{-\nu(v)(t-s)}\nu(v)(1+s)^{-\frac{3}{2}}\left(\frac{C}{k^{\frac{\ga+3}{2}}}+\frac{C_k}{M^2}\right)ds,
\end{equation*}
which combining with \eqref{decay} yields
\begin{align}\label{C2}
	\|I_4(t,v)\|_{L^\infty_x}\leq &C_2\left(\frac{1}{k^{\frac{\ga+3}{2}}}+\frac{C_k}{M^2}\right)	(1+t)^{-\frac{3}{2}}\sup_{0\leq s\leq t}\|(1+s)^\frac{3}{2}w_k g_1(s)\|_{L^\infty_{x,v}},
\end{align}
for some constant $C_2$. We choose $k_0$ large enough, for instance, $k_0>(4C_2)^\frac{2}{\ga+3}$,  so that for any $k\geq k_0$,
\begin{align}\label{largek}
\frac{C_2}{k^{\frac{\ga+3}{2}}}\leq \frac{1}{4}.
\end{align}
Then we let
\begin{align}\label{chooseM}
M=2\sqrt{C_2C_k}
\end{align}
to get from \eqref{C2} and \eqref{largek} that
\begin{align}\label{I4}
	\|I_4(t,v)\|_{L^\infty_x}\leq\frac{1}{2}(1+t)^{-\frac{3}{2}}\sup_{0\leq s\leq t}\|(1+s)^\frac{3}{2}w_k g_1(s)\|_{L^\infty_{x,v}}.
\end{align}
For $I_5$, by the definition of $Q$ in \eqref{defQ}, one gets
\begin{align}\label{I51}
	|I_5(t,x,v)|&\leq \int_0^t e^{-\nu(v)(t-s)}\left|w_k(v)Q(g,g)(s,x-v(t-s),v)\right|ds\notag\\
	&\leq C\sup_{0\leq s\leq t}\|(1+s)^\frac{3}{4}w_kg(s)\|^2_{L^\infty_{x,v}}\int_0^t e^{-\nu(v)(t-s)}(1+s)^{-\frac{3}{2}}\notag\\
	&\qquad\qquad\times\int_{\R^3}\int_{\S^2}|v-u|^\ga\left( \frac{w_k(v)}{w_k(u')w_k(v')}+\frac{1}{w_k(u)}\right) d\omega duds.
\end{align}
Noticing the fact that $\int_{\R^3}\frac{|v-u|^\ga}{w_k(u)}d\omega du\leq C_k\nu(v)$ for $k\geq k_0>3+\ga$, we deduce by \eqref{controlQ} and \eqref{decay} that
\begin{align}\label{I5}
	|I_5(t,x,v)|&\leq C_k\sup_{0\leq s\leq t}\|(1+s)^\frac{3}{4}w_kg(s)\|^2_{L^\infty_{x,v}}\int_0^t e^{-\nu(v)(t-s)}\nu(v)(1+s)^{-\frac{3}{2}}ds\notag\\
	&\leq C_k(1+t)^{-\frac{3}{2}}\sup_{0\leq s\leq t}\|(1+s)^\frac{3}{4}w_kg(s)\|^2_{L^\infty_{x,v}}.
\end{align}
Collecting \eqref{g11}, \eqref{I4} and \eqref{I5}, we obtain
\begin{align}\label{g1infty1}
	\dis \|w_kg_1(t)\|_{L^\infty_{x,v}}\leq&Ce^{-\la t}\|w_kg_{10}\|_{L^\infty_{x,v}}+\frac{1}{2}	(1+t)^{-\frac{3}{2}}\sup_{0\leq s\leq t}\|(1+s)^\frac{3}{2}w_k g_1(s)\|_{L^\infty_{x,v}} \notag\\
	&+C_k(1+t)^{-\frac{3}{2}}\sup_{0\leq s\leq t}\|(1+s)^\frac{3}{4}w_kg(s)\|^2_{L^\infty_{x,v}},
\end{align}
which yields
\begin{align}\label{g1est1}
	\dis \sup_{0\leq s\leq t}\|(1+s)^\frac{3}{2}w_k g_1(s)\|_{L^\infty_{x,v}} \leq&C\|w_kg_{10}\|_{L^\infty_{x,v}}+C_k\sup_{0\leq s\leq t}\|(1+s)^\frac{3}{4}w_kg(s)\|^2_{L^\infty_{x,v}}.	
\end{align}
Then \eqref{g1infty} holds from the above inequality.

We turn to \eqref{g1L2}. It follows from similar arguments as in \eqref{g11} and \eqref{I4} that
 \begin{multline}\label{wk0g1L2}
 	\dis \|w_{k}g_1(t)\|_{L^\infty_vL^2_x}\leq Ce^{-\la t}\|w_{k}g_{10}\|_{L^\infty_vL^2_x}\\
 	+\frac{1}{2}	(1+t)^{-\frac{3}{2}}\sup_{0\leq s\leq t}\|(1+s)^\frac{3}{2}w_{k} g_1(s)\|_{L^\infty_vL^2_x}+\|I_5(t)\|_{L^\infty_vL^2_x},
 \end{multline}
where $I_5$ is given in \eqref{g11}. Applying similar arguments as in \eqref{I4} and \eqref{I51}, we have
\begin{align*}
	\|I_5(t,v)\|_{L^2_x}&\leq C\int_0^t e^{-\nu(v)(t-s)}(1+s)^{-\frac{3}{2}}\int_{\R^3}\int_{\S^2}|v-u|^\ga\left(\frac{w_{k}(v)}{w_{k}(v')w_{k}(u')}+\frac{1}{w_{k}(u)}\right)\notag\\
	&\qquad\qquad\times \|(1+s)^\frac{3}{4}w_{k}g(s)\|_{L^\infty_vL^2_x}\|(1+s)^\frac{3}{4}w_{k}g(s)\|_{L^\infty_{x,v}}d\omega duds.
\end{align*}	
	Again by \eqref{controlQ}, Cauchy-Schwarz inequality and \eqref{decay}, one gets	
\begin{align}\label{I5L2}	
		&\|I_5(t,v)\|_{L^2_x}\notag\\
		&\leq C_k\int_0^t e^{-\nu(v)(t-s)}(1+s)^{-\frac{3}{2}}\nu(v)\notag\\
		&\qquad\qquad\times\left\{\sup_{0\leq s\leq t}\|(1+s)^\frac{3}{4}w_{k}g(s)\|^2_{L^\infty_{x,v}}+\sup_{0\leq s\leq t}\|(1+s)^\frac{3}{4}w_{k}g(s)\|^2_{L^\infty_vL^2_x}\right\}\notag\\
		&\leq C_k(1+t)^{-\frac{3}{2}}\left\{\sup_{0\leq s\leq t}\|(1+s)^\frac{3}{4}w_{k}g(s)\|^2_{L^\infty_{x,v}}+\sup_{0\leq s\leq t}\|(1+s)^\frac{3}{4}w_{k}g(s)\|^2_{L^\infty_vL^2_x}\right\}.
\end{align}
Combining \eqref{wk0g1L2} and \eqref{I5L2}, similar arguments in \eqref{g1infty1} and \eqref{g1est1} show that
\begin{multline*}
	\dis \sup_{0\leq s\leq t}\|(1+s)^\frac{3}{2} w_{k} g_1(s)\|_{L^\infty_vL^2_x} \leq C\|w_{k}g_{10}\|_{L^\infty_vL^2_x}\\
	+C_k\left\{\sup_{0\leq s\leq t}\|(1+s)^\frac{3}{4}w_{k}g(s)\|^2_{L^\infty_{x,v}}+\sup_{0\leq s\leq t}\|(1+s)^\frac{3}{4}w_{k}g(s)\|^2_{L^\infty_vL^2_x}\right\},
\end{multline*}
which yields \eqref{g1L2}.

As for \eqref{g1L1}, it follows from similar arguments in \eqref{g11}, \eqref{I4} and \eqref{wk0g1L2} that
 \begin{multline}\label{wk0g1L1}
	\dis \|w_{k}g_1(t)\|_{L^\infty_vL^1_x}\leq Ce^{-\la t}\|w_{k}g_{10}\|_{L^\infty_vL^1_x}\\
	+\frac{1}{2}	(1+t)^{-\frac{3}{2}}\sup_{0\leq s\leq t}\|(1+s)^\frac{3}{2}w_{k} g_1(s)\|_{L^\infty_vL^1_x}+\|I_5(t)\|_{L^\infty_vL^1_x}.
\end{multline}
We use \eqref{controlQ} and \eqref{decay} just as in \eqref{I4} and \eqref{I51} to obtain
\begin{align}\label{I5L1}	
	\|I_5(t,v)\|_{L^1_x}&\leq C\int_0^t e^{-\nu(v)(t-s)}(1+s)^{-\frac{3}{2}}\int_{\R^3}\int_{\S^2}|v-u|^\ga\left(\frac{w_{k}(v)}{w_{k}(v')w_{k}(u')}+\frac{1}{w_{k}(u)}\right)\notag\\
	&\qquad\qquad\qquad\quad\qquad\times \|(1+s)^\frac{3}{4}w_{k}g(s)\|^2_{L^\infty_vL^2_x}d\omega duds\notag\\
	&\leq C_k\int_0^t e^{-\nu(v)(t-s)}(1+s)^{-\frac{3}{2}}\nu(v)\sup_{0\leq s\leq t}\|(1+s)^\frac{3}{4}w_{k}g(s)\|^2_{L^\infty_vL^2_x}\notag\\
	&\leq C_k(1+t)^{-\frac{3}{2}}\sup_{0\leq s\leq t}\|(1+s)^\frac{3}{4}w_{k}g(s)\|^2_{L^\infty_vL^2_x}.
\end{align}
The first inequality in the above estimate holds by the fact that for any $s\geq0$, $x,\xi,\eta\in \R^3$,
 \begin{align*}%\label{CS}
 \int_{\R^3}(1+s)^\frac{3}{2} |(w_{k}g)(s,x,\xi)||(w_{k}g)(s,x,\eta)|dx\leq \|(1+s)^\frac{3}{4}w_{k}g(s)\|^2_{L^\infty_vL^2_x}.
 \end{align*}
  Then it holds from \eqref{wk0g1L1} and \eqref{I5L1} that
 \begin{align*}
 	\dis \sup_{0\leq s\leq t}\|(1+s)^\frac{3}{2}&w_{k} g_1(s)\|_{L^\infty_vL^1_x} \leq C\|w_{k}g_{10}\|_{L^\infty_vL^1_x}+C_k\sup_{0\leq s\leq t}\|(1+s)^\frac{3}{4}w_{k}g(s)\|^2_{L^\infty_vL^1_x},	
 \end{align*}
which proves \eqref{g1L1} and concludes the proof of Lemma \ref{leg1hard}.
\end{proof}

\subsection{Time decay in the symmetric case}
After obtaining uniform estimates on $g_1$ from the equation \eqref{g1}, we now focus on the other equation \eqref{g2} for estimates on $g_2$. Recall \eqref{def.L} for the self-adjoint operator $L$. We first consider the corresponding linear homogeneous problem
%which corresponds to the equation
\begin{align}\label{linearf}
	\pa_t f +v\cdot\na_x f +Lf &=0,\qquad f(0,x,v)=f_{0}(x,v).
\end{align}
The following proposition gives the large time behavior of solutions to  \eqref{linearf} for initial data in $L^2_{x,v}\cap L^2_vL^1_x$. Since the proof is quite standard, we omit it for brevity; see \cite{DSa} and references therein, for instance.

\begin{proposition}\label{propL2}
	Let $0\leq \ga\leq 1$. Let $f$ be the solution to the problem \eqref{linearf}, then it holds
	\begin{align}\label{linearL2}
		\|f(t)\|_{L^2_{x,v}}\leq C(1+t)^{-\frac{3}{4}}(\|f_0\|_{L^2_{x,v}}+\|f_0\|_{L^2_vL^1_x}),
	\end{align}
for any $t\geq0$.
\end{proposition}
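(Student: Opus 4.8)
The statement is classical, and the plan is to reduce it to a frequency-wise Lyapunov estimate via the Fourier transform in $x$, combined with the spectral gap of $L$ and the macro--micro decomposition. Writing $\hat f(t,\xi,v)$ for the Fourier transform of $f$ in the spatial variable, \eqref{linearf} becomes $\pa_t\hat f+\rmi(v\cdot\xi)\hat f+L\hat f=0$. Since $v\cdot\xi$ acts as a real multiplier, the transport term is skew-adjoint on $L^2_v$, so taking the real part of the $L^2_v$-inner product with $\hat f$ and using that $L$ is self-adjoint, nonnegative, with spectral gap
\[
\rmre\,\lag L\hat f,\hat f\rag_{L^2_v}\geq c_0\,|\{I-P\}\hat f|_\nu^2,\qquad |g|_\nu^2:=\int_{\R^3}\nu(v)|g(v)|^2\,dv,
\]
where $P$ is the orthogonal projection onto $\ker L=\mathrm{span}\{\sqrt\mu,\ v_1\sqrt\mu,\ v_2\sqrt\mu,\ v_3\sqrt\mu,\ |v|^2\sqrt\mu\}$, one gets the basic microscopic identity $\tfrac12\tfrac{d}{dt}|\hat f(t,\xi)|_{L^2_v}^2+c_0\,|\{I-P\}\hat f(t,\xi)|_\nu^2\leq0$. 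For hard potentials $\nu(v)\geq\nu_0>0$, so this provides full microscopic dissipation in $L^2_v$.

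To capture the decay of the macroscopic part $P\hat f$, I would use the local conservation laws in Fourier variables --- the equations for the moments $\int\hat f\sqrt\mu\,dv$, $\int\hat f\,v\sqrt\mu\,dv$, $\int\hat f\,|v|^2\sqrt\mu\,dv$ --- to construct an interaction functional $G(t,\xi)$, linear in $\xi$ and bilinear in $(P\hat f,\{I-P\}\hat f)$, satisfying
\[
\tfrac{d}{dt}\,\rmre\,G(t,\xi)\geq c_1\,\frac{|\xi|^2}{1+|\xi|^2}\,|P\hat f(t,\xi)|_{L^2_v}^2-C\,|\{I-P\}\hat f(t,\xi)|_\nu^2.
\]
Adding a sufficiently small multiple $\ka>0$ of this to the identity above yields $E(t,\xi):=|\hat f(t,\xi)|_{L^2_v}^2+\ka\,\rmre\,G(t,\xi)$, which for $\ka$ small is comparable to $|\hat f(t,\xi)|_{L^2_v}^2$ and obeys
\[
\tfrac{d}{dt}E(t,\xi)+\la\,\frac{|\xi|^2}{1+|\xi|^2}\,E(t,\xi)\leq0
\]
for some $\la>0$ independent of $\xi$, hence $|\hat f(t,\xi)|_{L^2_v}^2\leq C\,e^{-\la\rho(\xi)t}\,|\hat f_0(\xi)|_{L^2_v}^2$ with $\rho(\xi)=\min\{|\xi|^2,1\}$.

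The conclusion then follows by integrating in frequency. By Plancherel,
\[
\|f(t)\|_{L^2_{x,v}}^2=\int_{\R^3}|\hat f(t,\xi)|_{L^2_v}^2\,d\xi\leq C\int_{|\xi|\leq1}e^{-\la|\xi|^2t}|\hat f_0(\xi)|_{L^2_v}^2\,d\xi+Ce^{-\la t}\|f_0\|_{L^2_{x,v}}^2,
\]
and bounding $|\hat f_0(\xi)|_{L^2_v}\leq\|f_0\|_{L^2_vL^1_x}$ uniformly in $\xi$ together with $\int_{\R^3}e^{-\la|\xi|^2t}\,d\xi\leq C(1+t)^{-3/2}$ gives $\|f(t)\|_{L^2_{x,v}}^2\leq C(1+t)^{-3/2}\big(\|f_0\|_{L^2_{x,v}}^2+\|f_0\|_{L^2_vL^1_x}^2\big)$, which is \eqref{linearL2} after taking square roots.

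The main obstacle is the middle step: the construction of the interaction functional $G(t,\xi)$ and the verification of the Lyapunov inequality uniformly in the frequency $\xi$, which is the standard Kawashima-type compensating-function argument for the linearized Boltzmann operator; in the hard-potential case the boundedness of $\nu$ from below is precisely what makes the weight $|\xi|^2/(1+|\xi|^2)$ (hence heat-type decay) appear. Since the argument is by now routine, the details are omitted here and can be found in \cite{DSa,SS} and the references therein; alternatively one may obtain the same pointwise bound on $|\hat f(t,\xi)|_{L^2_v}$ through the spectral analysis of the symbol $-\rmi(v\cdot\xi)-L$ near $\xi=0$ together with resolvent estimates for $|\xi|$ bounded away from the origin.
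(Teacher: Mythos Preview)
Your proposal is correct and follows exactly the standard route the paper has in mind: the paper itself omits the proof as ``quite standard'' and points to \cite{DSa,SS}, which carry out precisely the Fourier-side Lyapunov/compensating-function argument you sketch (spectral gap for the microscopic part, Kawashima-type interaction functional to recover macroscopic dissipation with weight $|\xi|^2/(1+|\xi|^2)$, then frequency integration using $\|\hat f_0(\xi)\|_{L^2_v}\le \|f_0\|_{L^2_vL^1_x}$ on low frequencies). There is nothing to add.
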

With the help of the above proposition, we can prove the $L^\infty_{x,v}$ and $L^\infty_vL^2_x$ decay properties stated as follows.

\begin{lemma}\label{lem4.2}
	Let $0\leq \ga\leq 1$ and $k>3+\ga$. Let $f$ be the solution to the problem \eqref{linearf}, then it holds
	\begin{align}
		&\|w_{k}f(t)\|_{L^\infty_{x,v}}\leq C_k(1+t)^{-\frac{3}{4}}(\|w_{k}f_0\|_{L^\infty_{x,v}}+\|f_0\|_{L^2_{x,v}}+\|f_0\|_{L^2_vL^1_x}), \label{linearLinf}\\
				&\|w_{k}f(t)\|_{L^\infty_vL^2_x}\leq C_k(1+t)^{-\frac{3}{4}}(\|w_{k}f_0\|_{L^\infty_vL^2_x}+\|f_0\|_{L^2_vL^1_x}), \label{linearLinfL2}
	\end{align}
for any $t\geq0$.
\end{lemma}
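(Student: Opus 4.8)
The plan is to run Guo's $L^2$–$L^\infty$ interplay scheme, feeding in the $L^2_{x,v}$ decay of Proposition \ref{propL2} as the external input, and to use the second iteration of Duhamel's formula to produce a double collision kernel that can be handled by a change of variables. First I would write the mild form of \eqref{linearf} along backward characteristics,
\begin{align*}
f(t,x,v)=e^{-\nu(v)t}f_0(x-vt,v)+\int_0^t e^{-\nu(v)(t-s)}(Kf)(s,x-v(t-s),v)\,ds,
\end{align*}
where $(Kf)(s,y,v)=\int_{\R^3}k(v,\eta)f(s,y,\eta)\,d\eta$. Since $0\le\gamma\le1$ gives $\nu(v)\ge\nu_0>0$, the free-streaming term is bounded by $e^{-\nu_0 t}\|w_kf_0\|_{L^\infty_{x,v}}$, which decays faster than $(1+t)^{-3/4}$. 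For the second term I substitute the same representation once more for $f$ inside $Kf$. This yields a piece linear in $f_0$, namely $\int_0^t\!\int e^{-\nu(v)(t-s)}k(v,\eta)e^{-\nu(\eta)s}f_0(\cdots,\eta)\,d\eta\,ds$, which by the weight-transfer bound \eqref{Prok} of Lemma \ref{propK} (after throwing away the Gaussian factor) is controlled by $C_ke^{-\nu_0 t/2}\|w_kf_0\|_{L^\infty_{x,v}}$; and the genuine double-convolution piece
\begin{align*}
\int_0^t\!\!\int_0^s\!\!\iint e^{-\nu(v)(t-s)}e^{-\nu(\eta)(s-\tau)}k(v,\eta)k(\eta,\eta')\,f\big(\tau,x-v(t-s)-\eta(s-\tau),\eta'\big)\,d\eta'\,d\eta\,d\tau\,ds.
\end{align*}

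The core is to estimate this last term. I would fix a large $N>0$ and a small $\kappa>0$ and split the domain. On the ``bad'' set $\{|v|\ge N\}\cup\{|\eta|\ge N\}\cup\{s-\tau\le\kappa\}$ I bound $|w_{k}(\eta')f(\tau)|$ by $(1+\tau)^{-3/4}$ times the bootstrap quantity $\sup_{0\le\tau\le t}(1+\tau)^{3/4}\|w_kf(\tau)\|_{L^\infty_{x,v}}$, use \eqref{Prok} and Lemma \ref{propK} for the velocity integrals (which contribute an extra $(1+|v|)^{-1}$, hence $1/N$ when $|v|\ge N$), and use \eqref{tdecay1} of Lemma \ref{timedecay} for the time integrals; the outcome is a factor $(C/N+C_k\kappa)(1+t)^{-3/4}$ times the bootstrap quantity, which for $N$ large and $\kappa$ small is absorbed into the left-hand side. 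On the ``good'' set $\{|v|,|\eta|\le N,\ s-\tau\ge\kappa\}$ I first truncate $k(\eta,\eta')$ to a bounded, compactly supported kernel (the truncation error, again small in $L^\infty_{\eta'}$, is absorbed just as above), then for fixed $x,v,s,\tau,\eta'$ perform the linear change of variables $\eta\mapsto y=x-v(t-s)-\eta(s-\tau)$ with Jacobian $(s-\tau)^{-3}\le\kappa^{-3}$, and apply Cauchy–Schwarz in $y$ followed by Cauchy–Schwarz in the truncated variable $\eta'$; this converts the pointwise value of $f(\tau)$ into $\|f(\tau)\|_{L^2_{x,v}}$. Inserting Proposition \ref{propL2} gives the factor $(1+\tau)^{-3/4}(\|f_0\|_{L^2_{x,v}}+\|f_0\|_{L^2_vL^1_x})$, and the two $e^{-\nu_0(\cdot)}$ time integrations (the inner one now harmless because $s-\tau\ge\kappa$) collapse via \eqref{tdecay1} to the global rate $(1+t)^{-3/4}$. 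Collecting all pieces and moving the absorbed terms to the left yields \eqref{linearLinf}, where one uses $\|f_0\|_{L^2_{x,v}}\le C\|w_kf_0\|_{L^\infty_vL^2_x}$ valid for $k>3/2$.

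For \eqref{linearLinfL2} I would run the same two-fold iteration but take the $L^2_x$ norm from the start, using Minkowski's inequality to move it through the time and velocity integrals; the spatial translations in the arguments of $f$ leave the $L^2_x$ norm unchanged, so no $x$-change of variables is needed here. The free-streaming term gives $e^{-\nu_0 t}\|w_kf_0\|_{L^\infty_vL^2_x}$ and the linear-in-$f_0$ piece is handled as before; in the double-convolution piece, on the good set $\{|\eta'|\le N\}$ one applies Cauchy–Schwarz in $\eta'$ to produce $\|f(\tau)\|_{L^2_{x,v}}$ and then Proposition \ref{propL2}, while the bad set is absorbed exactly as above. This gives \eqref{linearLinfL2}, again using $\|f_0\|_{L^2_{x,v}}\le C\|w_kf_0\|_{L^\infty_vL^2_x}$.

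The main obstacle is the legitimacy of the absorption step: one must know a priori that the bootstrap quantities $\sup_{0\le s\le t}(1+s)^{3/4}\|w_kf(s)\|_{L^\infty_{x,v}}$ and $\sup_{0\le s\le t}(1+s)^{3/4}\|w_kf(s)\|_{L^\infty_vL^2_x}$ are finite before rearranging. I would settle this by first proving the crude bound $\|w_kf(t)\|_{L^\infty_{x,v}}\le e^{C_kt}\|w_kf_0\|_{L^\infty_{x,v}}$ directly from the mild form and Lemma \ref{propK} (a one-line Gronwall estimate), which makes these quantities finite on every finite interval $[0,T]$; since the coefficients $C/N$, $C_k\kappa$ absorbed are independent of $T$, the bootstrap closes on $[0,T]$ for each $T$ and then $T\to\infty$. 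The other delicate point is the change of variables $\eta\mapsto y$: one must check it is an affine bijection with controlled Jacobian on the truncated region and that replacing $k(\eta,\eta')$ by a bounded compactly supported kernel is genuinely a lower-order perturbation — this is precisely where the restriction $s-\tau\ge\kappa$ and the decay built into the bound \eqref{Prok} of Lemma \ref{propK} are used.
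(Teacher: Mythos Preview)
Your proposal is correct and follows essentially the same approach as the paper: second Duhamel iteration, case splitting by velocity magnitude and time gap, kernel truncation on the good set, and the change of variables $\eta\mapsto y$ (for the $L^\infty_{x,v}$ bound) together with Proposition \ref{propL2} to convert the double-kernel term into the $L^2_{x,v}$ norm. The paper's organization differs only in minor bookkeeping: it works with the weighted kernel $k_{w_k}(v,\eta)=k(v,\eta)w_k(v)/w_k(\eta)$ throughout, separates four cases (with thresholds $N,2N,3N$) rather than your three, and on the intermediate-velocity case $|v|\le N,\ |\eta|\ge 2N$ it extracts smallness from the Gaussian factor $e^{|v-\eta|^2/20}$ in \eqref{Prok} rather than from the $(1+|\eta|)^{-1}$ decay of the inner $\xi$-integral as you do; both devices work. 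Your explicit treatment of the a priori finiteness of the bootstrap quantity via a crude Gronwall bound is a point the paper leaves implicit.
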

\begin{proof}
	Recall the definition of the function $k=k(v,\eta)$ in Lemma \ref{propK}. Define
	\begin{align}\label{Defkw}
		k_{w_k}(v,\eta)=k(v,\eta)\frac{w_k(v)}{w_k(\eta)}
	\end{align}
and
	\begin{align*}
		K_{w_k}f(v)=\int_{\R^3}k_{w_k}(v,\eta)f(\eta)d\eta.
	\end{align*}
	We can rewrite the equation \eqref{linearf} as
	\begin{align*}
		\pa_t w_{k}f +v\cdot\na_x w_{k}f +\nu w_{k}f-K_{w_k}w_{k}f=0.
	\end{align*}
Then
\begin{align*}
	w_{k}(v)f(t,x,v)
%	&=e^{-\nu(v)t}w_{k}(v)f_0(x-vt,v)\\
%	&\quad+\int_0^t e^{-\nu(v)(t-s)}K_{w_k}(w_{k}f)(s,x-v(t-s),v)ds\notag\\
	&=e^{-\nu(v)t}w_{k}(v)f_0(x-vt,v)\notag\\
	&\quad+\int_0^t e^{-\nu(v)(t-s)}\int_{\R^3}k_{w_k}(v,\eta)(w_{k}f)(s,x-v(t-s),\eta)d\eta ds.
\end{align*}
We further have
	\begin{align}\label{iteration}
	&w_{k}(v)f(t,x,v)\notag\\
	&=e^{-\nu(v)t}w_{k}(v)f_0(x-vt,v)\notag\\
	&\quad+\int_0^t e^{-\nu(v)(t-s)}\int_{\R^3}k_{w_k}(v,\eta)e^{-\nu(\eta)s}w_{k}(\eta)f_0(x-v(t-s),\eta)d\eta ds\notag\\
	&\quad+\int_0^t e^{-\nu(v)(t-s)}\int_{\R^3}k_{w_k}(v,\eta)\int_0^s e^{-\nu(\eta)(s-s_1)}\notag\\
	&\qquad\quad\times\int_{\R^3}k_{w_k}(\eta,\xi)w_{k}(\xi)f(s_1,x_1-\eta(s-s_1),\xi)d\xi ds_1d\eta  ds,
\end{align}
where $x_1=x-v(t-s)$.

For the $L^\infty_{x,v}$ estimate \eqref{linearLinf}, by the fact that $\nu(v)\geq \nu_0$ for some constant $\nu_0>0$, the first term on the right hand side above is directly bounded by $e^{-\nu_0 t}\|w_{k}f_0\|_{L^\infty_{x,v}}$. For the second term, it holds from \eqref{Prok} that
\begin{align*}
	&\int_0^t e^{-\nu(v)(t-s)}\int_{\R^3}k_{w_k}(v,\eta)e^{-\nu(\eta)s}w_{k}(\eta)f_0(x-v(t-s),\eta)d\eta ds\notag\\
	&\leq Cte^{-\nu_0 t}\|w_{k}f_0\|_{L^\infty_{x,v}}\int_{\R^3}k_{w_k}(v,\eta)d\eta\notag\\
	&\leq C_ke^{-\frac{\nu_0}{2} t}\|w_{k}f_0\|_{L^\infty_{x,v}}.
\end{align*}
Then we have
\begin{align}\label{wf1}
	|w_{k}(v)f(t,x,v)|\leq C_ke^{-\la_0 t}\|w_{k}f_0\|_{L^\infty_{x,v}}+I_6(t,x,v),
\end{align}
where $\la_0>0$ is a constant and
\begin{multline}\label{defI6}
	I_6(t,x,v)=\int_0^t e^{-\nu(v)(t-s)}\int_{\R^3}\int_{\R^3}\left|k_{w_k}(v,\eta)k_{w_k}(\eta,\xi)\right|\\
	\times\int_0^s e^{-\nu(\eta)(s-s_1)}\left|w_{k}(\xi)f(s_1,x_1-\eta(s-s_1),\xi)\right|d\xi ds_1d\eta  ds.
\end{multline}
 We split $I_6(t,x,v)$ into four cases as \cite{DHWY,GuoY,Guo}.

\medskip
\noindent{\it Case 1. } $|v|\geq N$. We directly have from \eqref{defI6} and \eqref{tdecay1} that
\begin{align}\label{I6c1}
	I_6(t,x,v) &\leq C\sup_{0\leq s\leq t}\|(1+s)^\frac{3}{4}w_{k}f(s)\|_{L^\infty_{x,v}}\notag\\
	&\qquad\times\int_0^t e^{-\nu_0(t-s)}\int_{\R^3}\int_{\R^3}\left|k_{w_k}(v,\eta)k_{w_k}(\eta,\xi)\right|\notag\\
	&\qquad\qquad\times\int_0^s e^{-\nu_0(s-s_1)}(1+s_1)^{-\frac{3}{4}}d\xi ds_1d\eta  ds\notag\\
	&\leq C(1+t)^{-\frac{3}{4}}\sup_{0\leq s\leq t}\|(1+s)^\frac{3}{4}w_{k}f(s)\|_{L^\infty_{x,v}}\notag\\
	&\qquad\qquad\times\int_{\R^3}\int_{\R^3}\left|k_{w_k}(v,\eta)k_{w_k}(\eta,\xi)\right|d\xi d\eta.
\end{align}
 It follows from \eqref{Prok} that
 \begin{align}\label{largevc1}
 	\int_{\R^3}\int_{\R^3}\left|k_{w_k}(v,\eta)k_{w_k}(\eta,\xi)\right|d\xi d\eta\leq \frac{C_k}{1+|v|}\leq \frac{C_k}{N}.
 \end{align}
Then combining \eqref{I6c1} and \eqref{largevc1}, we get
 \begin{align}\label{HC1}
 	I_6(t,x,v)&\leq \frac{C_k}{N}(1+t)^{-\frac{3}{4}}\sup_{0\leq s\leq t}\|(1+s)^\frac{3}{4}w_{k}f(s)\|_{L^\infty_{x,v}}.
 \end{align}

\medskip
\noindent{\it Case 2. } $|v|\leq N$, $|\eta|\geq 2N$ or $|\eta|\leq2N$, $|\xi|\geq3N$. In this case, we have either $|v-\eta|\geq N$ or $|\eta-\xi|\geq N$. Similar arguments as above show that
\begin{align}\label{HC2}
	I_6(t,x,v)&\leq C e^{-\frac{N^2}{20}}\sup_{0\leq s\leq t}\|(1+s)^\frac{3}{4}w_{k}f(s)\|_{L^\infty_{x,v}}\int_0^t e^{-\nu_0(t-s)}\notag\\
	&\qquad \cdot\int_{\R^3}\int_{\R^3}\left|k_{w_k}(v,\eta)e^{\frac{|v-\eta|^2}{20}}k_{w_k}(\eta,\xi)e^{\frac{|\eta-\xi|^2}{20}}\right|\notag\\
	&\qquad\qquad\cdot\int_0^s e^{-\nu_0(s-s_1)}(1+s_1)^{-\frac{3}{4}}d\xi ds_1d\eta  ds\notag\\
	&\leq \frac{C_k}{N}(1+t)^{-\frac{3}{4}}\sup_{0\leq s\leq t}\|(1+s)^\frac{3}{4}w_{k}f(s)\|_{L^\infty_{x,v}}.
\end{align}
\medskip
\noindent{\it Case 3. } $s-s_1\leq\la\ll1$. It follows from the fact $s-\la\leq s_1\leq s$ that
\begin{align}\label{HC3}
	I_6(t,x,v)&\leq C\sup_{0\leq s\leq t}\|(1+s)^\frac{3}{4}w_{k}f(s)\|_{L^\infty_{x,v}}\notag\\
	&\qquad\times\int_0^t e^{-\nu_0(t-s)}\int_{\R^3}\int_{\R^3}\left|k_{w_k}(v,\eta)k_{w_k}(\eta,\xi)\right|\notag\\
	&\qquad\qquad\cdot\int_{s-\la}^s e^{-\nu_0(s-s_1)}(1+s_1)^{-\frac{3}{4}}d\xi ds_1d\eta  ds\notag\\
	&\leq C_k\la(1+t)^{-\frac{3}{4}}\sup_{0\leq s\leq t}\|(1+s)^\frac{3}{4}w_{k}f(s)\|_{L^\infty_{x,v}}.
\end{align}

\medskip
\noindent{\it Case 4. } $|v|\leq N$, $|\eta|\leq2N$, $|\xi|\leq3N$, $s-s_1\geq\la$.
This case needs to be treated more carefully. By the property of $k$ \eqref{Prok}, we can approximate $k_{w_k}$ by a smooth function $k_N$ with compact support such that
\begin{align}\label{app}
	\sup_{|v|\leq3N}\int_{|\eta|\leq3N}\left| k_{w_k}(v,\eta)-k_N(v,\eta) \right|d\eta \leq \frac{C_k}{N}.
\end{align}
We can split $I_6$
\begin{align*}
	\dis I_6(t,x,v)&\leq C\sup_{0\leq s\leq t}\|(1+s)^\frac{3}{4}w_{k}f(s)\|_{L^\infty_{x,v}}\int^t_0 e^{-\nu_0(t-s)}\notag\\
	&\quad\times \int_{\R^3}\int_{\R^3}\left|k_{w_k}(v,\eta)-k_{N}(v,\eta)\right|\left|k_{w_k}(\eta,\xi)\right|\int^{s-\la}_0e^{-\nu_0(s-s_1)}(1+s_1)^{-\frac{3}{4}}ds_1d\eta d\xi ds \notag\\
	&+C\sup_{0\leq s\leq t}\|(1+s)^\frac{3}{4}w_{k}f(s)\|_{L^\infty_{x,v}}\int^t_0 e^{-\nu_0(t-s)}\notag\\
	&\quad\times \int_{\R^3}\int_{\R^3}\left|k_{N}(v,\eta)\right|\left|k_{w_k}(\eta,\xi)-k_{N}(\eta,\xi)\right|\int^{s-\la}_0e^{-\nu_0(s-s_1)}(1+s_1)^{-\frac{3}{4}}ds_1d\eta d\xi ds \notag\\
	&+\int^t_0 e^{-\nu(v)(t-s)}\iint_{|\eta|\leq2N,|\xi|\leq3N}\left|k_{N}(v,\eta)k_{N}(\eta,\xi)\right|\notag\\
	&\quad\times \int^{s-\la}_0e^{-\nu(\eta)(s-s_1)}\left|(w_k f)(s_1,x_1-\eta(s-s_1),\xi)\right|ds_1d\eta d\xi  ds.
\end{align*}
Combining the above inequality and \eqref{app}, we get
\begin{align}\label{HC4}
	\dis I_6(t,x,v)&\leq \frac{C_k}{N}(1+t)^{-\frac{3}{4}}\sup_{0\leq s\leq t}\|(1+s)^\frac{3}{4}w_{k}f(s)\|_{L^\infty_{x,v}}+I_{61}(t,x,v),
\end{align}
where
\begin{align*}%\label{defI61}
	\dis I_{61}(t,x,v)&=C_{k,N}\int^t_0 e^{-\nu_0(t-s)}\int^{s-\la}_0e^{-\nu_0(s-s_1)}\notag\\
	&\qquad\qquad\times\iint_{|\eta|\leq2N,|\xi|\leq3N}\left|f(s_1,x_1-\eta(s-s_1),\xi)\right|d\eta d\xi  ds_1ds.
\end{align*}
Using Cauchy-Schwarz's inequality and change of variable $y=x_1-\eta(s-s_1)$, we have
 \begin{align}\label{I611}
 	\dis I_{61}(t,x,v)& \leq C_{k,N}\int^t_0e^{-\nu_0(t-s)}\int^{s-\la}_0e^{-\nu_0(s-s_1)}\notag\\
 	&\qquad\quad\times\left(\iint_{|\eta|\leq2N,|\xi|\leq3N}\left|f(s_1,x_1-\eta(s-s_1),\xi)\right|^2d\eta d\xi\right)^\frac{1}{2}  ds_1ds\notag\\
 	&\leq C_{k,N,\la}\int^t_0e^{-\nu_0(t-s)}\int^{s-\la}_0e^{-\nu_0(s-s_1)}\left(\int_{\R^3}\int_{\R^3}\left|f(s_1,y,\xi)\right|^2dy d\xi\right)^\frac{1}{2}  ds_1ds\notag\\
 	&\leq C_{k,N,\la}\int^t_0e^{-\nu_0(t-s)}\int^{s-\la}_0e^{-\nu_0(s-s_1)}\|f(s_1)\|_{L^2_{x,v}} ds_1ds.
 \end{align}
Then it follows from \eqref{tdecay1}, \eqref{linearL2} and \eqref{I611} that
\begin{align}\label{I61}
	\dis I_{61}(t,x,v)&\leq C_{k,N,\la}\int^t_0e^{-\nu_0(t-s)}\int^{s-\la}_0e^{-\nu_0(s-s_1)}(1+s_1)^{-\frac{3}{4}}(\|f_0\|_{L^2_{x,v}}+\|f_0\|_{L^2_vL^1_x}) ds_1ds\notag\\
	&\leq C_{k,N,\la}(1+t)^{-\frac{3}{4}}(\|f_0\|_{L^2_{x,v}}+\|f_0\|_{L^2_vL^1_x}).
\end{align}
For Case $4$, it holds by \eqref{HC4} and \eqref{I61} that
\begin{multline}\label{HC40}
	I_6(t,x,v)\leq \frac{C_k}{N}(1+t)^{-\frac{3}{4}}\sup_{0\leq s\leq t}\|(1+s)^\frac{3}{4}w_{k}f(s)\|_{L^\infty_{x,v}}\\
	+C_{k,N,\la}(1+t)^{-\frac{3}{4}}(\|f_0\|_{L^2_{x,v}}+\|f_0\|_{L^2_vL^1_x}).
\end{multline}

Collecting four cases \eqref{HC1}, \eqref{HC2}, \eqref{HC3} and \eqref{HC40}, one has from \eqref{wf1} that
\begin{align*}
	\dis \|w_{k}f(t)\|_{L^\infty_{x,v}}&\leq C_ke^{-\la_0 t}\|w_{k}f_0\|_{L^\infty_{x,v}}\\
	&\quad+ \big(\frac{C_k}{N}+C_k\la\big)(1+t)^{-\frac{3}{4}}\sup_{0\leq s\leq t}\|(1+s)^\frac{3}{4}w_{k}f(s)\|_{L^\infty_{x,v}}\notag\\
	&\quad+C_{k,N,\la}(1+t)^{-\frac{3}{4}}(\|f_0\|_{L^2_{x,v}}+\|f_0\|_{L^2_vL^1_x}).
\end{align*}
Then \eqref{linearLinf} holds by choosing $N$ large and $\la$ small.

The estimate \eqref{linearLinfL2} can be obtained in the similar way. From \eqref{iteration} we have
\begin{align}\label{wf2}
	\|w_{k}(v)f(t,v)\|_{L^2_x}\leq C_ke^{-\la_0 t}\|w_{k}f_0\|_{L^\infty_{v}L^2_x}+\|I_6(t,v)\|_{L^2_x}.
\end{align}
For $\|I_6(t,v)\|_{L^2_x}$, it holds
\begin{align*}
	\|I_6(t,v)\|_{L^2_x}=&\int_0^t e^{-\nu(v)(t-s)}\int_{\R^3}\int_{\R^3}\left|k_{w_k}(v,\eta)k_{w_k}(\eta,\xi)\right|\notag\\
	&\qquad\quad\times\int_0^s e^{-\nu(\eta)(s-s_1)}\big(\int_{\R^3}\left|w_{k}(\xi)f(s_1,y,\xi)\right|^2dy\big)^\frac{1}{2}d\xi ds_1d\eta  ds.
\end{align*}
Notice that similarly to the first three cases in the proof of \eqref{linearLinf}, we can obtain $C_k\sup_{0\leq s\leq t}\|(1+s)^\frac{3}{4}w_{k}f(s)\|_{L^\infty_vL^2_x}$ from the integral. Then the rest part is independent of the space variable so that we can get $\frac{C_k}{N}(1+t)^{-\frac{3}{4}}$ and $C_k \la(1+t)^{-\frac{3}{4}}$ as in \eqref{HC1}, \eqref{HC2} and \eqref{HC3}. From this observation, we consider $\|I_6(t,v)\|_{L^2_x}$ in two cases.

\medskip
\noindent{\it Case 1. } $|v|\geq N$ or $|v|\leq N$, $|\eta|\geq 2N$ or $|\eta|\leq2N$, $|\xi|\geq3N$ or $s-s_1\leq\la$. It holds from similar arguments as in \eqref{HC1}, \eqref{HC2} and \eqref{HC3} that
\begin{align}\label{HC21}
	\dis \|I_6(t,v)\|_{L^2_x}&\leq  \big(\frac{C_k}{N}+C_k\la\big)(1+t)^{-\frac{3}{4}}\sup_{0\leq s\leq t}\|(1+s)^\frac{3}{4}w_{k}f(s)\|_{L^\infty_vL^2_x}.
\end{align}

\medskip
\noindent{\it Case 2. } $|v|\leq N$, $|\eta|\leq2N$, $|\xi|\leq3N$, $s-s_1\geq\la$. We use the same approximation function $k_N$ in \eqref{app} to get
\begin{align}\label{HC22}
	\dis \|I_6(t,v)\|_{L^2_x}&\leq \frac{C_k}{N}(1+t)^{-\frac{3}{4}}\sup_{0\leq s\leq t}\|(1+s)^\frac{3}{4}w_{k}f(s)\|_{L^\infty_vL^2_x}\notag\\
	&\quad+\int^t_0 e^{-\nu_0(t-s)}\iint_{|\eta|\leq2N,|\xi|\leq3N}\left|k_{N}(v,\eta)k_{N}(\eta,\xi)\right| \notag\\
	&\qquad\qquad\times\int^{s-\la}_0e^{-\nu_0(s-s_1)}\big(\int_{\R^3}\left|f(s_1,y,\xi)\right|^2dy\big)^\frac{1}{2}ds_1d\eta d\xi  ds.
\end{align}
Using the property of $k_N$, Cauchy-Schwarz's inequality and \eqref{linearL2}, we have
\begin{align}\label{HC221}
	\dis &\int^t_0 e^{-\nu_0(t-s)}\iint_{|\eta|\leq2N,|\xi|\leq3N}\left|k_{N}(v,\eta)k_{N}(\eta,\xi)\right| \int^{s-\la}_0e^{-\nu_0(s-s_1)}\big(\int_{\R^3}\left|f(s_1,y,\xi)\right|^2dy\big)^\frac{1}{2}ds_1d\eta d\xi  ds\notag\\
	&\leq C_{k,N}\int^t_0 e^{-\nu_0(t-s)}\int^{s-\la}_0e^{-\nu_0(s-s_1)}\int_{|\xi|\leq3N} \big(\int_{\R^3}\left|f(s_1,y,\xi)\right|^2dy\big)^\frac{1}{2}d\xi ds_1 ds\notag\\
	&\leq C_{k,N}\int^t_0 e^{-\nu_0(t-s)}\int^{s-\la}_0e^{-\nu_0(s-s_1)}\big(\int_{|\xi|\leq3N} \int_{\R^3}\left|f(s_1,y,\xi)\right|^2dyd\xi\big)^\frac{1}{2} ds_1 ds\notag\\
	&\leq C_{k,N,\la}(1+t)^{-\frac{3}{4}}(\|f_0\|_{L^2_{x,v}}+\|f_0\|_{L^2_vL^1_x}).
\end{align}
Then for Case $2$, noticing $\|f_0\|_{L^2_{x,v}}\leq C\|w_k f_0\|_{L^\infty_vL^2_x}$ for $k\geq k_0>3$, it holds from \eqref{HC22} and \eqref{HC221} that
\begin{align}\label{HC220}
	\|I_6(t,v)\|_{L^2_x}&\leq  \frac{C_k}{N}(1+t)^{-\frac{3}{4}}\sup_{0\leq s\leq t}\|(1+s)^\frac{3}{4}w_{k}f(s)\|_{L^\infty_vL^2_x}\notag\\
	&\qquad\qquad\qquad\qquad+C_{k,N,\la}(1+t)^{-\frac{3}{4}}(\|w_{k}f_0\|_{L^\infty_vL^2_x}+\|f_0\|_{L^2_vL^1_x}).
\end{align}
We deduce from \eqref{wf2}, \eqref{HC21}, \eqref{HC220} that
\begin{align*}
	\|w_{k}f(t)\|_{L^\infty_vL^2_x}&\leq C_ke^{-\la_0 t}\|w_{k}f_0\|_{L^\infty_{v}L^2_x}\\
	&\quad+ \big(\frac{C_k}{N}+C_k\la\big)(1+t)^{-\frac{3}{4}}\sup_{0\leq s\leq t}\|(1+s)^\frac{3}{4}w_{k}f(s)\|_{L^\infty_vL^2_x}\notag\\
	&\quad+C_{k,N,\la}(1+t)^{-\frac{3}{4}}(\|w_{k}f_0\|_{L^\infty_vL^2_x}+\|f_0\|_{L^2_vL^1_x}).
\end{align*}
Hence, by choosing $N$ large and $\la$ small, one gets \eqref{linearLinfL2} and the proof of Lemma \ref{lem4.2} is then complete.
\end{proof}

\subsection{Estimate on $g_2$}
To take care of the inhomogeneous term in the  equation \eqref{g2}, it is also necessary to estimate the operator $\CK_b$.

\begin{lemma}\label{leKb}
	Let $0\leq \ga\leq 1$ and $k>3+\ga$. Let the operator $\CK_b$ be defined in \eqref{defKb} with the constant $M=M(k)$ chosen in \eqref{chooseM}. Then, for any  $1\leq p\leq \infty$ and $j\geq 0$, it holds that
	\begin{align}
	&\|w_{j} \CK_bf\|_{L^\infty_vL^p_x}\leq C_{j,k}\|w_{k}f\|_{L^\infty_vL^p_x}\label{Kblp}.	
	\end{align}
\end{lemma}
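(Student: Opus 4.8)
The plan is to exploit the velocity localization $\chi_{\{|v|<M\}}$ built into $\CK_b$ in order to absorb the weight $w_j(v)$ and the Gaussian factor $\mu^{-1/2}(v)$ into a harmless constant depending only on $M=M(k)$, and then to bound the remaining quantity $\|\CK f(\cdot,v)\|_{L^p_x}$ pointwise in $v$ by $C_k\nu(v)\|w_kf\|_{L^\infty_vL^p_x}$ by means of Lemma \ref{leQ}.

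First I would start from the representation \eqref{DefCK}. Since $\CK$ acts only on the velocity variable, so that the arguments $v'$, $u'$, $u$ all occur at the same spatial point $x$, Minkowski's integral inequality gives, for every $1\leq p\leq\infty$,
\begin{equation*}
\|\CK f(\cdot,v)\|_{L^p_x}\leq C\int_{\R^3}\int_{\S^2}|v-u|^\ga\Big(\mu(u')\|f(\cdot,v')\|_{L^p_x}+\mu(v')\|f(\cdot,u')\|_{L^p_x}+\mu(v)\|f(\cdot,u)\|_{L^p_x}\Big)\,d\omega\,du.
\end{equation*}
Inserting the weight via $\|f(\cdot,v')\|_{L^p_x}\leq w_k(v')^{-1}\|w_kf\|_{L^\infty_vL^p_x}$, and likewise for the $u'$- and $u$-terms, reduces matters to the three velocity integrals
\begin{equation*}
\int_{\R^3}\int_{\S^2}\frac{|v-u|^\ga\mu(u')}{w_k(v')}\,d\omega\,du,\qquad\int_{\R^3}\int_{\S^2}\frac{|v-u|^\ga\mu(v')}{w_k(u')}\,d\omega\,du,\qquad\mu(v)\int_{\R^3}\int_{\S^2}\frac{|v-u|^\ga}{w_k(u)}\,d\omega\,du.
\end{equation*}

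Next I would bound these three integrals. For the first, multiplying and dividing by $w_k(v)$ and applying \eqref{controlK} gives a bound $\leq C_k\nu(v)w_k(v)^{-1}\leq C_k\nu(v)$; the second is handled identically after the rotation exchanging $v'$ and $u'$, exactly as in the treatment of the term $H_2$ in the proof of Lemma \ref{leKs}; for the third, the assumption $k>3+\ga$ guarantees $\int_{\R^3}|v-u|^\ga w_k(u)^{-1}\,du\leq C_k\nu(v)$, so it is $\leq C_k\mu(v)\nu(v)\leq C_k\nu(v)$. Combining the three estimates, $\|\CK f(\cdot,v)\|_{L^p_x}\leq C_k\nu(v)\|w_kf\|_{L^\infty_vL^p_x}$ for all $v\in\R^3$.

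Finally, on the support $\{|v|<M\}$ one has $\nu(v)\leq C$, $w_j(v)\leq(1+M)^j$, and $\mu^{-1/2}(v)\leq(2\pi)^{3/4}e^{M^2/4}$, each bounded by a constant depending only on $j$ and $M=M(k)$; hence multiplying the pointwise bound by $\chi_{\{|v|<M\}}w_j(v)\mu^{-1/2}(v)$ and taking the supremum over $v$ yields \eqref{Kblp}. There is no genuine obstacle here: the only substantive ingredient is the control of the three velocity integrals, which is immediate from Lemma \ref{leQ}, and the dependence on $p$ enters solely through Minkowski's inequality, so the same argument works uniformly for $1\leq p\leq\infty$.
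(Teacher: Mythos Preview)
Your proof is correct and follows essentially the same route as the paper: both arguments apply Minkowski's inequality in $L^p_x$, insert the weight $w_k$ to extract $\|w_kf\|_{L^\infty_vL^p_x}$, control the three resulting velocity integrals via \eqref{controlK} together with the elementary bound on $\int_{\R^3}|v-u|^\ga w_k(u)^{-1}\,du$, and finally absorb $w_j(v)\mu^{-1/2}(v)$ into a constant using the cutoff $|v|<M$. If anything, your write-up is slightly more explicit than the paper's about the role of Minkowski's inequality and about why the three integrals are each $O(\nu(v))$.
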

\begin{proof}
	Recalling the definitions of $\CK_b$ in \eqref{defKb} and $\CK$ in \eqref{DefCK}, by the fact that $w_{j}(v)\chi_{\{|v|< M\}}\leq C_{j,k}w_{k}(v)$, one has
\begin{align}\label{Kb2}
	\big\|w_{j}(v)\CK_bf(v)\big\|_{L^p_x}&=\big|\chi_{\{|v|< M\}}\mu^{-1/2}(v)w_{j}(v) \big\|\CK f(v)\big\|_{L^p_x}\big|\notag\\
	 &\leq 	C_{j,k}\|w_{k}f\|_{L^\infty_{v}L^p_x} \chi_{\{|v|< M\}} \notag\\
	 &\quad\times\int_{\R^3}\int_{\S^2}|v-u|^\ga w_k(v)\big( \frac{\mu(u')}{w_{k}(v')}+\frac{\mu(v')}{w_{k}(u')}+\frac{\mu(v)}{w_{k}(u)}\big)d\omega du.
 	\end{align}
By \eqref{controlK} and the fact that $\int_{\R^3}|v-u|^\ga w_k(v)\frac{\mu(v)}{w_{k}(u)}du\leq C_k$, it holds that
\begin{align*}
	\big\|w_{j}(v)\CK_bf(v)\big\|_{L^p_x}&\leq C_{j,k}\|w_{k}f\|_{L^\infty_vL^p_x},
\end{align*}
which yields \eqref{Kblp} and completes the proof of Lemma \ref{leKb}.
\end{proof}

Now, with the properties for the equation \eqref{linearf} and $\CK_b$ obtained above, we deduce the estimates for $g_2$. Denote $U(t)$ to be the solution operator for the problem \eqref{linearf}.
Then, by the Duhamel Principle,
\begin{align}\label{reg2}
g_2(t)=\int_0^tU(t-s)\CK_b g_1(s)\,ds.
\end{align}

\begin{lemma}\label{leg2hard}
	Let $0\leq\gamma\leq 1$, then there is $k_0>0$ large enough such that for any $k\geq k_0$, there is $M>0$ for the decomposition $\CK=\CK_s+\sqrt{\mu}\CK_b$  such that the following estimates hold:
	\begin{align}
		\|w_{k}g_2(t)\|_{L^\infty_{x,v}}&\leq C_k(1+t)^{-\frac{3}{4}}\left\{\|w_kg_{10}\|_{L^\infty_{x,v}}+\|w_{k}g_{10}\|_{L^\infty_vL^2_x}+\|w_{k}g_{10}\|_{L^\infty_vL^1_x}\right\}\notag\\
		&\quad+C_k(1+t)^{-\frac{3}{4}}\left\{\sup_{0\leq s\leq t}\|(1+s)^\frac{3}{4}w_kg(s)\|^2_{L^\infty_{x,v}}\right.\notag\\
		&\qquad\qquad\qquad\qquad\qquad\left.+\sup_{0\leq s\leq t}\|(1+s)^\frac{3}{4}w_{k}g(s)\|^2_{L^\infty_vL^2_x}\right\},\label{g2Linf}
	\end{align}
	and
	\begin{align}
				\|w_{k}g_2(t)\|_{L^\infty_vL^2_x}&\leq C_k(1+t)^{-\frac{3}{4}}\left\{\|w_{k}g_{10}\|_{L^\infty_vL^2_x}+\|w_{k}g_{10}\|_{L^\infty_vL^1_x}\right\}\notag\\
		&\quad+C_k(1+t)^{-\frac{3}{4}}\left\{\sup_{0\leq s\leq t}\|(1+s)^\frac{3}{4}w_kg(s)\|^2_{L^\infty_{x,v}}\right.\notag\\
		&\qquad\qquad\qquad\qquad\qquad\left.+\sup_{0\leq s\leq t}\|(1+s)^\frac{3}{4}w_{k}g(s)\|^2_{L^\infty_vL^2_x}\right\}.\label{g2LinfL2}
	\end{align}
\end{lemma}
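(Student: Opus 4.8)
The plan is to start from the Duhamel representation \eqref{reg2}, namely $g_2(t)=\int_0^t U(t-s)\CK_b g_1(s)\,ds$, and to combine the decay estimates for the symmetric semigroup $U(t)$ from Lemma \ref{lem4.2}, the bound on $\CK_b$ from Lemma \ref{leKb}, and the already-established decay of $g_1$ from Lemma \ref{leg1hard}. First I would apply the $L^\infty_{x,v}$ estimate \eqref{linearLinf} with initial datum $\CK_b g_1(s)$ in place of $f_0$, which gives
\begin{align*}
\|w_k g_2(t)\|_{L^\infty_{x,v}}\leq C_k\int_0^t (1+t-s)^{-\frac34}\Big(\|w_k\CK_b g_1(s)\|_{L^\infty_{x,v}}+\|\CK_b g_1(s)\|_{L^2_{x,v}}+\|\CK_b g_1(s)\|_{L^2_vL^1_x}\Big)\,ds.
\end{align*}
Next, using \eqref{Kblp} with $p=\infty$, $2$, $1$ (together with the trivial embeddings $\|h\|_{L^2_{x,v}}\le C\|w_k h\|_{L^\infty_v L^2_x}$ for $k>3/2$ and $\|h\|_{L^2_v L^1_x}\le C\|w_k h\|_{L^\infty_v L^1_x}$, noting $\CK_b g_1$ is supported in $|v|<M$ so these are finite), I would bound the integrand by $C_k\big(\|w_k g_1(s)\|_{L^\infty_{x,v}}+\|w_k g_1(s)\|_{L^\infty_v L^2_x}+\|w_k g_1(s)\|_{L^\infty_v L^1_x}\big)$.

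Then I would insert the three bounds \eqref{g1infty}, \eqref{g1L2}, \eqref{g1L1} from Lemma \ref{leg1hard}, each of which has the form $C(1+s)^{-3/2}(\text{data})+C_k(1+s)^{-3/2}(\text{quadratic sup norms})$. This reduces everything to estimating $\int_0^t (1+t-s)^{-3/4}(1+s)^{-3/2}\,ds$, which by \eqref{tdeday} (the case $r=3/2>1$, $q=3/4$) is bounded by $C(1+t)^{-3/4}$. Collecting the data terms gives the combination $\|w_k g_{10}\|_{L^\infty_{x,v}}+\|w_k g_{10}\|_{L^\infty_v L^2_x}+\|w_k g_{10}\|_{L^\infty_v L^1_x}$, and the quadratic terms give $\sup_{0\le s\le t}\|(1+s)^{3/4}w_k g(s)\|_{L^\infty_{x,v}}^2+\sup_{0\le s\le t}\|(1+s)^{3/4}w_k g(s)\|_{L^\infty_v L^2_x}^2$, yielding \eqref{g2Linf}. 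The proof of \eqref{g2LinfL2} is entirely parallel: apply \eqref{linearLinfL2} instead, which only produces $\|w_k\CK_b g_1(s)\|_{L^\infty_v L^2_x}+\|\CK_b g_1(s)\|_{L^2_v L^1_x}$, so the $L^\infty_{x,v}$ data term of $g_{10}$ does not appear (consistent with the stated estimate); then use \eqref{Kblp} with $p=2,1$, followed by \eqref{g1L2}, \eqref{g1L1} and again \eqref{tdeday}.

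The only genuinely delicate point is the time-convolution bookkeeping: one must check that the slow polynomial rates combine correctly, i.e.\ that $(1+t-s)^{-3/4}\ast (1+s)^{-3/2}$ decays at the rate $(1+t)^{-3/4}$ rather than worse — this is exactly the regime $r>1$ in Lemma \ref{timedecay}, so there is no loss and no logarithm. A minor technical caveat is that $\CK_b g_1(s)$ must lie in $L^2_{x,v}\cap L^2_v L^1_x$ for each fixed $s$ so that Lemma \ref{lem4.2} applies; this holds because the cutoff $\chi_{\{|v|<M\}}$ makes the velocity integration compact, converting the $L^\infty_v$ control of $w_k g_1$ into the required $L^2_v$ integrability, with constants depending on $M=M(k)$ and hence absorbable into $C_k$. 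Everything else is a routine repetition of the scheme used in Lemma \ref{leg1hard}.
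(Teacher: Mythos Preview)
Your proposal is correct and follows essentially the same approach as the paper: Duhamel representation \eqref{reg2}, then Lemma \ref{lem4.2} on $U(t-s)$, then \eqref{Kblp} and the embedding $\|h\|_{L^2_v}\le C\|w_k h\|_{L^\infty_v}$ to reduce to the $L^\infty_v L^p_x$ norms of $g_1$, then Lemma \ref{leg1hard}, and finally \eqref{tdeday} with $r=3/2>1$, $q=3/4$. Your additional remark that the velocity cutoff in $\CK_b$ guarantees the $L^2_v$ integrability needed to invoke Lemma \ref{lem4.2} is a helpful clarification that the paper leaves implicit.
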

\begin{proof}
	We use \eqref{reg2} and \eqref{linearLinf} to get
	\begin{align*}
		|w_{k}g_2(t)|&\leq\int_0^t|w_kU(t-s)(\CK_b g_1)(s)|ds\notag\\
		&\leq C_k\int_0^t(1+t-s)^{-\frac{3}{4}}(\|w_{k}\CK_b g_1(s)\|_{L^\infty_{x,v}}+\|\CK_b g_1(s)\|_{L^2_{x,v}}+\|\CK_b g_1(s)\|_{L^2_vL^1_x})ds.
	\end{align*}
Noticing the fact that $\|f\|_{L^2}\leq C\|w_{k}f\|_{L^\infty}$ for $k\geq k_0>3$, we have
	\begin{align*}
	|w_{k}g_2(t)|\leq C_k\int_0^t(1+t-s)^{-\frac{3}{4}}(\|w_{k}\CK_b g_1(s)\|_{L^\infty_{x,v}}+\|w_{k}\CK_b g_1(s)\|_{L^\infty_vL^2_x}+\|w_{k}\CK_b g_1(s)\|_{L^\infty_vL^1_x})ds.
\end{align*}
 Then it follows from \eqref{Kblp} that
	\begin{align*}
	|w_{k}g_2(t)|&\leq C_k\int_0^t(1+t-s)^{-\frac{3}{4}}(\|w_{k}g_1(s)\|_{L^\infty_{x,v}}+\|w_{k} g_1(s)\|_{L^\infty_vL^2_x}+\|w_{k}g_1(s)\|_{L^\infty_vL^1_x})ds.
\end{align*}
Recalling our estimate for $g_1$ \eqref{g1infty}, \eqref{g1L2} and \eqref{g1L1}, it holds that
	\begin{align}\label{g2infhard}
	|w_{k}g_2(t)|&\leq C_k\int_0^t(1+t-s)^{-\frac{3}{4}}(1+s)^{-\frac{3}{2}}\big(\|w_{k}g_{10}\|_{L^\infty_{x,v}}+\|w_{k} g_{10}\|_{L^\infty_vL^2_x}+\|w_{k}g_{10}\|_{L^\infty_vL^1_x}\big)ds\notag\\
	&\qquad+C_k\int_0^t(1+t-s)^{-\frac{3}{4}}(1+s)^{-\frac{3}{2}}\notag\\
	&\qquad\qquad\qquad\times \left\{\sup_{0\leq s\leq t}\|(1+s)^\frac{3}{4}w_kg(s)\|^2_{L^\infty_{x,v}}+\sup_{0\leq s\leq t}\|(1+s)^\frac{3}{4}w_{k}g(s)\|^2_{L^\infty_vL^2_x}\right\}ds.
\end{align}
Thus, \eqref{g2Linf} follows by \eqref{tdeday} and \eqref{g2infhard}.

In a very similar way, by \eqref{reg2}, \eqref{linearLinfL2}, \eqref{Kblp} \eqref{g1L2} and \eqref{g1L1}, we have
	\begin{align*}
	\|w_{k}g_2(t)\|_{L^\infty_vL^2_x}&\leq\int_0^t\|w_kU(t-s)(\CK_b g_1)(s)\|_{L^\infty_vL^2_x}ds\notag\\
	&\leq C_k\int_0^t(1+t-s)^{-\frac{3}{4}}(\|w_{k}\CK_b g_1(s)\|_{L^\infty_vL^2_x}+\|w_{k}\CK_b g_1(s)\|_{L^\infty_vL^1_x})ds\notag\\
	&\leq C_k\int_0^t(1+t-s)^{-\frac{3}{4}}(1+s)^{-\frac{3}{2}}\big(\|w_{k} g_{10}\|_{L^\infty_vL^2_x}+\|w_{k}g_{10}\|_{L^\infty_vL^1_x}\big)ds\notag\\
	&\qquad+C_k\int_0^t(1+t-s)^{-\frac{3}{4}}(1+s)^{-\frac{3}{2}}\notag\\
	&\qquad\qquad\times \left\{\sup_{0\leq s\leq t}\|(1+s)^\frac{3}{4}w_kg(s)\|^2_{L^\infty_{x,v}}+\sup_{0\leq s\leq t}\|(1+s)^\frac{3}{4}w_{k}g(s)\|^2_{L^\infty_vL^2_x}\right\}ds.
\end{align*}
Then one obtains \eqref{g2LinfL2}. The proof of Lemma \ref{leg2hard} is complete.
\end{proof}

\subsection{Proof of Theorem \ref{hard}}
With all the preparations, we are able to prove Theorem \ref{hard}.
\begin{proof}[Proof of Theorem \ref{hard}]
	Recall our definition for $g$ that $g=g_1+\sqrt{\mu}g_2$. Then it is straightforward to see that for $k\geq j\geq k_0$,
	\begin{align*}
		\|w_kg(t)\|_{L^\infty_{x,v}}&\leq \|w_kg_1(t)\|_{L^\infty_{x,v}}+\|w_k\sqrt{\mu}g_2(t)\|_{L^\infty_{x,v}}\notag\\
		&\leq \|w_kg_1(t)\|_{L^\infty_{x,v}}+C_{j,k}\|w_{j}g_2(t)\|_{L^\infty_{x,v}}.
	\end{align*}
We have from \eqref{g1infty}, \eqref{g2Linf} and the condition $j\leq k$ that
\begin{align}\label{gLinf}
	\|w_kg(t)\|_{L^\infty_{x,v}}&\leq C_{j,k}(1+t)^{-\frac{3}{4}}\left\{\|w_kg_{10}\|_{L^\infty_{x,v}}+\|w_{j}g_{10}\|_{L^\infty_vL^2_x}+\|w_{j}g_{10}\|_{L^\infty_vL^1_x}\right\}\notag\\
	&\quad+C_{j,k}(1+t)^{-\frac{3}{4}}\left\{\sup_{0\leq s\leq t}\|(1+s)^\frac{3}{4}w_kg(s)\|^2_{L^\infty_{x,v}}+\sup_{0\leq s\leq t}\|(1+s)^\frac{3}{4}w_{j}g(s)\|^2_{L^\infty_vL^2_x}\right\}.
\end{align}
Due to the $L^\infty_vL^2_x$ norm in \eqref{gLinf}, we need to estimate $\sup_{0\leq s\leq t}\|(1+s)^\frac{3}{4}w_{j}g(s)\|^2_{L^\infty_vL^2_x}$. By \eqref{g1L2}, \eqref{g2LinfL2} and the condition $j\leq k$, one has
\begin{align}\label{gLinfL2}
	\|w_{j}g(t)\|_{L^\infty_vL^2_x}&\leq C_j(1+t)^{-\frac{3}{4}}\left\{\|w_{j}g_{10}\|_{L^\infty_vL^2_x}+\|w_{j}g_{10}\|_{L^\infty_vL^1_x}\right\}\notag\\
	&\quad+C_j(1+t)^{-\frac{3}{4}}\left\{\sup_{0\leq s\leq t}\|(1+s)^\frac{3}{4}w_kg(s)\|^2_{L^\infty_{x,v}}+\sup_{0\leq s\leq t}\|(1+s)^\frac{3}{4}w_{j}g(s)\|^2_{L^\infty_vL^2_x}\right\}.
\end{align}
Combining \eqref{gLinf} and \eqref{gLinfL2}, we obtain
\begin{align}\label{ghard}
	&\sup_{0\leq s\leq t}\|(1+s)^\frac{3}{4}w_kg(s)\|_{L^\infty_{x,v}}+\sup_{0\leq s\leq t}\|(1+s)^\frac{3}{4}w_{j}g(s)\|_{L^\infty_vL^2_x}\notag\\
	&\leq C_{j,k}\left\{\|w_kg_{10}\|_{L^\infty_{x,v}}+\|w_{j}g_{10}\|_{L^\infty_vL^2_x}+\|w_{j}g_{10}\|_{L^\infty_vL^1_x}\right\}\notag\\
	&\quad+C_{j,k}\left\{\sup_{0\leq s\leq t}\|(1+s)^\frac{3}{4}w_kg(s)\|^2_{L^\infty_{x,v}}+\sup_{0\leq s\leq t}\|(1+s)^\frac{3}{4}w_{j}g(s)\|^2_{L^\infty_vL^2_x}\right\}.
\end{align}
Recall our definitions of $\|\cdot\|_{X_{j,k}}$ in \eqref{defX} and $\|\cdot\|_{Y_{j,k}}$ in \eqref{defY}. By the local-in-time existence together with the continuity argument, from \eqref{ghard}, \eqref{HE} follows by \eqref{smallnesshard} for a small constant $\ep_0$ which depends on $j$ and $k$. Hence,  the global existence is established and the proof of Theorem \ref{hard} is complete.
\end{proof}

\section{Soft potential case}\label{sec5}
Compared to hard potential case, it is more complicated to prove Theorem \ref{soft} in soft potentials. One of the main difficulties in this case is that \eqref{decay} holds only for $0\leq\ga\leq1$ and we have to use \eqref{decays} instead, which requires us to choose the index $r$ in \eqref{decays} more carefully. Furthermore, we still need to analyze the equation \eqref{linearf}. In soft potential case, the large velocity decay that the operator $K$ provides is not strong enough, so we have to split it into $K^m$ and $K^c$ as mentioned before. Also, the collision frequency $\nu(v)\sim(1+|v|)^\ga$ has no strictly positive lower bound and the spectral gap of the linearized operator vanishes when $-3<\ga<0$.

To carry out the proof in soft potentials, we still start from the decomposed sytem \eqref{g1} and \eqref{g2} with the initial data \eqref{initial}.

\subsection{Estimates on $g_1$}
We study $g_1$ in the similar way as in Lemma \ref{leg1hard}, but for $-3<\ga<0$, there are more restrictions than the previous lemma.

\begin{lemma}\label{leg1soft}
	Let $-3<\ga<0$ and $0<\ep\leq\frac{1}{2}$. There is $k_0>0$ large enough such that for any $k\geq k_0$, there is a constant $M>0$ for the decomposition $\CK=\CK_s+\sqrt{\mu}\CK_b$  such that the following estimates hold:
	\begin{align}
		\|w_kg_1(t)\|_{L^\infty_{x,v}}\leq &C_\ep(1+t)^{-1+\ep}\|w_{k+|\ga|}g_{10}\|_{L^\infty_{x,v}}\notag\\
		&\qquad\qquad\quad\qquad+C_{\ep,k}(1+t)^{-1+\ep}\sup_{0\leq s\leq t}\|(1+s)^{\frac{3}{4}-\ep}w_kg(s)\|^2_{L^\infty_{x,v}},\label{sg1infty}\\
		\|w_{k}g_1(t)\|_{L^\infty_vL^2_x}&\leq C_\ep(1+t)^{-1+\ep}\|w_{k+|\ga|}g_{10}\|_{L^\infty_vL^2_x}\notag\\
		+C_{\ep,k}&(1+t)^{-1+\ep}\left\{\sup_{0\leq s\leq t}\|(1+s)^{\frac{3}{4}-\ep}w_kg(s)\|^2_{L^\infty_{x,v}}+\sup_{0\leq s\leq t}\|(1+s)^{\frac{3}{4}-\ep}w_{k}g(s)\|^2_{L^\infty_vL^2_x}\right\},\label{sg1L2}\\
		\|w_{k}g_1(t)\|_{L^\infty_vL^1_x}&\leq C_\ep(1+t)^{-1+\ep}\|w_{k+|\ga|}g_{10}\|_{L^\infty_vL^1_x}\notag\\
		&\qquad\qquad\quad\qquad+C_{\ep,k}(1+t)^{-1+\ep}\sup_{0\leq s\leq t}\|(1+s)^{\frac{3}{4}-\ep}w_{k}g(s)\|^2_{L^\infty_vL^2_x}\label{sg1L1},
	\end{align}
where $C_\ep$ depends only on $\ep$ and $C_{\ep,k}$ depends only on $\ep$ and $k$.
\end{lemma}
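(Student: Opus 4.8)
The plan is to imitate the proof of Lemma~\ref{leg1hard}, replacing the exponential-decay ingredients of the hard case by their degenerate soft-potential analogues. Fix $\ep\in(0,\tfrac12]$ once and for all; all constants below are permitted to depend on $\ep$. Integrating \eqref{g1} along the backward characteristic one obtains, exactly as in the hard case, the representation
$g_1(t,x,v)=e^{-\nu(v)t}g_{10}(x-vt,v)+\int_0^te^{-\nu(v)(t-s)}\CK_sg_1(s,x-v(t-s),v)\,ds+\int_0^te^{-\nu(v)(t-s)}Q(g,g)(s,x-v(t-s),v)\,ds$.
The new feature is that $\nu(v)\sim(1+|v|)^\ga$ has no positive lower bound, so $e^{-\nu(v)t}$ no longer decays exponentially and one must pay $|\ga|$ extra velocity weights on the data.

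For the free term I would use the elementary bound $\nu e^{-\nu t}\le C_\ep(1+t)^{-(1-\ep)}$, valid uniformly for $\nu\in(0,\nu_{\max}]$ and $t\ge0$ because $x^{1-\ep}e^{-x}$ is bounded and $\nu(v)$ is bounded above, together with $w_k(v)=w_{k+|\ga|}(v)(1+|v|)^{\ga}\le C\,w_{k+|\ga|}(v)\nu(v)$ coming from \eqref{Defnu}; this gives $w_k(v)e^{-\nu(v)t}\le C_\ep(1+t)^{-(1-\ep)}w_{k+|\ga|}(v)$, hence the $g_{10}$-terms on the right of \eqref{sg1infty}--\eqref{sg1L1} after taking the $L^\infty_{x,v}$, $L^\infty_vL^2_x$ or $L^\infty_vL^1_x$ norm (using translation invariance in $x$ for the latter two). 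For the $\CK_s$ term, \eqref{smallCK} yields $\chi_{\{|v|\ge M\}}w_k(v)|\CK g_1(v)|\le(\frac{C}{k^{(\ga+3)/4}}+\frac{C_k}{M^2})\nu(v)\|w_kg_1\|_{L^\infty_v}$; writing $\|w_kg_1(s)\|\le(1+s)^{-(1-\ep)}\sup_{0\le\tau\le s}(1+\tau)^{1-\ep}\|w_kg_1(\tau)\|$ and invoking \eqref{decays} with $r=1-\ep\ne1$, which is legitimate precisely because $-3<\ga<0$, this term is bounded by $(\frac{C}{k^{(\ga+3)/4}}+\frac{C_k}{M^2})C_\ep(1+t)^{-(1-\ep)}\sup_{0\le\tau\le t}(1+\tau)^{1-\ep}\|w_kg_1(\tau)\|_{L^\infty_{x,v}}$. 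One then first chooses $k_0=k_0(\ep)$ so large that $\frac{C}{k^{(\ga+3)/4}}C_\ep\le\frac14$ for $k\ge k_0$, and then $M=M(k,\ep)$ so large that $\frac{C_k}{M^2}C_\ep\le\frac14$, making the prefactor $\le\frac12$.

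For the nonlinear term, the kernel estimates \eqref{controlK}, \eqref{controlQ} and $\int_{\R^3}|v-u|^\ga w_k(u)^{-1}\,du\le C_k\nu(v)$ (valid for $k\ge k_0>3+\ga$) give $|w_k(v)Q(g,g)(s,x,v)|\le C_k\nu(v)\|w_kg(s)\|^2_{L^\infty_{x,v}}$; since $\ep\le\tfrac12$ we may write $\|w_kg(s)\|^2_{L^\infty_{x,v}}\le(1+s)^{-(3/2-2\ep)}\sup_{0\le\tau\le s}\|(1+\tau)^{3/4-\ep}w_kg(\tau)\|^2_{L^\infty_{x,v}}\le(1+s)^{-(1-\ep)}\sup_{0\le\tau\le s}\|(1+\tau)^{3/4-\ep}w_kg(\tau)\|^2_{L^\infty_{x,v}}$, and \eqref{decays} with $r=1-\ep$ again produces the $(1+t)^{-(1-\ep)}$ factor. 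Collecting the three contributions, multiplying by $(1+t)^{1-\ep}$ and taking the supremum in $t$, the $\CK_s$ contribution $\tfrac12\sup(1+\tau)^{1-\ep}\|w_kg_1(\tau)\|$ is absorbed into the left-hand side, and \eqref{sg1infty} follows. The estimate \eqref{sg1L2} is obtained the same way, except that the bilinear term must be split by Cauchy--Schwarz into a product of an $L^\infty_vL^2_x$ factor and an $L^\infty_{x,v}$ factor (and then symmetrized by AM--GM), which is why both norms of $g$ appear on the right; \eqref{sg1L1} uses the pointwise-in-$x$ bound $\int_{\R^3}|(w_kg)(s,x,\xi)||(w_kg)(s,x,\eta)|\,dx\le\|w_kg(s)\|^2_{L^\infty_vL^2_x}$, so only the $L^\infty_vL^2_x$ norm enters — exactly as in Lemma~\ref{leg1hard}.

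The main obstacle is the degeneracy of $\nu$ in large velocities. It forces the loss of $|\ga|$ weights on the data through the free term and, more importantly, it is why only \eqref{decays}, with its cap $\min\{r,1\}$, is available in place of \eqref{decay}: this caps the attainable decay of $g_1$ at $(1+t)^{-(1-\ep)}$ and pins the exponent to $r=1-\ep$ to avoid the excluded value $r=1$. The hypothesis $\ep\le\tfrac12$ is used precisely to guarantee $3/2-2\ep\ge1-\ep$, so that the genuinely nonlinear contribution does not decay slower than the linear one and the bootstrap closes with the single time weight $(1+t)^{1-\ep}$ on both sides. Note that, unlike the analysis of \eqref{g2}, no splitting $K=K^m+K^c$ is needed here, since \eqref{g1} involves only $\CK_s$ and $Q$, both controlled by \eqref{smallCK} and \eqref{controlK}--\eqref{controlQ}.
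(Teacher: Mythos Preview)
Your proposal is correct and follows essentially the same route as the paper: the same Duhamel representation, the same pointwise bound $w_k(v)e^{-\nu(v)t}\le C_\ep(1+t)^{-(1-\ep)}w_{k+|\ga|}(v)$ for the free term, the same use of \eqref{smallCK} plus \eqref{decays} at $r=1-\ep$ with absorption after choosing $k_0$ then $M$, and the same treatment of the nonlinear term via \eqref{controlQ} together with $(1+s)^{-(3/2-2\ep)}\le C(1+s)^{-(1-\ep)}$ for $\ep\le\tfrac12$. Your remarks on why $r=1$ must be avoided and why no $K^m/K^c$ splitting is needed here are exactly the relevant points.
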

\begin{proof}
	We first choose $k_0>3$ such that Lemma \ref{leKs} and Lemma \ref{leQ} hold for $k\geq k_0$. The mild form of $g_1$ \eqref{mildg1} is still valid. Using the fact that
	\begin{align}\label{estesoft}
	e^{-\nu(v)t}\leq \frac{C_\ep}{|\nu(v)(1+t)|^{1-\ep}},
	\end{align}
	one gets that
	\begin{align}\label{sg11}
		\dis \left|w_k(v)g_1(t,x,v)\right|\leq&C_\ep(1+t)^{-1+\ep}\|w_{k+|\ga|}g_{10}\|_{L^\infty_{x,v}}\notag\\
		&+\int_0^t e^{-\nu(v)(t-s)}\left|w_k(v)\CK_s g_1(s,x-v(t-s),v)\right|ds \notag\\
		&+\int_0^t e^{-\nu(v)(t-s)}\left|w_k(v)Q(g,g)(s,x-v(t-s),v)\right|ds\notag\\
		=&C_\ep(1+t)^{-1+\ep}\|w_{k+|\ga|}g_{10}\|_{L^\infty_{x,v}}+J_1(t,x,v)+J_2(t,x,v).
	\end{align}
By \eqref{smallCK} and \eqref{decays}, it holds that
\begin{align}\label{C2ep}
	\|J_1(t,v)\|_{L^\infty_x}\leq &C\sup_{0\leq s\leq t}\|(1+s)^{1-\ep}w_k g_1(s)\|_{L^\infty_{x,v}}\int_0^t e^{-\nu(v)(t-s)}\nu(v)(1+s)^{-1+\ep}\left(\frac{C}{k^{\frac{\ga+3}{2}}}+\frac{C_k}{M^2}\right)ds\notag\\
	\leq &C_{2\ep}\left(\frac{1}{k^{\frac{\ga+3}{2}}}+\frac{C_k}{M^2}\right)	(1+t)^{-1+\ep}\sup_{0\leq s\leq t}\|(1+s)^{1-\ep}w_k g_1(s)\|_{L^\infty_{x,v}},
\end{align}
where $C_{2\ep}$ depends only on $\ep$.
Let $k_0>(4C_{2\ep})^\frac{2}{\ga+3}$, it holds for $k\geq k_0$ that
$$
\frac{C_{2\ep}}{k^{\frac{\ga+3}{2}}}\leq \frac{1}{4}.
$$
Then we define
\begin{align}\label{Mep}
	M=2\sqrt{C_{2\ep}C_k}
\end{align}
to obtain
\begin{align}\label{J1}
	\|J_1(t,v)\|_{L^\infty_x}\leq\frac{1}{2}(1+t)^{-1+\ep}\sup_{0\leq s\leq t}\|(1+s)^{1-\ep}w_k g_1(s)\|_{L^\infty_{x,v}}.
\end{align}
For $J_2(t,x,v)$, using \eqref{controlQ} and \eqref{decays}, similar arguments as in \eqref{I51} and \eqref{I5} show that
\begin{align}\label{J2}
	|J_2(t,x,v)|&\leq \int_0^t e^{-\nu(v)(t-s)}\left|w_k(v)Q(g,g)(s,x-v(t-s),v)\right|ds\notag\\
	&\leq C\sup_{0\leq s\leq t}\|(1+s)^{\frac{3}{4}-\ep}w_kg(s)\|^2_{L^\infty_{x,v}}\int_0^t e^{-\nu(v)(t-s)}(1+s)^{-\frac{3}{2}+2\ep}\notag\\
	&\qquad\qquad\qquad\qquad\qquad\times\int_{\R^3}\int_{\S^2}|v-u|^\ga\left( \frac{w_k(v)}{w_k(u')w_k(v')}+\frac{1}{w_k(u)}\right) d\omega duds\notag\\
	&\leq C_{\ep,k}(1+t)^{-1+\ep}\sup_{0\leq s\leq t}\|(1+s)^{\frac{3}{4}-\ep}w_kg(s)\|^2_{L^\infty_{x,v}}.
\end{align}
The last inequality above holds by the fact that $(1+s)^{-\frac{3}{2}+2\ep}\leq C(1+s)^{-1+\ep}$ under the condition $0<\ep\leq \frac{1}{2}$. Collecting \eqref{sg11}, \eqref{J1} and \eqref{J2}, we have
\begin{align*}%\label{g1infty2}
	\dis \|w_kg_1(t)\|_{L^\infty_{x,v}}\leq&C_\ep(1+t)^{-1+\ep}\|w_{k+|\ga|}g_{10}\|_{L^\infty_{x,v}}+\frac{1}{2}(1+t)^{-1+\ep}\sup_{0\leq s\leq t}\|(1+s)^{1-\ep}w_k g_1(s)\|_{L^\infty_{x,v}} \notag\\
	&+C_{\ep,k}(1+t)^{-1+\ep}\sup_{0\leq s\leq t}\|(1+s)^{\frac{3}{4}-\ep}w_kg(s)\|^2_{L^\infty_{x,v}},
\end{align*}
which yields \eqref{sg1infty}.

Similarly as in \eqref{wk0g1L2}, \eqref{I5L2}, \eqref{C2ep} and \eqref{J1}, one gets
 \begin{align*}
	\dis \|w_{k}g_1(t)\|_{L^\infty_vL^2_x}\leq&C_\ep(1+t)^{-1+\ep}\|w_{k+|\ga|}g_{10}\|_{L^\infty_vL^2_x}\notag\\
	&+\frac{1}{2}(1+t)^{-1+\ep}\sup_{0\leq s\leq t}\|(1+s)^{1-\ep}w_{k} g_1(s)\|_{L^\infty_vL^2_x}\notag\\
	&+C\int_0^t e^{-\nu(v)(t-s)}(1+s)^{-1+\ep}\int_{\R^3}\int_{\S^2}|v-u|^\ga\left(\frac{w_{k}(v)}{w_{k}(v')w_{k}(u')}+\frac{1}{w_{k}(u)}\right)\notag\\
	&\qquad\quad\times \|(1+s)^{\frac{3}{4}-\ep}w_{k}g(s)\|_{L^\infty_vL^2_x}\|(1+s)^{\frac{3}{4}-\ep}w_{k}g(s)\|_{L^\infty_{x,v}}d\omega duds.
\end{align*}	
Then using Cauchy-Schwarz's inequality, \eqref{controlQ} and \eqref{decays} as in \eqref{I5L2}, we can obtain \eqref{sg1L2} in the following way:	
 \begin{align*}
\dis \|w_{k}g_1(t)\|_{L^\infty_vL^2_x}	\leq&C_\ep(1+t)^{-1+\ep}\|w_{k+(1-\ep)|\ga|}g_{10}\|_{L^\infty_vL^2_x}\notag\\
	&+\frac{1}{2}(1+t)^{-1+\ep}\sup_{0\leq s\leq t}\|(1+s)^{1-\ep}w_{k} g_1(s)\|_{L^\infty_vL^2_x}\notag\\
	&+C_{\ep,k}(1+t)^{-1+\ep}\notag\\
	&\ \times\left\{\sup_{0\leq s\leq t}\|(1+s)^{\frac{3}{4}-\ep}w_kg(s)\|^2_{L^\infty_{x,v}}+\sup_{0\leq s\leq t}\|(1+s)^{\frac{3}{4}-\ep}w_{k}g(s)\|^2_{L^\infty_vL^2_x}\right\}.
\end{align*}

At last, from similar arguments for deriving \eqref{I5L1}, it follows that
 \begin{align*}
	\dis \|w_{k}g_1(t)\|_{L^\infty_vL^1_x}\leq&C_\ep(1+t)^{-1+\ep}\|w_{k+(1-\ep)|\ga|}g_{10}\|_{L^\infty_vL^1_x}\notag\\
	&+\frac{1}{2}(1+t)^{-1+\ep}\sup_{0\leq s\leq t}\|(1+s)^{1-\ep}w_{k} g_1(s)\|_{L^\infty_vL^1_x}\notag\\
	&+C_{\ep,k}(1+t)^{-1+\ep}\sup_{0\leq s\leq t}\|(1+s)^{\frac{3}{4}-\ep}w_{k}g(s)\|^2_{L^\infty_vL^2_x},
\end{align*}
which indicates \eqref{sg1L1}. The proof of Lemma \ref{leg1soft} is complete.
\end{proof}

\subsection{Time decay in the symmetric case}
To estimate $g_2$, we still start with the form \eqref{reg2}, which requires us to study $U(t)$ and $\CK_b$ for $-3<\ga<0$. In this case, we first give the $L^2$ decay property of the equation \eqref{linearf} in the following proposition. Notice we need additional velocity weight on the initial data, cf. \cite[Theorem 4.1, pp.23]{DL-cmp}. Again, as for Proposition \ref{propL2}, we omit the proof for brevity; see \cite{DSa} and references therein, for instance.

\begin{proposition}\label{propL2s}
	Let $-3<\ga<0$. Let $f$ be the solution to the problem \eqref{linearf}, then it holds
	\begin{align}\label{linearL2s}
	\|f(t)\|_{L^2_{x,v}}\leq C(1+t)^{-\frac{3}{4}}(\|\nu^{-1} f_0\|_{L^2_{x,v}}+\|\nu^{-1}f_0\|_{L^2_vL^1_x}),
	\end{align}
	for any $t\geq0$.
\end{proposition}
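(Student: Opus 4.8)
The plan is to follow the classical Fourier--energy (macro--micro) method for the linearized Boltzmann equation, modified by the degeneracy of the collision frequency in the soft potential range; the full details are carried out in \cite{DSa} and the references therein, so here I only outline the scheme and indicate the main difficulty.

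First I would Fourier transform \eqref{linearf} in $x$, so that $\widehat f(t,\xi,v)$ solves, for each fixed $\xi\in\R^3$,
\begin{equation*}
\pa_t\widehat f+\rmi(v\cdot\xi)\widehat f+L\widehat f=0,\qquad \widehat f(0,\xi,v)=\widehat{f_0}(\xi,v).
\end{equation*}
Let $\mathbf P$ be the $L^2_v$-projection onto $\ker L=\mathrm{span}\{\sqrt\mu,v_1\sqrt\mu,v_2\sqrt\mu,v_3\sqrt\mu,|v|^2\sqrt\mu\}$ and recall the coercivity $\langle Lg,g\rangle_{L^2_v}\geq c_0\|\nu^{1/2}(\mathbf I-\mathbf P)g\|_{L^2_v}^2$. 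Pairing the equation with $\widehat f$ and taking real parts (the streaming term is skew) gives the basic microscopic dissipation $\tfrac12\tfrac{d}{dt}\|\widehat f(t,\xi)\|_{L^2_v}^2+c_0\|\nu^{1/2}(\mathbf I-\mathbf P)\widehat f(t,\xi)\|_{L^2_v}^2\leq0$. To capture the decay of the fluid part $\mathbf P\widehat f$ I would build a Kawashima-type interaction functional $\mathcal I(t,\xi)$ out of the moments of $\widehat f$ against $\sqrt\mu$, $v_i\sqrt\mu$, $v_iv_j\sqrt\mu$ and $v_i|v|^2\sqrt\mu$, chosen so that
\begin{equation*}
\frac{d}{dt}\,\rmre\,\mathcal I(t,\xi)+c_1\frac{|\xi|^2}{1+|\xi|^2}|\mathbf P\widehat f(t,\xi)|^2\leq C_1\|\nu^{1/2}(\mathbf I-\mathbf P)\widehat f(t,\xi)\|_{L^2_v}^2,
\end{equation*}
where the cross terms are harmless because the moments carry a Maxwellian factor and hence absorb negative powers of $\nu$. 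Forming $\mathcal E(t,\xi):=\|\widehat f(t,\xi)\|_{L^2_v}^2+\kappa(1+|\xi|^2)^{-1}\rmre\,\mathcal I(t,\xi)$ with $\kappa>0$ small yields the Lyapunov inequality
\begin{equation*}
\frac{d}{dt}\mathcal E(t,\xi)+c_2\frac{|\xi|^2}{1+|\xi|^2}\Big(\|\nu^{1/2}(\mathbf I-\mathbf P)\widehat f(t,\xi)\|_{L^2_v}^2+|\mathbf P\widehat f(t,\xi)|^2\Big)\leq0,\qquad \mathcal E(t,\xi)\sim\|\widehat f(t,\xi)\|_{L^2_v}^2.
\end{equation*}

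The main obstacle is that for $-3<\ga<0$ the dissipation above is weighted by $\nu\sim(1+|v|)^\ga$ and hence fails to control $\mathcal E$; this is exactly where the weight $\nu^{-1}$ on the data is needed. I would complement the above by a weighted energy estimate with $w:=\nu^{-1}\sim(1+|v|)^{|\ga|}$ — pairing the equation with $w^2\widehat f$ and treating the $K$-part of $L$ by a large-velocity smallness argument of the type of Lemma \ref{leKs} — which gives $\tfrac{d}{dt}\|w\widehat f(t,\xi)\|_{L^2_v}^2\leq C(\|\nu^{1/2}(\mathbf I-\mathbf P)\widehat f\|_{L^2_v}^2+|\mathbf P\widehat f|^2)$, so the weighted norm stays controlled by the running integral of the basic dissipation. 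Then for any $R>0$ the velocity splitting
\begin{equation*}
\|(\mathbf I-\mathbf P)\widehat f\|_{L^2_v}^2\leq(1+R)^{|\ga|}\|\nu^{1/2}(\mathbf I-\mathbf P)\widehat f\|_{L^2_v}^2+(1+R)^{-2|\ga|}\|w\widehat f\|_{L^2_v}^2
\end{equation*}
trades the degenerate dissipation for the full one; choosing $R=R(t)$ to grow like a small power of $t$ and running Gronwall in the Lyapunov inequality produces a pointwise-in-$\xi$ bound $\|\widehat f(t,\xi)\|_{L^2_v}^2\leq Ce^{-c|\xi|^2\phi(t)/(1+|\xi|^2)}(\|\widehat f_0(\xi)\|_{L^2_v}^2+\|\nu^{-1}\widehat f_0(\xi)\|_{L^2_v}^2)$, with $\phi(t)$ comparable to $t$ up to a sub-polynomial factor.

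Finally I would apply Plancherel and integrate over $\xi$. For $|\xi|\gtrsim1$ the resulting bound decays faster than any fixed power of $t$ and is negligible; for $|\xi|\lesssim1$ I use the Hausdorff--Young bound $\sup_\xi|\widehat{f_0}(\xi,v)|\leq\|f_0(\cdot,v)\|_{L^1_x}$ together with $\int_{|\xi|\leq1}e^{-c|\xi|^2\phi(t)}\,d\xi\leq C(1+t)^{-3/2}$, which turns the $L^2_vL^1_x$ norm of the data into the factor $(1+t)^{-3/2}$; combining with the $L^2_{x,v}$-part of the data and taking square roots gives exactly \eqref{linearL2s}. Throughout, the delicate point is the bookkeeping of the weight $\nu^{-1}$ in the weighted estimate and the velocity splitting, this being the only device that compensates for the loss of the spectral gap when $-3<\ga<0$; all these computations are standard and can be found in \cite{DSa} and references therein.
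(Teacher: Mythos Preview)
The paper itself gives no proof of this proposition: it simply writes ``we omit the proof for brevity; see \cite{DSa} and references therein.'' Your outline is precisely the Fourier--energy / Kawashima scheme carried out in those references, so at the level of strategy you are in agreement with the paper.

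One point in your sketch is too optimistic, though. With the single weight $w=\nu^{-1}$, the velocity splitting
\[
\|(\mathbf I-\mathbf P)\widehat f\|_{L^2_v}^2\leq(1+R)^{|\ga|}\|\nu^{1/2}(\mathbf I-\mathbf P)\widehat f\|_{L^2_v}^2+(1+R)^{-2|\ga|}\|\nu^{-1}\widehat f\|_{L^2_v}^2
\]
fed into the Lyapunov inequality and optimized over $R=R(t)$ does \emph{not} produce an effective rate $\phi(t)\sim t$; a direct balance of the two terms (after integrating over $|\xi|\leq1$) gives only $\|f(t)\|_{L^2_{x,v}}\lesssim(1+t)^{-3/7}$, strictly worse than $(1+t)^{-3/4}$. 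To recover the sharp rate with a single extra order of weight one needs a more careful device---for instance a time--weighted energy estimate in which $(1+t)^{3/2}\|f(t)\|_{L^2_{x,v}}^2$ is differentiated and the large--velocity error $(1+t)^{-1}\|(\mathbf I-\mathbf P)f\|_{|v|>R(t)}^2$ is made time--integrable by choosing $R(t)^{|\ga|}\sim(1+t)^{\theta}$ with $\tfrac34<\theta<1$---rather than the pointwise-in-$\xi$ Gronwall you describe. Alternatively, one can simply observe that in the only place Proposition~\ref{propL2s} is used (Lemma~\ref{leg2soft}), the data is $\CK_b g_1$, which is supported in $\{|v|<M\}$; hence any polynomial velocity weight is free and the naive splitting already yields the sharp rate. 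Either way, the phrase ``$\phi(t)$ comparable to $t$ up to a sub-polynomial factor'' hides the genuine work in the soft case.
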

With this proposition, we are able to obtain the $L^\infty_v$ decay properties of the solution to \eqref{linearf}.

\begin{lemma}\label{lem5.2}
	Let $-3<\ga<0$ and $k> 3$. Let $f$ be the solution to the problem \eqref{linearf}, then it holds
	\begin{align}
		&\|w_{k}f(t)\|_{L^\infty_{x,v}}\leq C_k(1+t)^{-\frac{3}{4}}(\|w_{k+|\ga|}f_0\|_{L^\infty_{x,v}}+\|\nu^{-1}f_0\|_{L^2_{x,v}}+\|\nu^{-1}f_0\|_{L^2_vL^1_x}), \label{slinearLinf}\\
		&\|w_{k}f(t)\|_{L^\infty_vL^2_x}\leq C_k(1+t)^{-\frac{3}{4}}(\|w_{k+|\ga|}f_0\|_{L^\infty_vL^2_x}+\|\nu^{-1}f_0\|_{L^2_vL^1_x}), \label{slinearLinfL2}
	\end{align}
	for any $t\geq0$.
\end{lemma}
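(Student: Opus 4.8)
The plan is to adapt the proof of Lemma~\ref{lem4.2} to the degenerate collision frequency $\nu(v)\sim(1+|v|)^\ga$, using the decomposition $K=K^m+K^c$ in place of the single kernel operator and replacing Proposition~\ref{propL2} by Proposition~\ref{propL2s}. Writing $L=\nu-K^m-K^c$ and using the representation \eqref{RepKc} for $K^c$, set $l_{w_k}(v,\eta)=l(v,\eta)w_k(v)/w_k(\eta)$ and $K^c_{w_k}h(v)=\int_{\R^3}l_{w_k}(v,\eta)h(\eta)\,d\eta$, so that $h:=w_kf$ solves $\pa_t h+v\cdot\na_x h+\nu h=w_kK^mf+K^c_{w_k}h$. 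Integrating along characteristics and substituting the resulting Duhamel formula once more into itself, we obtain $w_kf(t,x,v)$ as a sum of (a) the transport term $e^{-\nu(v)t}w_k(v)f_0(x-vt,v)$; (b) a single--kernel term in which $w_kK^m+K^c_{w_k}$ acts once on the transport profile; and (c) a double--kernel term in which $w_kK^m+K^c_{w_k}$ acts twice on $w_kf(s_1)$, carrying the factor $\int_0^te^{-\nu(v)(t-s)}\int_0^se^{-\nu(\eta)(s-s_1)}(\cdots)\,ds_1\,ds$, in complete analogy with \eqref{iteration}.

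For (a), since $w_k(v)/w_{k+|\ga|}(v)=(1+|v|)^{-|\ga|}\sim\nu(v)$ on large velocities (and $\nu$ is bounded below on bounded velocities), one has $e^{-\nu(v)t}w_k(v)\leq C(1+t)^{-1}w_{k+|\ga|}(v)$, so the transport term is bounded by $C(1+t)^{-1}\|w_{k+|\ga|}f_0\|_{L^\infty_{x,v}}$, which is stronger than required. The $K^m$ contributions to (b) and (c) are harmless: by \eqref{smallKm} we have $w_k(v)|K^mf(v)|\leq C_km^{\ga+3}e^{-|v|^2/20}\|w_kf\|_{L^\infty_v}$, and although $\nu(v)$ has no positive lower bound, the Gaussian $e^{-|v|^2/20}$ decays super-exponentially as $|v|\to\infty$ and therefore dominates the factor $\nu(v)^{-1}$ produced by $\nu(v)\int_0^te^{-\nu(v)(t-s)}\,ds\leq 1$; splitting the time integral at $t/2$ and applying \eqref{tdecay1} on $(t/2,t]$, every $K^m$ term is seen to be bounded by $C_km^{\ga+3}(1+t)^{-3/4}\sup_{0\leq s\leq t}\|(1+s)^{3/4}w_kf(s)\|_{L^\infty_{x,v}}$, hence absorbed once $m$ is fixed small. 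The $K^c$ part of (b) is controlled by \eqref{slowl} together with the boundedness of $\nu(v)\int_0^te^{-\nu(v)(t-s)}\,ds$ and the exponential decay of $e^{-\nu(\eta)s}$, giving $C_k(1+t)^{-3/4}\|w_{k+|\ga|}f_0\|_{L^\infty_{x,v}}$.

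The core of the proof is the $K^c$--$K^c$ part of (c), treated by the four-region splitting of Guo, exactly as in the proof of Lemma~\ref{lem4.2} and in \cite{DHWY}. In Case~1 ($|v|\geq N$) and Case~2 (large $|v-\eta|$ or $|\eta-\xi|$) one extracts a small prefactor $C_k/N$ from \eqref{decayl} (using the gain $e^{|v-\eta|^2/20}$ in Case~2); the essential point for soft potentials is that \eqref{decayl} carries a factor $\nu(v)$, so that the outer time integral $\nu(v)\int_0^te^{-\nu(v)(t-s)}(1+s)^{-3/4}\,ds$ is estimated by Lemma~\ref{ledecay}\,\eqref{decays} (with $r=3/4\neq1$) and returns the rate $(1+t)^{-3/4}$, rather than the growth $\nu(v)^{-1}=(1+|v|)^{|\ga|}$ that the bare bound $(1+|v|)^{-1}$ from \eqref{Prok} would produce. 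In Case~3 ($s-s_1\leq\la$) the short inner interval gives a prefactor $C_k\la$. In Case~4 ($|v|,|\eta|,|\xi|$ all bounded and $s-s_1\geq\la$) one approximates $l_{w_k}$ on the relevant compact set by a smooth compactly supported kernel $l_N$ with $\sup_{|v|\leq3N}\int_{|\eta|\leq3N}|l_{w_k}(v,\eta)-l_N(v,\eta)|\,d\eta\leq C_k/N$; the error again contributes $C_k/N$ times the bootstrap quantity, while for the smooth part one changes variables $y=x_1-\eta(s-s_1)$ (Jacobian $\leq\la^{-3}$), uses Cauchy--Schwarz in $(\eta,\xi)$, and invokes Proposition~\ref{propL2s} to replace $\|f(s_1)\|_{L^2_{x,v}}$ by $C(1+s_1)^{-3/4}(\|\nu^{-1}f_0\|_{L^2_{x,v}}+\|\nu^{-1}f_0\|_{L^2_vL^1_x})$; since all velocities are bounded here, $\nu$ has a positive lower bound and the remaining double time convolution is handled by two applications of \eqref{tdecay1}, yielding $(1+t)^{-3/4}$. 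Collecting the four cases and choosing $N$ large and $\la$ small (after $m$ is fixed small), the term $\sup_{0\leq s\leq t}\|(1+s)^{3/4}w_kf(s)\|_{L^\infty_{x,v}}$ is absorbed into the left-hand side, which proves \eqref{slinearLinf}. The bound \eqref{slinearLinfL2} follows by running the same computation with $\|\cdot\|_{L^2_x}$ in place of $|\cdot|$: the geometric part of the four-region analysis is $x$-independent, and in Case~4 one applies Cauchy--Schwarz in $x$ together with Proposition~\ref{propL2s}, so that only $\|w_{k+|\ga|}f_0\|_{L^\infty_vL^2_x}$ and $\|\nu^{-1}f_0\|_{L^2_vL^1_x}$ enter.

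I expect the main obstacle to be the control of the nested time convolutions $\int_0^te^{-\nu(v)(t-s)}\int_0^se^{-\nu(\eta)(s-s_1)}(1+s_1)^{-3/4}\,ds_1\,ds$ in the large-velocity regime, where $\nu$ has no positive lower bound: one must verify that the target rate $(1+t)^{-3/4}$ is preserved, which is precisely why the proof needs the refined kernel estimate \eqref{decayl} (with its $\nu(v)$ prefactor, so that Lemma~\ref{ledecay}\,\eqref{decays} applies), the smallness estimate \eqref{smallKm} for $K^m$, and the weighted-data $L^2$ decay of Proposition~\ref{propL2s} rather than its hard-potential analogue Proposition~\ref{propL2}.
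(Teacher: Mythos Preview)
Your approach is essentially the same as the paper's, and the overall architecture (the $K^m/K^c$ split, the double Duhamel iteration, the four--region analysis, and the appeal to Proposition~\ref{propL2s} in Case~4) is correct. Two quantitative points, however, do not go through as stated and need to be adjusted.

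First, for the ``$K^c$ part of (b)'' (the paper's $J_{31}$) you invoke \eqref{slowl}. That estimate only yields $\int|l_{w_k}(v,\eta)|\,d\eta\leq C(1+|v|)^{-1}$, and for $\ga\leq -1$ this is \emph{not} dominated by $\nu(v)\sim(1+|v|)^{\ga}$; hence you cannot feed the outer integral into \eqref{decays} and recover $(1+t)^{-3/4}$ uniformly in $v$. The paper instead uses \eqref{decayl}, which gives the crucial extra factor $\nu(v)$: writing $\int|l_{w_k}(v,\eta)|\,\nu(v)^{-1}\,d\eta\leq C_{m,k}(1+|v|)^{-2}\leq C_{m,k}$, one may apply \eqref{decays} to $\int_0^te^{-\nu(v)(t-s)}\nu(v)(1+s)^{-3/4}\,ds$.

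Second, in Case~4 you take the kernel approximation with error $C_k/N$ as in the hard--potential proof, but this is not enough here. On $|v|\leq N$, $|\eta|\leq 2N$ one has $\nu(v)^{-1},\nu(\eta)^{-1}\leq C(1+3N)^{-\ga}\leq CN^{3}$, so whether you use \eqref{decays} (after inserting $\nu(v)\nu(\eta)/(\nu(v)\nu(\eta))$) or \eqref{tdecay1} with $\lambda=\nu_N$, the two time integrals together contribute a factor of order $N^{6}$. With an approximation error of only $C_k/N$ the product is $O(N^{5})$ and cannot be made small. The paper fixes this by choosing $l_N$ with $\sup_{|v|\leq 3N}\int_{|\eta|\leq 3N}|l_{w_k}(v,\eta)-l_N(v,\eta)|\,d\eta\leq C_{m,k}/N^{7}$ (see \eqref{deflN}), so that $N^{6}\cdot N^{-7}=N^{-1}$ is small. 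Since the approximation is on a compact set, any power of $N^{-1}$ is available, so this is an easy repair; but as written your Case~4 error estimate does not close.
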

\begin{proof}
	Since for soft potentials we need more decay on $v$ from the operator $K$, we split $K$ into $K=K^m+K^c$ where $K^m$ is defined in \eqref{defKm}. Rewrite
	\begin{align}\label{milds}
		\dis (w_k f)(t,x,v)&=e^{-\nu(v)t} (w_k f_0)(x-vt,v)+\int_0^t e^{-\nu(v)(t-s)}(w_k K^mf)(s,x-v(t-s),v)ds \notag\\
		&\quad+\int_0^t e^{-\nu(v)(t-s)}(w_k K^cf)(s,x-v(t-s),v)ds.
	\end{align}
By \eqref{estesoft}, a direct calculation shows that
\begin{align}\label{f0s}
	|e^{-\nu(v)t} (w_k f_0)(x-vt,v)|\leq C(1+t)^{-\frac{3}{4}}\|w_{k+|\ga|}f_0\|_{L^\infty_{x,v}}.
\end{align}
Then by \eqref{smallKm} and \eqref{decays}, we have
\begin{align}\label{Kms}
	&\int_0^t e^{-\nu(v)(t-s)}\left|(w_k K^mf)(s,x-v(t-s),v)\right|ds\notag\\
	&\leq Cm^{\ga+3}\sup_{0\leq s\leq t}\|(1+s)^\frac{3}{4}w_{k}f(s)\|_{L^\infty_{x,v}}w_{k+|\ga|}(v) e^{-\frac{|v|^2}{10}} \int^t_0e^{-\nu(v)(t-s)}\nu(v)(1+s)^{-\frac{3}{4}}ds  \notag\\
	&\leq C_km^{\ga+3}(1+t)^{-\frac{3}{4}}\sup_{0\leq s\leq t}\|(1+s)^\frac{3}{4}w_{k}f(s)\|_{L^\infty_{x,v}}.
\end{align}
As in \eqref{Defkw}, we define
	\begin{align*}
	l_{w_k}(v,\eta)=l(v,\eta)\frac{w_k(v)}{w_k(\eta)}.
\end{align*}
It follows from \eqref{RepKc}, \eqref{milds}, \eqref{f0s} and \eqref{Kms} that
\begin{align}\label{milds1}
	\dis |(w_k f)(t,x,v)|&\leq C(1+t)^{-\frac{3}{4}}\|w_{k+|\ga|}f_0\|_{L^\infty_{x,v}}\notag\\
	&\qquad\quad+C_km^{\ga+3}(1+t)^{-\frac{3}{4}}\sup_{0\leq s\leq t}\|(1+s)^\frac{3}{4}w_{k}f(s)\|_{L^\infty_{x,v}} +J_3(t,x,v),
\end{align}
where
\begin{align*}
	J_3(t,x,v)&=\int_0^t e^{-\nu(v)(t-s)}\int_{\R^3}w_k(v)\big|l(v,\eta)f(s,x-v(t-s),\eta)\big|d\eta ds\notag\\
	&=\int_0^t e^{-\nu(v)(t-s)}\int_{\R^3}\big|l_{w_k}(v,\eta)w_k(\eta) f(s,x_1,\eta)\big|d\eta ds,
\end{align*}
with $x_1=x-v(t-s)$.
Again using \eqref{milds}, one gets
\begin{align}\label{J3}
	\dis J_3(t,x,v)\leq& \int^t_0 e^{-\nu(v)(t-s)}\int_{\R^3}|l_{w_k}(v,\eta)|e^{-\nu(\eta)s}|(w_k f_0)(x_1-\eta s,\eta)|d\eta ds\notag\\
	&+\int^t_0 e^{-\nu(v)(t-s)}\int_{\R^3}|l_{w_k}(v,\eta)|\int^s_0 e^{-\nu(\eta)(s-s_1)}\big|\left(w_k K^mf\right)(s_1,x_1-\eta(s-s_1),\eta)\big|ds_1d\eta ds\notag\\
	&+\int^t_0 e^{-\nu(v)(t-s)}\int_{\R^3}\int_{\R^3}|l_{w_k}(v,\eta)l_{w_k}(\eta,\xi)|\notag\\
	&\quad\times \int^s_0e^{-\nu(\eta)(s-s_1)}|(w_k f)(s_1,x_1-\eta(s-s_1),\xi)|ds_1d\eta d\xi ds\notag\\
	=&J_{31}(t,x,v)+J_{32}(t,x,v)+J_{33}(t,x,v).
\end{align}
By \eqref{estesoft}, it is straightforward to see that
\begin{align}\label{J31}
	\dis J_{31}(t,x,v)\leq& \int^t_0 e^{-\nu(v)(t-s)}\int_{\R^3}\big|l_{w_k}(v,\eta)e^{-\nu(\eta)s}(w_k f_0)(x_1-\eta s,\eta)\big|d\eta ds\notag\\
	\leq &C\int^t_0 e^{-\nu(v)(t-s)}\int_{\R^3}|l_{w_k}(v,\eta)|\frac{1}{|\nu(\eta)(1+s)|^\frac{3}{4}}|(w_k f_0)(x_1-\eta s,\eta)|d\eta ds\notag\\
	\leq &C\|w_{k+|\ga|}f_0\|_{L^\infty_{x,v}}\int^t_0 e^{-\nu(v)(t-s)}\nu(v)(1+s)^{-\frac{3}{4}}\int_{\R^3}|l_{w_k}(v,\eta)|\frac{1}{\nu(v)}d\eta ds.
\end{align}
Notice that from \eqref{decayl} we have
$$
\int_{\R^3}|l_{w_k}(v,\eta)|\frac{1}{\nu(v)}d\eta\leq C_{m,k}\frac{\nu(v)}{(1+|v|)^2}\frac{1}{\nu(v)}\leq C_{m,k},
$$
which together with \eqref{decays} and \eqref{J31}, yields
\begin{align}\label{J310}
	\dis J_{31}(t,x,v)\leq &C_{m,k}(1+t)^{-\frac{3}{4}}\|w_{k+|\ga|}f_0\|_{L^\infty_{x,v}}.
\end{align}
Using \eqref{smallKm} and similar arguments as in \eqref{J31}, we have
\begin{align}\label{J321}
	\dis J_{32}(t,x,v)\leq& \int^t_0 e^{-\nu(v)(t-s)}\int_{\R^3}|l_{w_k}(v,\eta)|\int^s_0 e^{-\nu(\eta)(s-s_1)}\big|\left(w_k K^mf\right)(s_1,x_1-\eta(s-s_1),\eta)\big|ds_1d\eta ds\notag\\
	\leq& Cm^{\ga+3}\sup_{0\leq s\leq t}\|(1+s)^\frac{3}{4}w_{k}f(s)\|_{L^\infty_{x,v}}\notag\\
	&\qquad\times\int^t_0 e^{-\nu(v)(t-s)}\int_{\R^3}|l_{w_k}(v,\eta)|\int^s_0 e^{-\nu(\eta)(s-s_1)}(1+s_1)^{-\frac{3}{4}}w_k(v)e^{-\frac{|v|^2}{10}}ds_1d\eta ds.
\end{align}
It is noted that
\begin{align}\label{J322}
&\int^t_0 e^{-\nu(v)(t-s)}\int_{\R^3}|l_{w_k}(v,\eta)|\int^s_0 e^{-\nu(\eta)(s-s_1)}(1+s_1)^{-\frac{3}{4}}w_k(v)e^{-\frac{|v|^2}{10}}ds_1d\eta ds\notag\\
&\leq C\int^t_0 e^{-\nu(v)(t-s)}\nu(v)(1+s)^{-\frac{3}{4}}\frac{w_k(v)e^{-\frac{|v|^2}{10}}}{\nu(v)^2}\int_{\R^3}|l_{w_k}(v,\eta)|\frac{\nu(v)}{\nu(\eta)} d\eta ds\notag\\
&\leq C_k(1+t)^{-\frac{3}{4}},
\end{align}
where we have used \eqref{slowl} to control $\int_{\R^3}|l_{w_k}(v,\eta)|\frac{\nu(v)}{\nu(\eta)} d\eta$ so the constant $C_k$ is independent of $m$.
Hence, we have from \eqref{J321} and \eqref{J322} that
\begin{align}\label{J32}
	\dis J_{32}(t,x,v)\leq C_km^{\ga+3}(1+t)^{-\frac{3}{4}}\sup_{0\leq s\leq t}\|(1+s)^\frac{3}{4}w_{k}f(s)\|_{L^\infty_{x,v}}.
\end{align}

For $J_{33}(t,x,v)$, we still consider it in four cases.

\noindent{\it Case 1. } $|v|\geq N$. A direct calculation shows that
\begin{align*}
	J_{33}(t,x,v)&\leq C\sup_{0\leq s\leq t}\|(1+s)^\frac{3}{4}w_{k}f(s)\|_{L^\infty_{x,v}}\notag\\
	\times&\int_0^t e^{-\nu(v)(t-s)}\int_{\R^3}\int_{\R^3}\left|l_{w_k}(v,\eta)l_{w_k}(\eta,\xi)\right|\frac{1}{\nu(\eta)}\int_0^s e^{-\nu(\eta)(s-s_1)}\nu(\eta)(1+s_1)^{-\frac{3}{4}}ds_1d\xi d\eta  ds,
\end{align*}
which, together with \eqref{decays} and \eqref{decayl}, yields
 \begin{align}\label{1J33}
 	J_{33}(t,x,v)&\leq C_{m,k}\sup_{0\leq s\leq t}\|(1+s)^\frac{3}{4}w_{k}f(s)\|_{L^\infty_{x,v}}\int_0^t e^{-\nu(v)(t-s)}\nu(v)(1+s)^{-\frac{3}{4}}\int_{\R^3}\left|l_{w_k}(v,\eta)\right|\frac{1}{\nu(v)}d\eta  ds\notag\\
 	&\leq \frac{C_{m,k}}{(1+|v|)^2}(1+t)^{-\frac{3}{4}}\sup_{0\leq s\leq t}\|(1+s)^\frac{3}{4}w_{k}f(s)\|_{L^\infty_{x,v}}\notag\\
 	&\leq \frac{C_{m,k}}{N}(1+t)^{-\frac{3}{4}}\sup_{0\leq s\leq t}\|(1+s)^\frac{3}{4}w_{k}f(s)\|_{L^\infty_{x,v}}.
 \end{align}

 \medskip
 \noindent{\it Case 2. } $|v|\leq N$, $|\eta|\geq 2N$ or $|\eta|\leq2N$, $|\xi|\geq3N$. Similar arguments as in \eqref{HC2} and \eqref{1J33} show that
  \begin{align}\label{2J33}
	J_{33}(t,x,v)&\leq \frac{C}{N}\sup_{0\leq s\leq t}\|(1+s)^\frac{3}{4}w_{k}f(s)\|_{L^\infty_{x,v}}\int_0^t e^{-\nu(v)(t-s)}\notag\\
	\times&\int_{\R^3}\int_{\R^3}\left|l_{w_k}(v,\eta)e^{\frac{|v-\eta|^2}{20}}l_{w_k}(\eta,\xi)e^{\frac{|\eta-\xi|^2}{20}}\frac{1}{\nu(\eta)}\right|\int_0^s e^{-\nu(\eta)(s-s_1)}\nu(\eta)(1+s_1)^{-\frac{3}{4}}ds_1d\xi d\eta  ds\notag\\
	&\leq \frac{C_{m,k}}{N}(1+t)^{-\frac{3}{4}}\sup_{0\leq s\leq t}\|(1+s)^\frac{3}{4}w_{k}f(s)\|_{L^\infty_{x,v}}.
 \end{align}

 \medskip
 \noindent{\it Case 3. } $s-s_1\leq\la$. Similarly, one has
 \begin{align}\label{3J33}
 J_{33}(t,x,v)&\leq C\sup_{0\leq s\leq t}\|(1+s)^\frac{3}{4}w_{k}f(s)\|_{L^\infty_{x,v}}\int_0^t e^{-\nu(v)(t-s)}\notag\\
 	&\qquad\qquad\times\int_{\R^3}\int_{\R^3}\left|l_{w_k}(v,\eta)l_{w_k}(\eta,\xi)\frac{1}{\nu(\eta)}\right|\int_0^s e^{-\nu(\eta)(s-s_1)}\nu(\eta)(1+s_1)^{-\frac{3}{4}}ds_1d\xi d\eta  ds\notag\\
 	&\leq C_{m,k}\la(1+t)^{-\frac{3}{4}}\sup_{0\leq s\leq t}\|(1+s)^\frac{3}{4}w_{k}f(s)\|_{L^\infty_{x,v}}.
 \end{align}

\medskip
\noindent{\it Case 4. } $|v|\leq N$, $|\eta|\leq2N$, $|\xi|\leq3N$, $s-s_1\geq\la$.
We choose a smooth function $l_N$ with compact support such that
\begin{align}\label{deflN}
	\sup_{|v|\leq3N}\int_{|\eta|\leq3N}\left| l_{w_k}(v,\eta)-l_N(v,\eta) \right|d\eta \leq \frac{C_{m,k}}{N^7}.
\end{align}
Then we have
\begin{equation}
\label{4J33}
\dis J_{33}(t,x,v)\leq  J_{331}+J_{332}+J_{333},
\end{equation}
with
\begin{align*}
	J_{331}&:= C\sup_{0\leq s\leq t}\|(1+s)^\frac{3}{4}w_{k}f(s)\|_{L^\infty_{x,v}}\int^t_0 e^{-\nu(v)(t-s)}\notag\\
	&\qquad\times \int_{\R^3}\int_{\R^3}\left|l_{w_k}(v,\eta)-l_{N}(v,\eta)\right|\left|l_{w_k}(\eta,\xi)\right|\int^{s-\la}_0e^{-\nu(\eta)(s-s_1)}(1+s_1)^{-\frac{3}{4}}ds_1d\eta d\xi ds,
\end{align*}
\begin{align*}
	J_{332}&:=C\sup_{0\leq s\leq t}\|(1+s)^\frac{3}{4}w_{k}f(s)\|_{L^\infty_{x,v}}\int^t_0 e^{-\nu(v)(t-s)}\notag\\
	&\qquad\times \int_{\R^3}\int_{\R^3}\left|l_{N}(v,\eta)\right|\left|l_{w_k}(\eta,\xi)-l_{N}(\eta,\xi)\right|\int^{s-\la}_0e^{-\nu(\eta)(s-s_1)}(1+s_1)^{-\frac{3}{4}}ds_1d\eta d\xi ds,
\end{align*}
and
\begin{align*}
	J_{333}:=&\int^t_0 e^{-\nu(v)(t-s)}\iint_{|\eta|\leq2N,|\xi|\leq3N}\left|l_{N}(v,\eta)l_{N}(\eta,\xi)\right|\notag\\
	&\quad\times \int^{s-\la}_0e^{-\nu(\eta)(s-s_1)}\left|(w_k f)(s_1,x_1-\eta(s-s_1),\xi)\right|ds_1d\eta d\xi  ds.
\end{align*}
For $J_{331}$, noticing that $\frac{1}{\nu(v)}\leq C(1+|v|)^{-\ga}\leq CN^3$ and $\frac{1}{\nu(\eta)}\leq CN^3$, by \eqref{decays} and \eqref{deflN}, we have
\begin{align}\label{J331}
	J_{331}&\leq CN^6\sup_{0\leq s\leq t}\|(1+s)^\frac{3}{4}w_{k}f(s)\|_{L^\infty_{x,v}}\int^t_0 e^{-\nu(v)(t-s)}\nu(v)\notag\\
	&\quad\times \int_{\R^3}\int_{\R^3}\left|l_{w_k}(v,\eta)-l_{N}(v,\eta)\right|\left|l_{w_k}(\eta,\xi)\right|\int^{s-\la}_0e^{-\nu(\eta)(s-s_1)}\nu(\eta)(1+s_1)^{-\frac{3}{4}}ds_1d\eta d\xi ds\notag\\
	&\leq \frac{C_{m,k}}{N}(1+t)^{-\frac{3}{4}}\sup_{0\leq s\leq t}\|(1+s)^\frac{3}{4}w_{k}f(s)\|_{L^\infty_{x,v}}.
\end{align}
A similar calculation yields
 \begin{align}\label{J332}
 	J_{332}&\leq \frac{C_{m,k}}{N}(1+t)^{-\frac{3}{4}}\sup_{0\leq s\leq t}\|(1+s)^\frac{3}{4}w_{k}f(s)\|_{L^\infty_{x,v}}.
 \end{align}
Using the arguments in \eqref{I611} and \eqref{I61} and denoting
\begin{align}\label{defnuN}
\nu_N=\inf_{|v|\leq 3N}|\nu(v)|>0,
\end{align}
 it follows from \eqref{defnuN}, \eqref{tdecay1} and \eqref{linearL2s} that
 \begin{align}\label{J333}
	\dis J_{333}(t,x,v)& \leq C_{m,k,N}\int^t_0e^{-\nu_N(t-s)}\int^{s-\la}_0e^{-\nu_N(s-s_1)}\notag\\
	&\qquad\quad\times\left(\iint_{|\eta|\leq2N,|\xi|\leq3N}\left|f(s_1,x_1-\eta(s-s_1),\xi)\right|^2d\eta d\xi\right)^\frac{1}{2}  ds_1ds\notag\\
	&\leq C_{m,k,N,\la}\int^t_0e^{-\nu_N(t-s)}\int^{s-\la}_0e^{-\nu_N(s-s_1)}\left(\int_{\R^3}\int_{\R^3}\left|f(s_1,y,\xi)\right|^2dy d\xi\right)^\frac{1}{2}  ds_1ds\notag\\
	&\leq C_{m,k,N,\la}(1+t)^{-\frac{3}{4}}(\|\nu^{-1}f_0\|_{L^2_{x,v}}+\|\nu^{-1}f_0\|_{L^2_vL^1_x}).
\end{align}
For Case $4$, we have from \eqref{4J33}, \eqref{J331}, \eqref{J332} and \eqref{J333} that
 \begin{align}\label{4J330}
	J_{33}(t,x,v)&\leq \frac{C_{m,k}}{N}(1+t)^{-\frac{3}{4}}\sup_{0\leq s\leq t}\|(1+s)^\frac{3}{4}w_{k}f(s)\|_{L^\infty_{x,v}}\notag\\
	&\qquad\qquad+C_{m,k,N,\la}(1+t)^{-\frac{3}{4}}(\|\nu^{-1}f_0\|_{L^2_{x,v}}+\|\nu^{-1}f_0\|_{L^2_vL^1_x}).
\end{align}

Combining the four cases \eqref{1J33}, \eqref{2J33}, \eqref{3J33} and \eqref{4J330}, we obtain
 \begin{align}\label{J33}
	J_{33}(t,x,v)&\leq C_{m,k}\big(\frac{1}{N}+\la\big)(1+t)^{-\frac{3}{4}}\sup_{0\leq s\leq t}\|(1+s)^\frac{3}{4}w_{k}f(s)\|_{L^\infty_{x,v}}\notag\\
	&\qquad+C_{m,k,N,\la}(1+t)^{-\frac{3}{4}}(\|\nu^{-1}f_0\|_{L^2_{x,v}}+\|\nu^{-1}f_0\|_{L^2_vL^1_x}).
\end{align}

We collect \eqref{milds1}, \eqref{J3}, \eqref{J310}, \eqref{J32} and \eqref{J33} to get
\begin{align*}
	\dis |(w_k f)(t,x,v)|&\leq C_{m,k}(1+t)^{-\frac{3}{4}}\|w_{k+|\ga|}f_0\|_{L^\infty_{x,v}}+C_km^{\ga+3}(1+t)^{-\frac{3}{4}}\sup_{0\leq s\leq t}\|(1+s)^\frac{3}{4}w_{k}f(s)\|_{L^\infty_{x,v}} \notag\\
	&\qquad+C_{m,k}\big(\frac{1}{N}+\la\big)(1+t)^{-\frac{3}{4}}\sup_{0\leq s\leq t}\|(1+s)^\frac{3}{4}w_{k}f(s)\|_{L^\infty_{x,v}}\notag\\
	&\qquad+C_{m,k,N,\la}(1+t)^{-\frac{3}{4}}(\|\nu^{-1}f_0\|_{L^2_{x,v}}+\|\nu^{-1}f_0\|_{L^2_vL^1_x}).
\end{align*}
Hence, \eqref{slinearLinf} follows by first choosing small $m$ and then choosing small $\la$ and large $N$.

We can prove \eqref{slinearLinfL2} in the similar way. The case that $|v|\geq N$ or $|v|\leq N$, $|\eta|\geq 2N$ or $|\eta|\leq2N$, $|\xi|\geq3N$ or $s-s_1\leq\la$ can be estimated as the first three cases above. Then using the approximation function $l_N$ which is defined in \eqref{deflN}, we deduce that
\begin{align*}
	\|(w_{k}f)(t,v)\|_{L^2_x}&\leq C_{m,k}(1+t)^{-\frac{3}{4}}\|w_{k+|\ga|}f_0\|_{L^\infty_vL^2_x}+C_km^{\ga+3}(1+t)^{-\frac{3}{4}}\sup_{0\leq s\leq t}\|(1+s)^\frac{3}{4}w_{k}f(s)\|_{L^\infty_vL^2_x} \notag\\
	&\quad+C_{m,k}\big(\frac{1}{N}+\la\big)(1+t)^{-\frac{3}{4}}\sup_{0\leq s\leq t}\|(1+s)^\frac{3}{4}w_{k}f(s)\|_{L^\infty_vL^2_x}\notag\\
	&\quad+C_{m,k,N}\int^t_0 e^{-\nu_N(t-s)}\notag\\
	&\quad\quad\times\iint_{|\eta|\leq2N,|\xi|\leq3N} \int^{s-\la}_0e^{-\nu_N(s-s_1)}\big(\int_{\R^3}\left|f(s_1,y,\xi)\right|^2dy\big)^\frac{1}{2}ds_1d\eta d\xi  ds.
\end{align*}
Applying Cauchy-Schwarz's inequality, \eqref{tdecay1} and \eqref{linearL2s}, similar calculation as in \eqref{HC221} shows that
\begin{align*}
	\|(w_{k}f)(t,v)\|_{L^2_x}&\leq C_{m,k}(1+t)^{-\frac{3}{4}}\|w_{k+|\ga|}f_0\|_{L^\infty_vL^2_x}+C_km^{\ga+3}(1+t)^{-\frac{3}{4}}\sup_{0\leq s\leq t}\|(1+s)^\frac{3}{4}w_{k}f(s)\|_{L^\infty_vL^2_x} \notag\\
	&\quad+C_{m,k}\big(\frac{1}{N}+\la\big)(1+t)^{-\frac{3}{4}}\sup_{0\leq s\leq t}\|(1+s)^\frac{3}{4}w_{k}f(s)\|_{L^\infty_vL^2_x}\notag\\
	&\quad+C_{m,k,N,\la}(1+t)^{-\frac{3}{4}}(\|\nu^{-1}f_0\|_{L^2_{x,v}}+\|\nu^{-1}f_0\|_{L^2_vL^1_x}),
\end{align*}
which yields \eqref{slinearLinfL2} by choosing small $m$, large $N$ and small $\la$. The proof of Lemma \ref{lem5.2} is complete.
\end{proof}

After establishing the decay estimates for the symmetric case, we need to study $\CK_b$. Notice that \eqref{Kb2} and \eqref{controlK} hold for $-3<\ga<0$ and $k>3$. Since the constant $M=M(\ep,k)$ chosen in \eqref{Mep} depends on $\ep$ and $k$, we may change the constant $C_k$ into $C_{\ep,k}$. Therefore, we directly have the following lemma.

\begin{lemma}%\label{leKbs}
	Let $-3<\ga<0$ and $k>3$. Let the operator $\CK_b$ be defined in \eqref{defKb} with the constant $M=M(\ep,k)$ chosen in \eqref{Mep}. Then, for any  $1\leq p\leq \infty$ and $j\geq 0$, it holds that
	\begin{align}
		&\|w_{j} \CK_bf\|_{L^\infty_vL^p_x}\leq C_{\ep,j,k}\|w_{k}f\|_{L^\infty_vL^p_x}\label{Kblps},	
	\end{align}
where $C_{\ep,j,k}$ depends only on $\ep$, $j$ and $k$.
\end{lemma}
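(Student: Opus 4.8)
The plan is to repeat, essentially verbatim, the argument of Lemma \ref{leKb}, the only change being bookkeeping: the cutoff radius $M=M(\ep,k)$ now also depends on $\ep$ via \eqref{Mep}, so every constant previously denoted $C_k$ or $C_{j,k}$ is relabeled $C_{\ep,k}$ or $C_{\ep,j,k}$, and no new analytic input is needed. Concretely, I would start from the definitions \eqref{defKb} and \eqref{DefCK}, writing
\[
\CK_b f(t,x,v)=\chi_{\{|v|<M\}}\mu^{-1/2}(v)\int_{\R^3}\int_{\S^2}B(v-u,\theta)\big[\mu(u')f(t,x,v')+\mu(v')f(t,x,u')-\mu(v)f(t,x,u)\big]\,d\omega\,du,
\]
and note that on the bounded set $\{|v|<M\}$ the weight $w_j(v)\mu^{-1/2}(v)$ is bounded by a constant times $w_k(v)$, the constant depending only on $M$ and $j$, hence on $\ep,j,k$ since $M=M(\ep,k)$.

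Next I would take the $L^p_x$ norm in $x$ and use Minkowski's integral inequality to move it inside the $d\omega\,du$ integral, together with $\|f(t,\cdot,v')\|_{L^p_x}\le w_k(v')^{-1}\|w_kf\|_{L^\infty_vL^p_x}$ and the analogous bounds for the $u'$ and $u$ terms. This reduces the claim to the pointwise-in-$v$ velocity estimate
\[
\chi_{\{|v|<M\}}\int_{\R^3}\int_{\S^2}|v-u|^\ga\,w_k(v)\Big(\frac{\mu(u')}{w_k(v')}+\frac{\mu(v')}{w_k(u')}+\frac{\mu(v)}{w_k(u)}\Big)\,d\omega\,du\le C_{\ep,k}.
\]
For the first term I invoke \eqref{controlK} of Lemma \ref{leQ}, which is valid for the whole range $-3<\ga\le1$ and in particular for soft potentials, giving a bound $C\,k^{-(\ga+3)/4}\nu(v)+C_k\nu(v)(1+|v|)^{-2}\le C_k\nu(v)$; the second term is handled identically after interchanging $v'$ and $u'$ by the rotation argument (cf.~\cite{Gl}); and for the third term one uses the elementary estimate $\int_{\R^3}|v-u|^\ga w_k(v)\mu(v)w_k(u)^{-1}\,du\le C_k$, which holds for $k>3$. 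Since $\nu(v)=(1+|v|)^\ga$ is uniformly bounded when $\ga<0$, the factor $\nu(v)$ on the right is harmless, and collecting the three terms yields \eqref{Kblps}.

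I do not expect a genuine obstacle: this lemma is merely the soft-potential analogue of Lemma \ref{leKb}, and the content is purely bookkeeping. The only two points to be careful about are (i) tracking that the constants now inherit an $\ep$-dependence coming solely from the choice \eqref{Mep} of $M$, and (ii) using \eqref{controlK}—rather than the refined bound \eqref{controlQ2}, which is restricted to hard potentials—as the available inequality in the soft regime. If anything, the uniform boundedness of $\nu$ for $\ga<0$ makes the estimate slightly simpler than in the hard case.
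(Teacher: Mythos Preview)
Your proposal is correct and follows essentially the same approach as the paper: the paper simply remarks that the computation \eqref{Kb2} from Lemma \ref{leKb} and the bound \eqref{controlK} remain valid for $-3<\ga<0$ and $k>3$, and that the only change is the $\ep$-dependence of $M=M(\ep,k)$, which propagates into the constants. Your write-up is in fact more detailed than the paper's one-sentence justification, and your observations (i) and (ii) are exactly the points the paper implicitly relies on.
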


\subsection{Estimate on $g_2$}
With these lemmas,  we  estimate $g_2$ as follows.

\begin{lemma}\label{leg2soft}
	Let $-3<\ga<0$ and $0<\ep\leq\frac{1}{2}$. There is $k_0>0$ large enough such that for any $k\geq k_0$, there is a constant $M>0$ for the decomposition $\CK=\CK_s+\sqrt{\mu}\CK_b$  such that the following estimates hold:
\begin{align}
		\|w_{k}g_2(t)\|_{L^\infty_{x,v}}&\leq C_{\ep,k}(1+t)^{-\frac{3}{4}+\ep}\left\{\|w_{k+|\ga|}g_{10}\|_{L^\infty_{x,v}}+\|w_{k+|\ga|}g_{10}\|_{L^\infty_vL^2_x}+\|w_{k+|\ga|}g_{10}\|_{L^\infty_vL^1_x}\right\}\notag\\
			&\quad+C_{\ep,k}(1+t)^{-\frac{3}{4}+\ep}\left\{\sup_{0\leq s\leq t}\|(1+s)^{\frac{3}{4}-\ep}w_kg(s)\|^2_{L^\infty_{x,v}}\right.\notag\\
			&\qquad\qquad\qquad\qquad\qquad\qquad\left.+\sup_{0\leq s\leq t}\|(1+s)^{\frac{3}{4}-\ep}w_{k}g(s)\|^2_{L^\infty_vL^2_x}\right\},
			\label{g2Linfs}
	\end{align}
	and
	\begin{align}
		\|w_{k}g_2(t)\|_{L^\infty_vL^2_x}&\leq C_{\ep,k}(1+t)^{-\frac{3}{4}+\ep}\left\{\|w_{k+|\ga|}g_{10}\|_{L^\infty_vL^2_x}+\|w_{k+|\ga|}g_{10}\|_{L^\infty_vL^1_x}\right\}\notag\\
		&\quad+C_{\ep,k}(1+t)^{-\frac{3}{4}+\ep}\left\{\sup_{0\leq s\leq t}\|(1+s)^{\frac{3}{4}-\ep}w_kg(s)\|^2_{L^\infty_{x,v}}\right.\notag\\
		&\qquad\qquad\qquad\qquad\qquad\qquad\left.+\sup_{0\leq s\leq t}\|(1+s)^{\frac{3}{4}-\ep}w_{k}g(s)\|^2_{L^\infty_vL^2_x}\right\}.\label{g2LinfL2s}
	\end{align}
\end{lemma}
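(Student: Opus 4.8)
The plan is to run the $g_1$ bounds of Lemma \ref{leg1soft} through the Duhamel formula \eqref{reg2}, using the linear decay estimates of Lemma \ref{lem5.2} together with the boundedness of $\CK_b$ from \eqref{Kblps}, and then to close the estimate with the convolution inequality \eqref{tdeday}. Starting from $g_2(t)=\int_0^t U(t-s)\CK_b g_1(s)\,ds$ and applying \eqref{slinearLinf} with $\CK_b g_1(s)$ playing the role of the initial datum, one gets
\begin{align*}
\|w_{k}g_2(t)\|_{L^\infty_{x,v}}\leq C_{k}\int_0^t (1+t-s)^{-\frac34}\Big(&\|w_{k+|\ga|}\CK_b g_1(s)\|_{L^\infty_{x,v}}+\|\nu^{-1}\CK_b g_1(s)\|_{L^2_{x,v}}\\
&+\|\nu^{-1}\CK_b g_1(s)\|_{L^2_vL^1_x}\Big)\,ds,
\end{align*}
and similarly for $\|w_{k}g_2(t)\|_{L^\infty_vL^2_x}$ via \eqref{slinearLinfL2}, which involves only the $L^\infty_vL^2_x$ and $L^2_vL^1_x$ norms of the datum.

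Next I would eliminate the auxiliary norms of $\CK_b g_1$. Since $\CK_b f=\chi_{\{|v|<M\}}\mu^{-1/2}\CK f$ carries the cutoff $\chi_{\{|v|<M\}}$ with $M=M(\ep,k)$ fixed in \eqref{Mep}, on its support both the weight $w_{k+|\ga|}$ and the factor $\nu^{-1}\sim(1+|v|)^{|\ga|}$ are bounded by constants depending only on $\ep$ and $k$; combining this with \eqref{Kblps} and the elementary embeddings $\|h\|_{L^2_{x,v}}\le C\|w_{k}h\|_{L^\infty_vL^2_x}$, $\|h\|_{L^2_vL^1_x}\le C\|w_{k}h\|_{L^\infty_vL^1_x}$ (valid since $k\ge k_0>\tfrac32$) bounds the integrand above by $C_{\ep,k}\big(\|w_{k}g_1(s)\|_{L^\infty_{x,v}}+\|w_{k}g_1(s)\|_{L^\infty_vL^2_x}+\|w_{k}g_1(s)\|_{L^\infty_vL^1_x}\big)$. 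Inserting \eqref{sg1infty}, \eqref{sg1L2} and \eqref{sg1L1}, each of these three norms is at most $C_\ep(1+s)^{-1+\ep}\|w_{k+|\ga|}g_{10}\|_{\bullet}$ plus $C_{\ep,k}(1+s)^{-1+\ep}$ times the two squared time-weighted suprema of $w_kg$; pulling those monotone suprema out of the time integral leaves the single scalar integral $\int_0^t (1+t-s)^{-3/4}(1+s)^{-1+\ep}\,ds$ multiplying the data and the quadratic quantities.

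Finally, since $0<\ep\le\tfrac12$ one has $r:=1-\ep<1$, so \eqref{tdeday} with $q=\tfrac34$ (or, when $\tfrac34>1-\ep$, the same splitting of $[0,t]$ into $[0,t/2]\cup(t/2,t]$ that underlies it) gives $\int_0^t (1+t-s)^{-3/4}(1+s)^{-1+\ep}\,ds\le C_\ep(1+t)^{1-q-r}=C_\ep(1+t)^{-\frac34+\ep}$, which is exactly the prefactor in \eqref{g2Linfs}. Collecting everything yields \eqref{g2Linfs}, and \eqref{g2LinfL2s} follows identically, starting from \eqref{slinearLinfL2} and using only \eqref{sg1L2} and \eqref{sg1L1}.

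The point demanding care is the interaction between the two weight losses and the decay index. The linear estimates \eqref{slinearLinf}--\eqref{slinearLinfL2} cost an extra $w_{|\ga|}$ on the source and an extra $\nu^{-1}\sim(1+|v|)^{|\ga|}$ in the $L^2$ norms; these would be fatal for the bare operator $\CK$ but are harmless for $\CK_b$ precisely because the decomposition confines it to the bounded set $\{|v|<M\}$, while the price for the $w_{k+|\ga|}$ weight has already been absorbed into the $g_1$ bounds of Lemma \ref{leg1soft}, which is exactly why \eqref{g2Linfs} is stated with $w_{k+|\ga|}g_{10}$ rather than $w_{k}g_{10}$. The second delicate point, already built into Lemma \ref{leg1soft}, is that $g_1$ is only known to decay like $(1+s)^{-1+\ep}$: this is the borderline‑avoiding rate that places the convolution with the heat‑type kernel $(1+t-s)^{-3/4}$ into the $r<1$ regime of \eqref{tdeday}, producing the clean power $(1+t)^{-3/4+\ep}$; at the endpoint rate $r=1$ one would instead pick up the logarithmic correction $\log(1+t)$, and the loss $\ep$ in the final exponent is precisely the price paid to avoid it.
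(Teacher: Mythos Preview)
Your proposal is correct and follows essentially the same route as the paper: Duhamel via \eqref{reg2}, the linear decay estimates \eqref{slinearLinf}--\eqref{slinearLinfL2} applied to $\CK_b g_1(s)$, reduction of all norms of $\CK_b g_1$ to $\|w_k g_1\|$ norms, insertion of Lemma \ref{leg1soft}, and closure by \eqref{tdeday}. The only cosmetic difference is that the paper handles the extra weights $w_{k+|\ga|}$ and $\nu^{-1}$ on $\CK_b g_1$ by first invoking the embedding $\|w_{|\ga|}h\|_{L^2_v}\le C\|w_{|\ga|+3}h\|_{L^\infty_v}$ and then applying \eqref{Kblps} with the larger index $j=k+|\ga|$ (respectively $j=|\ga|+3$), whereas you absorb those weights directly using the compact velocity support $\{|v|<M\}$ of $\CK_b$; both arguments are equivalent and yield the same $C_{\ep,k}$ constants.
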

\begin{proof}
		By \eqref{reg2} and \eqref{slinearLinf}, one has
	\begin{align*}
		|(w_{k}g_2)(t,x,v)|&\leq\int_0^t\|w_kU(t-s)(\CK_b g_1)(s)\|_{L^\infty_{x,v}}ds\notag\\
		&\leq C_k\int_0^t(1+t-s)^{-\frac{3}{4}}\notag\\
		&\quad\qquad\times(\|w_{k+|\ga|}\CK_b g_1(s)\|_{L^\infty_{x,v}}+\|w_{|\ga|}\CK_b g_1(s)\|_{L^2_{x,v}}+\|w_{|\ga|}\CK_b g_1(s)\|_{L^2_vL^1_x})ds.
	\end{align*}
By the fact that $\|w_kf\|_{L^2}\leq C\|w_{k+3}f\|_{L^\infty}$, it holds that
	\begin{align*}
	|(w_{k}g_2)(t,x,v)|&\leq C_{k}\int_0^t(1+t-s)^{-\frac{3}{4}}(\|w_{k+|\ga|}\CK_b g_1(s)\|_{L^\infty_{x,v}}\notag\\
	&\quad\qquad\qquad\qquad\qquad\qquad+\|w_{|\ga|+3}\CK_b g_1(s)\|_{L^\infty_vL^2_x}+\|w_{|\ga|+3}\CK_b g_1(s)\|_{L^\infty_vL^1_x})ds.
\end{align*}
We apply \eqref{Kblps} to obtain
	\begin{align}\label{g2soft1}
	|(w_{k}g_2)(t,x,v)|&\leq C_{k}\int_0^t(1+t-s)^{-\frac{3}{4}}(\|w_{k}g_1(s)\|_{L^\infty_{x,v}}+\|w_{k} g_1(s)\|_{L^\infty_vL^2_x}+\|w_{k} g_1(s)\|_{L^\infty_vL^1_x})ds.
\end{align}
Then it follows from \eqref{g2soft1}, \eqref{sg1infty}, \eqref{sg1L2} and \eqref{sg1L1} that
\begin{align*}
	|(w_{k}g_2)(t,x,v)|&\leq C_{\ep,k}\int_0^t(1+t-s)^{-\frac{3}{4}}(1+s)^{-1+\ep}\notag\\
	&\qquad\qquad\times \big(\|w_{k+|\ga|}g_{10}\|_{L^\infty_{x,v}}+\|w_{k+|\ga|} g_{10}\|_{L^\infty_vL^2_x}+\|w_{k+|\ga|}g_{10}\|_{L^\infty_vL^1_x}\big)ds\notag\\
	&\quad+C_{\ep,k}\int_0^t(1+t-s)^{-\frac{3}{4}}(1+s)^{-1+\ep}\notag\\
	&\qquad\quad\times \left\{\sup_{0\leq s\leq t}\|(1+s)^{\frac{3}{4}-\ep}w_kg(s)\|^2_{L^\infty_{x,v}}+\sup_{0\leq s\leq t}\|(1+s)^{\frac{3}{4}-\ep}w_{k}g(s)\|^2_{L^\infty_vL^2_x}\right\}ds,
\end{align*}
which yields \eqref{g2Linfs} by \eqref{tdeday}.

Similarly, by \eqref{Kblps}, \eqref{slinearLinfL2}, \eqref{sg1L2} and \eqref{sg1L1}, one has
\begin{align*}
	\|(w_{k}g_2)(t)\|_{L^\infty_vL^2_x}&\leq\int_0^t\|w_kU(t-s)(\CK_b g_1)(s)\|_{L^\infty_vL^2_x}ds\notag\\
	&\leq C_k\int_0^t(1+t-s)^{-\frac{3}{4}}\|(w_{k+|\ga|}\CK_b g_1(s)\|_{L^\infty_vL^2_x}+\|w_{|\ga|}\CK_b g_1(s)\|_{L^2_vL^1_x})ds\notag\\
	&\leq C_{\ep,k}\int_0^t(1+t-s)^{-\frac{3}{4}}(1+s)^{-1+\ep}\big(\|w_{k+|\ga|} g_{10}\|_{L^\infty_vL^2_x}+\|w_{k+|\ga|}g_{10}\|_{L^\infty_vL^1_x}\big)ds\notag\\
	&\quad+C_{\ep,k}\int_0^t(1+t-s)^{-\frac{3}{4}}(1+s)^{-1+\ep}\notag\\
	&\qquad\quad\times \left\{\sup_{0\leq s\leq t}\|(1+s)^{\frac{3}{4}-\ep}w_kg(s)\|^2_{L^\infty_{x,v}}+\sup_{0\leq s\leq t}\|(1+s)^{\frac{3}{4}-\ep}w_{k}g(s)\|^2_{L^\infty_vL^2_x}\right\}ds.
\end{align*}
Hence, \eqref{g2LinfL2s} follows from the above estimate and \eqref{tdeday}. The proof of Lemma \ref{leg2soft} is complete.
\end{proof}

\subsection{Proof of Theorem \ref{soft}}
Now we can prove Theorem \ref{soft}.
\begin{proof}[Proof of Theorem \ref{soft}]
	Recall that $g=g_1+\sqrt{\mu}g_2$. For $k\geq j\geq k_0$, a direct calculation shows that
	\begin{align*}
		\|w_kg(t)\|_{L^\infty_{x,v}}\leq \|w_kg_1(t)\|_{L^\infty_{x,v}}+\|w_{k}\sqrt{\mu}g_2(t)\|_{L^\infty_{x,v}}\leq \|w_kg_1(t)\|_{L^\infty_{x,v}}+C_{j,k}\|w_{j}g_2(t)\|_{L^\infty_{x,v}}.
	\end{align*}
	By \eqref{sg1infty}, \eqref{g2Linfs} and the condition $j\leq k$, it holds that
	\begin{align}\label{gLinfs}
		\|w_kg(t)\|_{L^\infty_{x,v}}&\leq C_{\ep,j,k}(1+t)^{-\frac{3}{4}+\ep}\left\{\|w_{k+|\ga|}g_{10}\|_{L^\infty_{x,v}}+\|w_{j+|\ga|}g_{10}\|_{L^\infty_vL^2_x}+\|w_{j+|\ga|}g_{10}\|_{L^\infty_vL^1_x}\right\}\notag\\
		+C_{\ep,j,k}&(1+t)^{-\frac{3}{4}+\ep}\left\{\sup_{0\leq s\leq t}\|(1+s)^{\frac{3}{4}-\ep}w_kg(s)\|^2_{L^\infty_{x,v}}+\sup_{0\leq s\leq t}\|(1+s)^{\frac{3}{4}-\ep}w_{j}g(s)\|^2_{L^\infty_vL^2_x}\right\}.
	\end{align}
	Also we have
	\begin{align*}
		\|w_{j}g(t)\|_{L^\infty_vL^2_x}\leq \|w_{j}g_1(t)\|_{L^\infty_vL^2_x}+\|w_{j}\sqrt{\mu}g_2(t)\|_{L^\infty_vL^2_x}\leq \|w_{j}g_1(t)\|_{L^\infty_vL^2_x}+\|w_{j}g_2(t)\|_{L^\infty_vL^2_x}.
	\end{align*}
 Then it follows from \eqref{sg1L2}, \eqref{g2LinfL2s} and the condition $j\leq k$ that
	\begin{align}\label{gLinfL2s}
		\|w_{j}g(t)\|_{L^\infty_vL^2_x}&\leq C_{\ep,j}(1+t)^{-\frac{3}{4}+\ep}\left\{\|w_{j+|\ga|}g_{10}\|_{L^\infty_vL^2_x}+\|w_{j+|\ga|}g_{10}\|_{L^\infty_vL^1_x}\right\}\notag\\
		+C_{\ep,j}&(1+t)^{-\frac{3}{4}+\ep}\left\{\sup_{0\leq s\leq t}\|(1+s)^{\frac{3}{4}-\ep}w_kg(s)\|^2_{L^\infty_{x,v}}+\sup_{0\leq s\leq t}\|(1+s)^{\frac{3}{4}-\ep}w_{j}g(s)\|^2_{L^\infty_vL^2_x}\right\}.
	\end{align}
	Combining \eqref{gLinfs} and \eqref{gLinfL2s}, we obtain
	\begin{align}\label{gsoft}
		&\sup_{0\leq s\leq t}\|(1+s)^{\frac{3}{4}-\ep}w_kg(s)\|_{L^\infty_{x,v}}+\sup_{0\leq s\leq t}\|(1+s)^{\frac{3}{4}-\ep}w_{j}g(s)\|_{L^\infty_vL^2_x}\notag\\
		&\leq C_{\ep,j,k}\left\{\|w_{k+|\ga|}g_{10}\|_{L^\infty_{x,v}}+\|w_{j+|\ga|}g_{10}\|_{L^\infty_vL^2_x}+\|w_{j+|\ga|}g_{10}\|_{L^\infty_vL^1_x}\right\}\notag\\
		&\qquad+C_{\ep,j,k}\left\{\sup_{0\leq s\leq t}\|(1+s)^{\frac{3}{4}-\ep}w_kg(s)\|^2_{L^\infty_{x,v}}+\sup_{0\leq s\leq t}\|(1+s)^{\frac{3}{4}-\ep}w_{j}g(s)\|^2_{L^\infty_vL^2_x}\right\}.
	\end{align}
	Recall our definitions of $\|\cdot\|_{X_{j,k}}$ in \eqref{defX} and $\|\cdot\|_{Y_{j,k}}$ in \eqref{defY}. By the local-in-time existence together with the continuity argument,  from \eqref{gsoft}, \eqref{SE} follows by \eqref{smallnesssoft} for a small constant $\ep_0$ which depends on $\ep$, $j$ and $k$. Hence, the global solution is established and the proof of Theorem \ref{soft} is complete.
\end{proof}

\medskip
\noindent {\bf Acknowledgments:}\, The authors thank an anonymous referee for very valuable and helpful comments on the manuscript.  RJD was partially supported by the General Research Fund (Project No.~14303321) from RGC of Hong Kong and a Direct Grant from CUHK. SQL was supported by grants from the National Natural Science Foundation of China (contract: 12325107). This work was also partially supported by the Fundamental Research Funds for the Central Universities.

\medskip

\noindent{\bf Conflict of Interest:} The authors declare that they have no conflict of interest.

%\newpage

\end{document}